\newcommand\scalemath[2]{\scalebox{#1}{\mbox{\ensuremath{\displaystyle #2}}}}
\def\circledarrow#1#2#3{ 
\draw[#1,-] (#2) +(157:#3) arc(157:-157:#3);
}
\definecolor{linkred}{rgb}{0.75,0,0}
\definecolor{linkblue}{rgb}{0,0,0.75}
\theoremstyle{plain}
\newtheorem{theorem}{Theorem}
\newtheorem{proposition}{Proposition}[section]
\newtheorem{lemma}[proposition]{Lemma}
\newtheorem{conjecture}{Conjecture}
\newtheorem{corollary}[proposition]{Corollary}
\newtheorem{example}[proposition]{Example}
\theoremstyle{definition}
\newtheorem{definition}[proposition]{Definition}
\newtheorem{remark}[proposition]{Remark}
\newcommand{\beq}{\begin{equation}}
\newcommand{\eeq}{\end{equation}}
\newcommand{\cal}{\mathcal}
\newcommand{\Res}{\mathop{\,\rm Res\,}}
\newcommand{\shift}{12}
\newcommand{\ca}{\mathcal{A}}
\newcommand{\cb}{\mathcal{B}}
\newcommand{\cc}{\mathcal{C}}
\newcommand{\ce}{\mathcal{E}}
\newcommand{\cl}{\mathcal{L}}
\newcommand{\co}{\mathcal{O}}
\newcommand{\cp}{{\cal P}}
\newcommand{\bc}{\mathbb{C}}
\newcommand{\bn}{\mathbb{N}}
\newcommand{\bp}{\mathbb{P}}
\newcommand{\bq}{\mathbb{Q}}
\newcommand{\br}{\mathbb{R}}
\newcommand{\bz}{\mathbb{Z}}
\newcommand*{\Cdot}{\raisebox{-0.5ex}{\scalebox{1.8}{$\cdot$}}}
\newcommand{\diag}{\mathrm{diag}}
\newcommand{\modm}{\cal M}
\newcommand{\un}{1\!\!1}
\begin{document}
	
\title{A new cohomology class on the moduli space of curves}
\author{Paul Norbury}
\address{School of Mathematics and Statistics, University of Melbourne, VIC 3010, Australia}
\email{\href{mailto:norbury@unimelb.edu.au}{norbury@unimelb.edu.au}}
\thanks{}
\subjclass[2010]{32G15; 14D23; 53D45}
\date{\today}

\begin{abstract}
We define a collection $\Theta_{g,n}\in H^{4g-4+2n}(\overline{\cal M}_{g,n},\mathbb{Q})$  for $2g-2+n>0$ of cohomology classes that restrict naturally to boundary divisors.  We prove that the intersection numbers $\int_{\overline{\cal M}_{g,n}}\Theta_{g,n}\prod_{i=1}^n\psi_i^{m_i}$ can be recursively calculated.  We conjecture that a generating function for these intersection numbers is a tau function of the KdV hierarchy.  This is analogous to the conjecture of Witten proven by Kontsevich that a generating function for the intersection numbers $\int_{\overline{\cal M}_{g,n}}\prod_{i=1}^n\psi_i^{m_i}$ is a tau function of the KdV hierarchy.
\end{abstract}

\maketitle

\tableofcontents

\section{Introduction}

Let $\overline{\modm}_{g,n}$ be the moduli space 
of genus $g$ stable curves---curves with only nodal singularities and finite automorphism group---with $n$ labeled points disjoint from nodes.   Define $\psi_i=c_1(L_i)\in H^{2}(\overline{\mathcal{M}}_{g,n},\mathbb{Q}) $ to be the first Chern class of the line bundle $L_i\to\overline{\mathcal{M}}_{g,n}$ with fibre above $[(C,p_1,\ldots,p_n)]$ given by $T_{p_i}^*C$.  Consider the natural maps given by the forgetful map which forgets the last point
\begin{equation}\label{forgetful}
\overline{\modm}_{g,n+1}\stackrel{\pi}{\longrightarrow}\overline{\modm}_{g,n}
\end{equation}
and the gluing maps which glue the last two points
\begin{equation}\label{gluing}
\overline{\modm}_{g-1,n+2}\stackrel{\phi_{\text{irr}}}{\longrightarrow}\overline{\modm}_{g,n},\quad\overline{\modm}_{h,|I|+1}\times\overline{\modm}_{g-h,|J|+1}\stackrel{\phi_{h,I}}{\longrightarrow}\overline{\modm}_{g,n},\quad I\sqcup J=\{1,...,n\}
\end{equation}

In this paper we construct cohomology classes $\Theta_{g,n}\in H^*(\overline{\modm}_{g,n},\bq)$ for $g\geq 0$, $n\geq 0$ and $2g-2+n>0$ satisfying the following four properties:
\begin{enumerate}[(i)]
\setlength{\itemindent}{20pt}
\item $\Theta_{g,n}\in H^*(\overline{\modm}_{g,n},\bq)$ is of pure degree, \label{pure}
\item $\phi_{\text{irr}}^*\Theta_{g,n}=\Theta_{g-1,n+2}$,\quad $\phi_{h,I}^*\Theta_{g,n}=\pi_1^*\Theta_{h,|I|+1}\cdot\pi_2^* \Theta_{g-h,|J|+1}$,  \label{glue}
\item $\Theta_{g,n+1}=\psi_{n+1}\cdot\pi^*\Theta_{g,n}$,  \label{forget}
\item  $\Theta_{1,1}\neq 0$  \label{base}
\end{enumerate}
where $\pi_i$ is projection onto the $i$th factor of $\overline{\modm}_{h,|I|+1}\times\overline{\modm}_{g-h,|J|+1}$.
We prove below that the properties \eqref{pure}-\eqref{base} uniquely define intersection numbers of the classes $\Theta_{g,n}$ with the classes $\psi_i$ and more generally with classes in the tautological ring $RH^*(\overline{\modm}_{g,n})\subset H^{2*}(\overline{\modm}_{g,n},\bq)$.  
\begin{remark}  \label{gluestab}
One can replace \eqref{glue} by the equivalent property
$$\phi_{\Gamma}^*\Theta_{g,n}=\Theta_{\Gamma}.$$
for any stable graph $\Gamma$, defined in Section~\ref{sec:unique}, of genus $g$ with $n$ external edges.  Here
$$\phi_{\Gamma}:\overline{\modm}_{\Gamma}=\prod_{v\in V(\Gamma)}\overline{\modm}_{g(v),n(v)}\to\overline{\modm}_{g,n},\quad \Theta_{\Gamma}=\prod_{v\in V(\Gamma)}\pi_v^*\Theta_{g(v),n(v)}\in H^*(\overline{\modm}_{\Gamma},\bq)$$
where $\pi_v$ is projection onto the factor $\overline{\modm}_{g(v),n(v)}$.
This generalises \eqref{glue} from 1-edge stable graphs given by $\phi_{\Gamma_{\text{irr}}}=\phi_{\text{irr}}$ and $\phi_{\Gamma_{h,I}}=\phi_{h,I}$. 
\end{remark}
\begin{remark}
The sequence of classes $\Theta_{g,n}$ satisfies many properties of a cohomological field theory (CohFT).  It is essentially a 1-dimensional CohFT with vanishing genus zero classes, not to be confused with Hodge classes which are trivial in genus zero but do not vanish there.  The trivial cohomology class $1\in H^0(\overline{\modm}_{g,n},\bq)$, which is a trivial example of a CohFT known as a topological field theory, satisfies conditions \eqref{pure} and \eqref{glue}, while the forgetful map property \eqref{forget} is replaced by $\Theta_{g,n+1}=\pi^*\Theta_{g,n}$.  
\end{remark}

\begin{theorem}  \label{main}
There exists a class $\Theta_{g,n}$ satisfying \eqref{pure} - \eqref{base} and furthermore any such class satisfies the following properties.
\begin{enumerate}[(I)]
\item 
$\Theta_{g,n}\in H^{4g-4+2n}(\overline{\modm}_{g,n},\bq)$. \label{degree}
\item $\Theta_{0,n}=0$ for all $n$ and $\phi_{\Gamma}^*\Theta_{g,n}=0$ for any $\Gamma$ with a genus 0 vertex. \label{genus0}
\item $\Theta_{g,n}\in H^*(\overline{\modm}_{g,n},\bq)^{S_n}$, i.e. it is symmetric under the $S_n$ action.  \label{symmetric}
\item $\Theta_{1,1}=3\psi_1$. \label{initial}

\vspace{-1.5mm}
\item  For any $\eta\in RH^{g-1}(\overline{\modm}_{g,n})$, 
the intersection number $ \displaystyle\int_{\overline{\modm}_{g,n}}\hspace{-3.5mm}\Theta_{g,n}\eta\in\bq$
is uniquely 

\vspace{-1.8mm} \noindent determined by \eqref{pure} - \eqref{forget} and \eqref{initial}.  \label{unique}
\end{enumerate}
\end{theorem}
The main content of Theorem~\ref{main} is the existence of $\Theta_{g,n}$, the rigidity property \eqref{initial} and the uniqueness property \eqref{unique}.  The existence of $\Theta_{g,n}$ is constructed via the push-forward of a class over the moduli space of spin curves in Section~\ref{existence}.  The rigidity property \eqref{initial} is proven in Section~\ref{sec:unique} by starting with $\Theta_{1,1}=\lambda\psi_1$ and determining constraints on $\lambda$ to arrive at $\lambda=3$ which does occur due to the construction of $\Theta_{g,n}$. The uniqueness result \eqref{unique} involving classes in the tautological ring $RH^*(\overline{\modm}_{g,n})$ is non-constructive since it relies on the existence of non-explicit tautological relations.  The proofs of properties~\eqref{degree} - \eqref{symmetric} are straightforward and presented in Section~\ref{sec:unique}.  Section~\ref{sec:cohft} describes how the classes $\Theta_{g,n}$ naturally combine with any cohomological field theory. 

\begin{remark}
Properties \eqref{pure} - \eqref{base} uniquely define the classes $\Theta_{g,n}$ for $g\leq 4$ and all $n$, but it is not known if they uniquely define the classes $\Theta_{g,n}$ in general. 
Uniqueness would follow from injectivity of the pull-back map to the boundary 
\[
RH^{2g-2}(\overline{\modm}_{g})\to RH^{2g-2}(\partial\overline{\modm}_{g})
\]
which holds for $g=2$, 3 and 4.  It would show that $\Theta_g\in RH^{2g-2}(\overline{\modm}_{g})$ is uniquely determined from its restriction, and consequently $\Theta_{g,n}$ would coincide with the classes constructed in Section~\ref{existence} for all $n\geq 0$. 
\end{remark}

The following conjecture allows one to recursively calculate all intersection numbers $\int_{\overline{\modm}_{g,n}}\Theta_{g,n}\prod_{i=1}^n\psi_i^{m_i}$ via relations coming out of the KdV hierarchy.  Such a recursive calculation would strengthen property \eqref{unique} since intersections of $\Theta_{g,n}$ with $\psi$ classes determine all tautological intersections with $\Theta_{g,n}$ algorithmically.
\begin{conjecture}  \label{kdvconj}
The  function
\[ Z^{\Theta}(\hbar,t_0,t_1,...)=\exp\sum_{g,n,\vec{k}}\frac{\hbar^{g-1}}{n!}\int_{\overline{\modm}_{g,n}}\Theta_{g,n}\cdot\prod_{j=1}^n\psi_j^{k_j}\prod t_{k_j}\]
is the Br\'ezin-Gross-Witten tau function of the KdV hierarchy. \label{thetatau}
\end{conjecture}
The Br\'ezin-Gross-Witten KdV tau function $Z^{\text{BGW}}$ was defined in \cite{BGrExt,GWiPos}.
Conjecture~\ref{kdvconj} has been verified up to $g=7$, i.e. the coefficients of the expansion of the logarithm of the Br\'ezin-Gross-Witten tau function are given by intersection numbers of the classes $\Theta_{g,n}$ for $g\leq 7$ and all $n$.  Progress towards Conjecture~\ref{kdvconj}, including a purely combinatorial formulation that can be stated without reference to the moduli space of stable curves or the KdV hierarchy is discussed in Section~\ref{conj}.
\\

\noindent {\em Acknowledgements.}   I would like to thank Dimitri Zvonkine for his ongoing interest in this work which benefited immensely from many conversations together.  I would also like to thank Vincent Bouchard, Alessandro Chiodo, Alessandro Giacchetto, Oliver Leigh, Danilo Lewanksi, Rahul Pandharipande, Johannes Schmitt, Mehdi Tavakol, Ran Tessler, Ravi Vakil and Edward Witten for useful conversations, the anonymous referee for comments which improved the paper, 
and the Institut Henri Poincar\'e where part of this work was carried out.

\section{Existence}   \label{existence}
The existence of a cohomology class $\Theta_{g,n}\in H^*(\overline{\modm}_{g,n},\bq)$ satisfying \eqref{pure} - \eqref{base} is proven here using the moduli space of stable twisted spin curves $\overline{\modm}_{g,n}^{\rm spin}$ which consist of pairs $(\Sigma,\theta)$ given by a twisted stable curve $\Sigma$ equipped with an orbifold line bundle $\theta$ together with an isomorphism $\theta^{\otimes 2}\cong \omega_{\Sigma}^{\text{log}}$.  See precise definitions below.  We first construct a cohomology class on $\overline{\modm}_{g,n}^{\rm spin}$ and then push it forward to a cohomology class on $\overline{\modm}_{g,n}$.

A stable twisted curve, with group $\bz_2$, is a 1-dimensional orbifold, or stack, $\cc$ such that generic points of $\cc$ have trivial isotropy group and non-trivial orbifold points have isotropy group $\bz_2$.  A stable twisted curve is equipped with a map which forgets the orbifold structure $\rho:\cc\to C$ where $C$ is a stable curve known as the coarse curve of $\cc$.  We say that $\cc$ is smooth if its coarse curve $C$ is smooth.  Each nodal point of $\cc$ (corresponding to a nodal point of $C$) has non-trivial isotropy group, the local picture at each node is $\{xy = 0\}/\bz_2$ with $\bz_2$-action given by $(-1)\cdot(x, y) = (-x, -y)$, and all other points of $\cc$ with non-trivial isotropy group are labeled points of $\cc$.  

A line bundle $L$ over $\cc$ is a locally equivariant bundle over the local charts, such that at each nodal point there is an equivariant isomorphism of fibres.  Hence each orbifold point $p$ associates a representation of $\bz_2$ on $L|_p$ acting by multiplication by $\exp(2\pi i\lambda_p)$ for $\lambda_p=0$ or $\frac12$.  One says $L$ is {\em banded} at $p$ by $\lambda_p$.  The equivariant isomorphism at nodes guarantees that the representations agree on each local irreducible component at the node.

The canonical bundle $\omega_\cc$ of $\cc$ is generated by $dz$ for any local coordinate $z$.  At an orbifold point $x=z^2$ the canonical bundle $\omega_\cc$ is generated by $dz$ hence it is banded by $\frac12$ i.e. $dz\mapsto-dz$ under $z\mapsto -z$.  Over the coarse curve $\omega_C$ is generated by $dx=2zdz$.  In other words $\rho^*\omega_C\not\cong\omega_\cc$ however $\omega_C\cong\rho_*\omega_\cc$.  Moreover, $\deg\omega_C=2g-2$ and 
$$\deg\omega_\cc=2g-2+\frac12n.$$
For $\omega_\cc^{\text{log}}=\omega_\cc(p_1,...,p_n)$, locally $\frac{dx}{x}=2\frac{dz}{z}$ so $\rho^*\omega_C^{\text{log}}\cong\omega_\cc^{\text{log}}$ and $\deg\omega_C^{\text{log}}=2g-2+n=\deg\omega_\cc^{\text{log}}$.

Following \cite{AJaMod}, define the moduli space of stable twisted spin curves by
$$\overline{\modm}_{g,n}^{\rm spin}=\{(\cc,\theta,p_1,...,p_n,\phi)\mid \phi:\theta^2\stackrel{\cong}{\longrightarrow}\omega_{\cc}^{\text{log}}\}.
$$
Here $\omega_{\cc}^{\text{log}}$ and $\theta$ are line bundles over the stable twisted curve $\cc$ with labeled orbifold points $p_j$ and $\deg\theta=g-1+\frac12n$.  The pair $(\theta,\phi)$ is a {\em spin structure} on $\cc$.  The relation $\theta^2\stackrel{\cong}{\longrightarrow}\omega_{\cc}^{\text{log}}$ is possible because the representation associated to $\omega_{\cc}^{\text{log}}$ at $p_i$ is trivial---$dz/z\stackrel{z\mapsto-z}{\longrightarrow}dz/z$.  The equivariant isomorphism of fibres over nodal points forces the balanced condition $\lambda_{p_+}=\lambda_{p_-}$ for $p_\pm$ corresponding to $p$ on each irreducible component.

We can now define a vector bundle over $\overline{\modm}_{g,n}^{\rm spin}$ using the dual bundle $\theta^{\vee}$ on each stable twisted curve.  Denote by $\ce$ the universal spin structure on the universal stable twisted spin curve over $\overline{\modm}_{g,n}^{\rm spin}$.  Given a map $S\to\overline{\modm}_{g,n}^{\rm spin}$, $\ce$ pulls back to $\theta$ giving a family $(\cc,\theta,p_1,...,p_n,\phi)$ where $\pi:\cc\to S$ has stable twisted curve fibres, $p_i:S\to\cc$ are sections with orbifold isotropy $\bz_2$ and $\phi:\theta^2\stackrel{\cong}{\longrightarrow}\omega_{\cc/S}^{\text{log}}=\omega_{\cc/S}(p_1,..,p_n)$.  Consider the push-forward sheaf $\pi_*\ce^{\vee}$ over $\overline{\modm}_{g,n}^{\rm spin}$.  We have
$$\deg\theta^{\vee}=1-g-\frac12n<0.
$$
Furthermore, for any irreducible component $\cc'\stackrel{i}{\to}\cc$, the pole structure on sections of the log canonical bundle at nodes yields $i^*\omega_{\cc/S}^{\text{log}}=\omega_{\cc'/S}^{\text{log}}$.  Hence $\phi':(\theta|_{\cc'})^2\stackrel{\cong}{\longrightarrow}\omega_{\cc'/S}^{\text{log}}$, where $\phi'=i^*\circ\phi|_{\cc'}$.  Since the irreducible component $\cc'$ is stable its log canonical bundle has negative degree and
$$\deg\theta^{\vee}|_{\cc'}<0.
$$
Negative degree of $\theta^{\vee}$ restricted to any irreducible component implies $R^0\pi_*\ce^{\vee}=0$ and the following definition makes sense.
\begin{definition}  \label{obsbun}
Define a bundle $E_{g,n}=-R\pi_*\ce^\vee$ over $\overline{\modm}_{g,n}^{\rm spin}$ with fibre $H^1(\theta^{\vee})$.   
\end{definition}

Represent the band of $\theta$ at the labeled points by $\vec{\sigma}=(\sigma_1,...,\sigma_n)\in\{0,1\}^n$
so that at each labeled point $p_i$ the representation of $\bz_2$ on $\theta|_{p_i}$ is given by multiplication by $\exp(2\pi i\lambda_{p_i})$ for $\lambda_{p_i}=\frac12\sigma_i\in\{0,\tfrac12\}$.  The number of $p_i$ with $\lambda_{p_i}=0$ is even due to evenness of the degree of the push-forward sheaf $|\theta|:=\rho_*\co_\cc(\theta)$ on the coarse curve $C$, \cite{JKVMod}.  In the smooth case, the
boundary type of a spin structure is determined by an associated quadratic form applied to each of the $n$ boundary classes which vanishes since it is a homological invariant, again implying that the number of $p_i$ with $\lambda_{p_i}=0$ is even. The moduli space of stable twisted spin curves decomposes into components  determined by the band $\vec{\sigma}$:
$$\overline{\modm}_{g,n}^{\rm spin}=\bigsqcup_\sigma\overline{\modm}_{g,n,\vec{\sigma}}^{\rm spin}
$$
where $\overline{\modm}_{g,n,\vec{\sigma}}^{\rm spin}$ consists of those spin curves with $\theta$ banded by $\vec{\sigma}$, and the union is over the $2^{n-1}$ functions $\vec{\sigma}$ satisfying $\displaystyle|\vec{\sigma}|+n=\sum_{i=1}^n(\sigma_i+1)\in 2\bz$.  Each component $\overline{\modm}_{g,n,\vec{\sigma}}^{\rm spin}$ is connected except when $|\vec{\sigma}|=n$, in which case there are two connected components determined by their Arf invariant, and known as even and odd spin structures.  This follows from the case of smooth spin curves proven in \cite{NatMod}.

Restricted to $\overline{\modm}_{g,n,\vec{\sigma}}^{\rm spin}$, the bundle $E_{g,n}$ has rank 
\begin{equation}  \label{rank}
\text{rank\ }E_{g,n}=2g-2+\tfrac12(n+|\vec{\sigma}|)
\end{equation}
by the following Riemann-Roch calculation.   Orbifold Riemann-Roch takes into account the representation information \begin{align*}
h^0(\theta^{\vee})-h^1(\theta^{\vee})&=1-g+\deg \theta^{\vee}-\sum_{i=1}^n\lambda_{p_i}=1-g+1-g-\tfrac12n-\tfrac12|\vec{\sigma}|\\
&=2-2g-\tfrac12(n+|\vec{\sigma}|).
\end{align*}
Alternatively, one can use the usual Riemann-Roch calculation on the push-forward of $\theta$ to the underlying coarse curve $C$ as follows.  The sheaf of local sections $\co_\cc(L)$ of any line bundle $L$ on $\cc$ pushes forward to a sheaf $|L|:=\rho_*\co_\cc(L)$ on $C$ which can be identified with the local sections of $L$ invariant under the $\bz_2$ action.  Away from nodal points $|L|$ is locally free, hence a line bundle.  At nodal points, the push-forward $|L|$ is locally free when $L$ is banded by the trivial representation, and $|L|$ is a torsion-free sheaf that is not locally free when $L$ is banded by the non-trivial representation---see \cite{FJRQua}. 
The pull-back bundle is given by 
$$\rho^*(|\theta^\vee|)=\theta^\vee\otimes\bigotimes_{i\in I}\co(-\sigma_ip_i)$$
since locally invariant sections must vanish when the representation is non-trivial.
Hence $\deg|\theta^\vee|=\deg \theta^\vee-\tfrac12|\vec{\sigma}|$. Hence Riemann-Roch on the coarse curve yields the same result as above: $h^0(|\theta^{\vee}|)-h^1(|\theta^{\vee}|)=2-2g-\tfrac12(n+|\vec{\sigma}|)$.  It is proven in \cite{FJRQua} that  $H^i(\theta^{\vee})=H^i(|\theta^{\vee}|)$ so the calculations agree.

We have $h^0(\theta^{\vee})=0$ since $\deg\theta^{\vee}=1-g-\frac12n<0$, and the restriction of $\theta^{\vee}$ to any irreducible component $C'$, say of type $(g',n')$, also has negative degree, $\deg\theta^{\vee}|_{C'}=1-g'-\frac12n'<0$.  Hence $h^1(\theta^{\vee})=2g-2+\frac12(n+|\vec{\sigma}|)$.  Thus $H^1(\theta^{\vee})$ gives fibres of a rank $2g-2+\frac12(n+|\vec{\sigma}|)$ vector bundle.

The analogue of the boundary maps $\phi_{\text{irr}}$ and $\phi_{h,I}$ defined in \eqref{gluing} are multivalued maps 
defined as follows.  Consider a node $p\in\cc$ for $(\cc,\theta,p_1,...,p_n,\phi)\in\overline{\modm}_{g,n}^{\rm spin}$.  Denote the normalisation by $\nu:\tilde{\cc}\to\cc$ with points $p_\pm\in\tilde{\cc}$ that map to the node $p=\nu(p_\pm)$.   When $\tilde{\cc}$ is not connected, the spin structure $\nu^*\theta$ decomposes into two spin structures $\theta_1$ and $\theta_2$.  Any two spin structures $\theta_1$ and $\theta_2$ with bands at $p_+$ and $p_-$ that agree can glue, but not uniquely, to give a spin structure on $\cc$.  This gives rise to a multivalued map, as described in \cite[p.27]{FJRWit}, which uses the fibre product:
$$\begin{array}{ccc}\left(\overline{\modm}_{h,|I|+1}\times\overline{\modm}_{g-h,|J|+1}\right)\times_{\overline{\modm}_{g,n}}\overline{\modm}^{\rm spin}_{g,n}&\to&\overline{\modm}^{\rm spin}_{g,n}\\
\downarrow&&\downarrow\\
\overline{\modm}_{h,|I|+1}\times\overline{\modm}_{g-h,|J|+1}&\to&\overline{\modm}_{g,n}\end{array}
$$
and is given by
$$
\begin{array}{c}
\left(\overline{\modm}_{h,|I|+1}\times\overline{\modm}_{g-h,|J|+1}\right)
\times_{\overline{\modm}_{g,n}}\overline{\modm}^{\rm spin}_{g,n}\\\hspace{1.5cm}\swarrow\hat{\nu}\hspace{3cm}\phi_{h,I}\searrow\\
\qquad\overline{\modm}^{\rm spin}_{h,|I|+1}\times\overline{\modm}^{\rm spin}_{g-h,|J|+1}\hspace{1cm}\dashrightarrow\hspace{1cm}\overline{\modm}^{\rm spin}_{g,n}\end{array}
$$
where $I\sqcup J=\{1,...,n\}$.  The map $\hat{\nu}$ is given by the pull back of the spin structure obtained from $\overline{\modm}^{\rm spin}_{g,n}$ to the normalisation defined by the points of $\overline{\modm}_{h,|I|+1}$ and $\overline{\modm}_{g-h,|J|+1}$. The broken arrow $\dashrightarrow$ represents the multiply-defined map $\phi_{h,I}\circ\hat{\nu}^{-1}$.   The multivalued map $\phi_{h,I}\circ\hat{\nu}^{-1}$ naturally restricts to components $\overline{\modm}^{\rm spin}_{h,|I|+1,\sigma_1}\times\overline{\modm}^{\rm spin}_{g-h,|J|+1,\sigma_2}\dashrightarrow\overline{\modm}^{\rm spin}_{g,n,\vec{\sigma}}$ where $\vec{\sigma}$ and $I$ uniquely determine $\sigma_1$ and $\sigma_2$ since $\theta$ must be banded by $\lambda_p=0$ at an even number of orbifold points, which uniquely determines the band $\lambda_{p_+}=\lambda_{p_-}$ at the separating node.

When $\tilde{\cc}$ is connected, a spin structure $\theta$ on $\cc$ pulls back to a spin structure $\tilde{\theta}=\nu^*\theta$ on $\tilde{\cc}$.  As above, any spin structure $\tilde{\theta}$ with bands at $p_+$ and $p_-$ that agree glues non-uniquely, to give a spin structure on $\cc$, and defines a multiply-defined map which uses the fibre product:
$$\begin{array}{ccc}\overline{\modm}_{g-1,n+2}\times_{\overline{\modm}_{g,n}}\overline{\modm}^{\rm spin}_{g,n}&\to&\overline{\modm}^{\rm spin}_{g,n}\\
\downarrow&&\downarrow\\
\overline{\modm}_{g-1,n+2}&\to&\overline{\modm}_{g,n}\end{array}
$$
and is given by
$$
\begin{array}{c}\overline{\modm}_{g-1,n+2}\times_{\overline{\modm}_{g,n}}\overline{\modm}^{\rm spin}_{g,n}\\\swarrow\hat{\nu}\hspace{2cm}\phi_{\text{irr}}\searrow\\\overline{\modm}^{\rm spin}_{g-1,n+2}\hspace{1cm}\dashrightarrow\hspace{1cm}\overline{\modm}^{\rm spin}_{g,n}\end{array}
$$
Again, $\phi_{\text{irr}}\circ\hat{\nu}^{-1}$ naturally restricts to components $\overline{\modm}^{\rm spin}_{g-1,n+2,\vec{\sigma}'}\dashrightarrow\overline{\modm}^{\rm spin}_{g,n,\vec{\sigma}}$ but unlike the case of $\phi_{h,I}\circ\hat{\nu}^{-1}$ above, $\vec{\sigma}$ does not uniquely determine $\vec{\sigma}'$.  The map $\hat{\nu}$ now depends on $\theta$ and there are two cases corresponding to the decomposition of the fibre product $\overline{\modm}_{g-1,n+2}\times_{\overline{\modm}_{g,n}}\overline{\modm}^{\rm spin}_{g,n,\vec{\sigma}}$ into two components which depend on the behaviour of $\theta$ at the nodal point $p_\pm$.  Either $\theta$ is banded by $\lambda_{p_\pm}=\frac12$, or it is banded by $\lambda_{p_\pm}=0$, corresponding to $\vec{\sigma}'=(\vec{\sigma},1,1)$, respectively $\vec{\sigma}'=(\vec{\sigma},0,0)$.

The bundle $E_{g,n}$ behaves naturally with respect to the boundary divisors.
\begin{lemma}   \label{pullback}
On components where $\theta$ is banded by $\lambda_{p_\pm}=\frac12$ at the node:
$$
\phi_{\text{irr}}^*E_{g,n}\cong  \hat\nu^*E_{g-1,n+2},\quad\phi_{h,I}^*E_{g,n}\cong \hat\nu^*\left(\pi_1^*E_{h,|I|+1}\oplus\pi_2^*E_{g-h,|J|+1}\right)
$$
where $\pi_i$ is projection from $\overline{\modm}^{\rm spin}_{h,|I|+1}\times\overline{\modm}^{\rm spin}_{g-h,|J|+1}$ onto the $i$th factor, $i=1,2$. 
\end{lemma}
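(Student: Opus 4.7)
The plan is to reduce both identifications to a single local vanishing statement for the normalisation sequence of the universal spin curve. Let $\nu:\tilde{\cc}\to\cc$ be the normalisation of a stable twisted spin curve at a node $p$, and suppose $\theta$ is banded by $\lambda_{p_\pm}=\tfrac12$ at $p$. I claim the adjunction map
$$\theta^\vee\longrightarrow\nu_*\nu^*\theta^\vee$$
is an isomorphism of sheaves on $\cc$. In the ordinary (non-orbifold) nodal setting the cokernel is a rank one skyscraper at $p$ recording the matching condition for local sections across the node. In the Neveu--Schwarz case at hand, however, $\bz_2$ acts on the stalks of $\theta^\vee$ at $p_\pm$ by $-1$, so any invariant local section is odd in a uniformiser and automatically vanishes at $p_\pm$. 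The matching condition at the node is therefore trivially satisfied on both branches, and every pair of local equivariant sections on the normalisation already extends to $\cc$; the cokernel vanishes.

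Applying $R\pi_*$ and using $R\pi_*\circ\nu_*=R(\pi\circ\nu)_*$ yields $R\pi_*\theta^\vee\cong R(\pi\circ\nu)_*\nu^*\theta^\vee$. For the separating boundary $\phi_{h,I}$, work on the fibre product $\left(\overline{\modm}_{h,|I|+1}\times\overline{\modm}_{g-h,|J|+1}\right)\times_{\overline{\modm}_{g,n}}\overline{\modm}^{\rm spin}_{g,n}$; as noted in the excerpt, the separating hypothesis forces $\lambda_{p_\pm}=\tfrac12$, so the local claim applies. The universal normalisation decomposes into two families of spin curves of types $(h,|I|+1)$ and $(g-h,|J|+1)$, which is precisely the family produced by $\hat\nu$, so $R(\pi\circ\nu)_*\nu^*\theta^\vee$ splits as $\pi_1^*R\pi_{1*}\theta_1^\vee\oplus\pi_2^*R\pi_{2*}\theta_2^\vee$. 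Since $R^0\pi_*\theta^\vee=0$ (inherited by each side of the normalisation because $\deg\theta^\vee|_{\cc'}<0$ on every irreducible component, as established before Definition~\ref{obsbun}), only the $R^1$ term survives; dualising gives the claimed isomorphism. For the irreducible boundary $\phi_{\text{irr}}$ the excerpt restricts attention to the component with $\lambda_{p_\pm}=\tfrac12$, so the same vanishing applies; the normalisation is now connected of genus $g-1$ with two extra orbifold points banded by $\tfrac12$, yielding $\phi_{\text{irr}}^*E_{g,n}\equiv\hat\nu^*E_{g-1,n+2}$ directly.

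The main technical obstacle is checking that this pointwise vanishing of the cokernel upgrades to an isomorphism of sheaves in families over the fibre products. Concretely, one must verify that the normalisation of the universal twisted curve commutes with base change and that the $\bz_2$-equivariant local model at the universal node is uniform, so that the odd-vanishing argument can be promoted from stalks to an isomorphism of the relevant $\co$-modules on the universal curve. Once this is granted, the rest is a formal manipulation of derived push-forwards, together with the fact that $\hat\nu$ is, by construction, exactly the map identifying the normalisation of the universal spin curve with the universal spin curve on the boundary stratum.
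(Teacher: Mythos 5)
Your argument is correct, but it follows a genuinely different route from the paper's. The paper works cohomologically on fibres: it asserts that the natural comparison map $H^1(\cc,\theta^{\vee})\to H^1(\tilde{\cc},\tilde{\theta}^{\vee})$ is injective and then uses orbifold Riemann--Roch (where the $-\lambda_{p_\pm}$ corrections encode the Neveu--Schwarz banding) to show both sides have the same dimension, forcing an isomorphism. You instead prove the sharper sheaf-level statement that $\theta^{\vee}\to\nu_*\nu^*\theta^{\vee}$ is an isomorphism at a node banded by $\lambda_{p_\pm}=\tfrac12$, because $\bz_2$-equivariance forces invariant local sections to be odd and hence to vanish at $p_\pm$, killing the skyscraper cokernel that appears at an ordinary or Ramond node; the identification of the $R^1$'s and the splitting over a disconnected normalisation are then formal. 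This is exactly the Neveu--Schwarz counterpart of the analysis the paper does carry out, for the Ramond case, in the proof of Lemma~\ref{pullback1}, and it is in fact the geometric content implicitly needed to justify the paper's injectivity claim: with $H^0(\nu_*\nu^*\theta^{\vee})=0$, the connecting map out of $H^0$ of the cokernel can only be seen to vanish because the cokernel itself vanishes. What your approach buys is naturality and a statement that holds on the universal curve, so the isomorphism in families comes for free rather than from a pointwise constant-rank argument; what the paper's approach buys is brevity, since the Riemann--Roch computation is already set up for Definition~\ref{obsbun}. The base-change and uniform-local-model points you flag at the end are standard verifications for balanced twisted nodal families and do not constitute a gap.
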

\begin{proof}
A spin structure $\tilde{\theta}$ on a connected normalisation $\tilde{\cc}$ has 
$$\deg\tilde{\theta}^{\vee}=1-(g-1)-\frac12(n+2)<0$$ 
and also negative degree on all irreducible components, hence $H^0(\tilde{\theta}^{\vee})=0$.
By Riemann-Roch 
$$h^0(\tilde{\theta}^{\vee})-h^1(\tilde{\theta}^{\vee})=1-(g-1)+\deg\tilde{\theta}^{\vee}-\frac12(n+2)=2-2g-n.$$ 
Hence $\dim H^1(\tilde{\theta}^{\vee})=\dim H^1(\theta^{\vee})$ and the natural map
$$ 0\to H^1(\cc,\theta^{\vee})\to H^1(\tilde{\cc},\tilde{\theta}^{\vee})
$$
is an isomorphism.  In other words $\phi_{\text{irr}}^*E_{g,n}\cong  \nu^*E_{g-1,n+2}$.

The argument is analogous when $\tilde{\cc}$ is not connected and $\lambda_{p_\pm}=\frac12$.  Again $\deg\theta_i^{\vee}<0$, and it has negative degree on all irreducible components, hence $H^0(\theta_i^{\vee})=0$ for $i=1,2$.  By Riemann-Roch 
$$\dim H^1(\theta_1^{\vee})+\dim H^1(\theta_2^{\vee})=\dim H^1(\theta^{\vee})$$ 
so the natural map
$$ 0\to H^1(\cc,\theta^{\vee})\to H^1(\tilde{\cc}_1,\theta_1^{\vee})\oplus H^1(\tilde{\cc}_2,\theta_2^{\vee})
$$
is an isomorphism.  In other words $\phi_{h,I}^*E_{g,n}\cong \hat\nu^*\left(\pi_1^*E_{h,|I|+1}\oplus\pi_2^*E_{g-h,|J|+1}\right)$.
\end{proof}
The pull-back of $E_{g,n}$ to boundary divisors with trivial isotropy at the node is described in the following lemma.
\begin{lemma}  \label{pullback1}
On components where $\theta$ is banded by $\lambda_{p_\pm}=0$ at the node:
\begin{equation}  \label{pb1}
0\to \co_{X_{h,I}}\to \phi_{h,I}^*E_{g,n}\to \hat\nu^*\left(\pi_1^*E_{h,|I|+1}\oplus\pi_2^*E_{g-h,|J|+1}\right)
 \to 0
\end{equation} 
for $X_{h,I}=\left(\overline{\modm}_{h,|I|+1}\times\overline{\modm}_{g-h,|J|+1}\right)
\times_{\overline{\modm}_{g,n}}\overline{\modm}^{\rm spin}_{g,n}$
and
\begin{equation}  \label{pb2}
0\to\co_{X_{\text{irr}}} \to \phi_{\text{irr}}^*E_{g,n}\to\hat\nu^*E_{g-1,n+2}  \to 0
\end{equation}
for $X_{\text{irr}}=\overline{\modm}_{g-1,n+2}\times_{\overline{\modm}_{g,n}}\overline{\modm}^{\rm spin}_{g,n}$.
\end{lemma}
\begin{proof}
When the bundle $\theta$ is banded by $\lambda_{p_\pm}=0$, the map between sheaves of local holomorphic sections
$$\co_C({\theta},U)\to\co_{\tilde{C}}({\nu^*\theta},\nu^{-1}U)$$ 
is not surjective whenever $U\ni p$.  The image consists of local sections that agree, under an identification of fibres, at $p_+$ and $p_-$.  Hence the dual bundle $\theta^\vee$ on $\cc$ is a quotient sheaf
\begin{equation}  \label{normcomp}
0\to I\to\nu^*\theta^\vee\to\theta^\vee\to 0
\end{equation}

\begin{equation}  
0\to \theta^\vee\to\nu_*\nu^*\theta^\vee\to\bc_p\to 0
\end{equation} 
where $\co_{\tilde{C}}(I,U)$ is generated by the element of the dual that sends a local section $s\in \co_{\tilde{C}}({\nu^*\theta},\nu^{-1}U)$ to $s(p_+)-s(p_-)$.  Note that evaluation $s(p_\pm)$ only makes sense after a choice of trivialisation of $\nu^*\theta$ at $p_+$ and $p_-$, but the ideal $I$ is independent of this choice.  The complex \eqref{normcomp} splits as follows.  We can choose a representative $\phi$ upstairs of any element from the quotient space so that $\phi(p_+)=0$, i.e. $\co_C(\theta^\vee,U)$ corresponds to elements of $\co_{\tilde{C}}({\nu^*\theta^\vee},\nu^{-1}U)$ that vanish at $p_+$.  This is achieved by adding the appropriate multiple of $s(p_+)-s(p_-)$ to a given $\phi\in\co_{\tilde{C}}({\nu^*\theta^\vee},\nu^{-1}U)$.  (Note that $\phi(p_-)$ is arbitrary.  One could instead arrange $\phi(p_-)=0$ with $\phi(p_+)$ arbitrary.)  In other words we can identify $\theta^\vee$ with $\nu^*\theta^\vee(-p_+)$ in the complex:
$$0\to \nu^*\theta^\vee(-p_+)\to \nu^*\theta^\vee\to \nu^*\theta^\vee|_{p_+}\to 0.
$$
In a family $\pi:C\to S$, $R^0\pi_*(\nu^*\theta^\vee)=0=R^0\pi_*(\nu^*\theta^\vee(-p_+))$ since $\deg\nu^*\theta^\vee<0$, and it has negative degree on all irreducible components.  Also $R^1\pi_*(\nu^*\theta^\vee|_{p_+})=0$ since $p_+$ has relative dimension 0.  Thus
\begin{equation}  \label{pb3}
0\to R^0\pi_*(\nu^*\theta^\vee|_{p_+})\to R^1\pi_*(\nu^*\theta^\vee(-p_+))\to R^1\pi_*(\nu^*\theta^\vee) \to 0.
\end{equation}
We can identify the dual of the sequence \eqref{pb3} with the sequences \eqref{pb1} and \eqref{pb2} as follows.  For the first term of \eqref{pb3}, we have $\nu^*\theta^\vee|_{p_+}\cong\bc$ canonically via evaluation, hence $R^0\pi_*(\nu^*\theta^\vee|_{p_+})\cong\co_S$.  The second and third terms of \eqref{pb3} are  identified with the corresponding terms of \eqref{pb1}, respectively the second and third terms of \eqref{pb2}, by $\hat\nu^*\left(\pi_1^*E_{h,|I|+1}\oplus\pi_2^*E_{g-h,|J|+1}\right)=R^1\pi_*(\nu^*\theta^\vee)$ and $\phi_{h,I}^*E_{g,n}=R^1\pi_*(\nu^*\theta^\vee(-p_+))$, respectively
$\hat\nu^*E_{g-1,n+2}=$ $R^1\pi_*(\nu^*\theta^\vee)$ and $\phi_{\text{irr}}^*E_{g,n}=R^1\pi_*(\nu^*\theta^\vee(-p_+))$.
\end{proof}
\begin{remark}
In Lemma~\ref{pullback}, the nodal band is $\lambda_{p_\pm}=\frac12$ and we have $\lambda_{p_+}+\lambda_{p_-}=1$.  We see from Lemma~\ref{pullback1} that $\lambda_{p_\pm}=0$ really wants one of $\lambda_{p_\pm}$ to be 1 to preserve $\lambda_{p_+}+\lambda_{p_-}=1$.
\end{remark}

\begin{definition}  \label{fundclass}
For $2g-2+n>0$ define the Chern class 
$$\Omega_{g,n}:=c_{2g-2+n}(E_{g,n})\in H^{4g-4+2n}(\overline{\modm}_{g,n}^{\rm spin},\bq).
$$
\end{definition} 
On the component $\overline{\modm}_{g,n,\vec{\sigma}}^{\rm spin}$ of $\overline{\modm}_{g,n}^{\rm spin}$ for $|\vec{\sigma}|=n$ this defines the top Chern class, or Euler class.   The Chern class vanishes on all other components because by \eqref{rank} the rank of $E_{g,n}=2g-2+\frac12(|\vec{\sigma}|+n)<2g-2+n$ when $|\vec{\sigma}|<n$.  Note that $\Omega_{0,n}=0$ for $n\geq 3$ because rank$(E_{0,n})=n-2$ is greater than $\dim\overline{\modm}_{0,n}^{\rm spin}=n-3$ so its top Chern class vanishes.  

The cohomology classes $\Omega_{g,n}$ behave well with respect to inclusion of strata.
\begin{lemma} \label{omeganat}
$$\phi_{\text{irr}}^*\Omega_{g,n}=\hat\nu^*\Omega_{g-1,n+2},\quad \phi_{h,I}^*\Omega_{g,n}=\hat\nu^*\left(\pi_1^*\Omega_{h,|I|+1}\cdot\pi_2^*\Omega_{g-h,|J|+1}\right).$$ 
\end{lemma}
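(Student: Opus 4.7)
The plan is to reduce all three identities to pure Chern-class formalism by combining the bundle-level pullback descriptions in Lemma~\ref{pullback} and Lemma~\ref{pullback1} with standard multiplicativity properties of the Euler class. The numerical bookkeeping throughout is that $\mathrm{rank}\, E_{g,n} = 2g-2+n$, so $\Omega_{g,n}$ is literally the top Chern class, and the ranks of the bundles on the right-hand side of each identity will add up correctly.

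For the first identity, I would apply $c_{2g-2+n}$ to the isomorphism $\phi_{\text{irr}}^*E_{g,n} \cong \hat\nu^*E_{g-1,n+2}$ from Lemma~\ref{pullback} and use naturality of Chern classes; both bundles have rank $2g-2+n$. For the second identity, I would apply the same step to the direct-sum decomposition $\phi_{h,I}^*E_{g,n} \cong \hat\nu^*(\pi_1^*E_{h,|I|+1}\oplus \pi_2^*E_{g-h,|J|+1})$ and invoke the Whitney formula $c_{\mathrm{top}}(V\oplus W) = c_{\mathrm{top}}(V)\cdot c_{\mathrm{top}}(W)$, noting that the ranks satisfy $(2h-1+|I|)+(2(g-h)-1+|J|)=2g-2+n$, so the top Chern class on the left matches the product of top Chern classes on the right.

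The third identity is the one with a different flavour. Lemma~\ref{pullback1} supplies a short exact sequence
$$0 \to \hat\nu^*E_{g-1,n,2} \to \phi_{\text{irr},2}^*E_{g,n} \to \co \to 0,$$
so multiplicativity of the total Chern class over short exact sequences and triviality of $c(\co)$ give $c(\phi_{\text{irr},2}^*E_{g,n}) = c(\hat\nu^*E_{g-1,n,2})$. The key observation is the rank discrepancy recorded earlier in the text: $\mathrm{rank}\, E_{g-1,n,2} = 2g-3+n$, one less than $\mathrm{rank}\, E_{g,n}$. Extracting the degree-$(2g-2+n)$ component therefore yields
$$\phi_{\text{irr},2}^*\Omega_{g,n} = c_{2g-2+n}(\hat\nu^*E_{g-1,n,2}) = 0$$
simply because this exceeds the rank.

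There is no real obstacle: all of the geometry has already been packaged into Lemmas~\ref{pullback} and~\ref{pullback1}, and what remains is the formal behaviour of the Euler class under pullback, direct sums, and short exact sequences. The only point demanding care is the rank accounting in the third case, which is exactly what forces the vanishing rather than a nontrivial correction term.
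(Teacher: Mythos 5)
Your proof is correct and follows essentially the same route as the paper: both reduce the first two identities to naturality of the top Chern class applied to Lemma~\ref{pullback} (with the Whitney formula for the direct sum), and both derive the third from the short exact sequence of Lemma~\ref{pullback1} via multiplicativity of the Chern class. Your phrasing of the vanishing (the degree-$(2g-2+n)$ component of $c(\hat\nu^*E_{g-1,n,2})$ exceeds the rank $2g-3+n$) is just a restatement of the paper's $c_{2g-3+n}(E_{g-1,n,2})\cdot c_1(\co)=0$.
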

 
\begin{proof}
When $|\vec{\sigma}|=n$ and $\theta$ is banded by $\frac12$ at the nodal point
then this is an immediate application of Lemma~\ref{pullback}:
$$
\phi_{\text{irr}}^*E_{g,n}\equiv  \hat\nu^*E_{g-1,n+2},\quad\phi_{h,I}^*E_{g,n}\cong \hat\nu^*\left(\pi_1^*E_{h,|I|+1}\oplus\pi_2^*E_{g-h,|J|+1}\right)
$$
and the naturality of $c_{2g-2+n}=c_{\rm top}$.  We have 
\begin{align*}
\phi_{\text{irr}}^*c_{\rm top}(E_{g,n})&=\hat\nu^*c_{\rm top}(E_{g-1,n+2}),\\
\phi_{h,I}^*c_{\rm top}(E_{g,n})&=\hat\nu^*\left(\pi_1^*c_{\rm top}(E_{h,|I|+1})\cdot\pi_2^*c_{\rm top}(E_{g-h,|J|+1})\right).
\end{align*}

When $|\vec{\sigma}|=n$ and $\theta$ is banded by $0$ at the nodal point then the nodal point is necessarily non-separating and we must consider the restriction of $\Omega_{g,n}$ to the component $\overline{\modm}_{g-1,n+2,\vec{\sigma}'}^{\rm spin}$ of $\overline{\modm}_{g-1,n+2}^{\rm spin}$ with $|\vec{\sigma}'|=n$.  On this component we have the exact sequence of Lemma~\ref{pullback1}
$$0\to E_{g-1,n+2}\to \phi_{\text{irr}}^*E_{g,n}\to \co_{\overline{\modm}^{\rm spin}_{g-1,n+2,\vec{\sigma}'}} \to 0
$$
which implies $\phi_{\text{irr}}^*c_{2g-2+n}(E_{g,n})=c_{2g-3+n}(E_{g-1,n+2,\vec{\sigma}'})\cdot c_1(\co_{\overline{\modm}^{\rm spin}_{g-1,n+2,\vec{\sigma}'}})=0$.  This vanishing result is a special case of the pull-back by $\phi_{\text{irr}}^*$ since $\Omega_{g-1,n+2}$ vanishes on $\overline{\modm}_{g-1,n+2,\vec{\sigma}'}^{\rm spin}$ for $|\vec{\sigma}'|=n$.

Finally, when $|\vec{\sigma}|<n$ this is simply the pull-back of the trivial class being trivial, since in each case the restriction to an irreducible component has at least one labeled point with band = 0 so that the right hand side vanishes.
\end{proof}

\smallskip

The cohomology classes $\Omega_{g,n}$ also behave well with respect to the forgetful map 
$$\pi:\overline{\modm}_{g,n+1}^{\rm spin}\to\overline{\modm}_{g,n}^{\rm spin}
$$
which is defined on components with $\theta$ banded by $\frac12$ at $p_{n+1}$ as follows.  Define 
$$\pi(\cc,\theta,p_1,...,p_{n+1},\phi)=(\rho(\cc),\rho_*\theta,p_1,...,p_n,\rho_*\phi)$$ 
where $\rho(\cc)$ forgets the orbifold structure at $p_{n+1}$.  The push-forward sheaf $\rho_*\theta$ consists of local sections invariant under the $\bz_2$ action.  Since the representation at $p_{n+1}$ is given by multiplication by $-1$, any invariant local section must vanish at $p_{n+1}$.  In terms of a local orbifold coordinate $x=z^2$, an invariant section is of the form $zf(x)s$ for $s$ a generator of $\theta$ and its square 
\[(zf(x)s)^2=z^2f(x)^2s^2=xf(x)^2\tfrac{dx}{x}=f(x)^2dx\]
has no pole.  In other words its square is a section of $\omega_\cc^{\text{log}}$ with no pole at $p_{n+1}$ and hence a section of $\omega_{\rho(\cc)}^{\text{log}}=\omega_{\rho(\cc)}(p_1+p_2+...+p_n)$.  Furthermore, we have $\rho_*\theta=\rho_*\{\theta(-p_{n+1})\}$, $\rho^*\rho_*\theta=\{\theta(-p_{n+1})\}$ and $\deg\rho_*\theta=\deg\theta-\frac12$.  The forgetful map $\pi$ is used to denote any family $\pi:\cc\to S$ since $\overline{\modm}_{g,n+1}^{\rm spin}$ is essentially the universal curve of $\overline{\modm}_{g,n}^{\rm spin}$.

Tautological line bundles $L_{p_i}\to\overline{\modm}_{g,n}^{\rm spin}$, $i=1,...,n$ are defined analogously to those defined over $\overline{\modm}_{g,n}$ as follows.  Consider a family $\pi:\cc\to S$ with sections $p_i:S\to\cc$, $i=1,...,n$, and define 
\[L_{p_i}:=p_i^*(\omega_{\cc/S}),\quad  \psi_i=c_1(L_{p_i})\in H^*(\overline{\modm}_{g,n}^{\rm spin},\bq).\]

\begin{lemma}  \label{omegaforget}
$$\Omega_{g,n+1}=-\psi_{p_{n+1}}\cdot \pi^*\Omega_{g,n}.$$ 
\end{lemma}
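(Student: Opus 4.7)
The plan is to construct a short exact sequence of vector bundles on $\overline{\modm}_{g,n+1}^{\rm spin}$,
$$0\to\pi^*E_{g,n}\to E_{g,n+1}\to L_{p_{n+1}}\to 0,$$
from which multiplicativity of the total Chern class immediately yields
$$\Omega_{g,n+1}=c_{2g-1+n}(E_{g,n+1})=c_{2g-2+n}(\pi^*E_{g,n})\cdot c_1(L_{p_{n+1}})=\pi^*\Omega_{g,n}\cdot\psi_{p_{n+1}},$$
which is the statement of the lemma.

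To produce this SES I would work on the universal curve $\tilde\pi:\mathcal{X}\to\overline{\modm}_{g,n+1}^{\rm spin}$ with its universal spin $\theta$, and let $\tilde\theta$ denote the universal spin on $\overline{\modm}_{g,n}^{\rm spin}$ pulled back along the coarsening morphism $\rho$ that removes the orbifold structure at $p_{n+1}$.  The identity $\rho^*\tilde\theta=\theta(-p_{n+1})$, recorded in the construction of the forgetful map in Section~\ref{existence}, gives on $\mathcal{X}$ the short exact sequence of sheaves
$$0\to\theta^\vee\to\rho^*\tilde\theta^\vee\to\rho^*\tilde\theta^\vee|_{p_{n+1}}\to 0.$$
Applying $R\tilde\pi_*$, using that $R^0\tilde\pi_*\theta^\vee=0=R^0\tilde\pi_*\rho^*\tilde\theta^\vee$ by the negative-degree arguments preceding Definition~\ref{obsbun} and that the skyscraper $\rho^*\tilde\theta^\vee|_{p_{n+1}}$ has vanishing $R^{>0}\tilde\pi_*$, produces the short exact sequence
$$0\to p_{n+1}^*\rho^*\tilde\theta^\vee\to R^1\tilde\pi_*\theta^\vee\to R^1\tilde\pi_*\rho^*\tilde\theta^\vee\to 0.$$

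Next I would identify the outer terms.  Since $\rho$ is a $B\bz_2$-gerbe concentrated at the section $p_{n+1}$, one has $R\rho_*\co_{\mathcal{X}}=\co$, so the projection formula together with flat base change along $\pi$ identifies $R^1\tilde\pi_*\rho^*\tilde\theta^\vee$ with $\pi^*R^1\tilde\pi_{0*}\theta_0^\vee$, whose dual is $\pi^*E_{g,n}$.  For the left-hand term I would apply the conormal identity $p_{n+1}^*\co(-p_{n+1})=L_{p_{n+1}}$ to write $p_{n+1}^*\rho^*\tilde\theta=p_{n+1}^*\theta\otimes L_{p_{n+1}}$; squaring and using $\theta^2=\omega_{\mathcal{X}/\overline{\modm}_{g,n+1}^{\rm spin}}^{\log}$ together with the disjointness of distinct sections gives $(p_{n+1}^*\theta)^2=L_{p_{n+1}}\otimes L_{p_{n+1}}^\vee=\co$, so that $p_{n+1}^*\theta$ is $2$-torsion and $c_1(p_{n+1}^*\rho^*\tilde\theta)=\psi_{p_{n+1}}$ in rational cohomology.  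Dualising the derived push-forward sequence above then yields the desired SES of vector bundles.

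The principal subtlety I anticipate is the orbifold bookkeeping, namely verifying $R\rho_*\co_{\mathcal{X}}=\co$ and the conormal identity $p_{n+1}^*\co(-p_{n+1})=L_{p_{n+1}}$ at the $\bz_2$-stacky section $p_{n+1}$.  Both follow from tameness and exactness of the invariant functor on $\bz_2$-representations in characteristic zero, but a careful local check in orbifold coordinates $x=z^2$ is required to confirm that the half-integer degree contributions match correctly in the global formulas above.  The remainder of the argument is essentially formal, and the rank count $\mathrm{rk}\,E_{g,n+1}=\mathrm{rk}\,E_{g,n}+1$ coming from the Riemann--Roch computation is consistent with the SES.
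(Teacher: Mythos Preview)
Your proposal is correct and constructs the same short exact sequence
$0\to\pi^*E_{g,n}\to E_{g,n+1}\to L_{p_{n+1}}\to 0$
as the paper, from which the top Chern class identity follows immediately. The only technical differences are: (i) the paper tensors the standard sequence $0\to\co(-p_{n+1})\to\co\to\co|_{p_{n+1}}\to 0$ by $\omega_{\cc/S}\otimes\theta$ and then invokes Serre duality to pass to $R^1\pi_*\theta^\vee$, whereas you work with $\theta^\vee$ directly and dualise at the end --- these are Serre-dual formulations of the same argument, and indeed your required vanishing $R^0\tilde\pi_*\rho^*\tilde\theta^\vee=0$ is exactly Serre-dual to the paper's $R^1\pi_*(\omega_{\cc/S}\otimes\theta(-p_{n+1}))=0$ (which the paper itself marks ``(check)''); (ii) for the quotient line bundle, the paper uses the residue trivialisation $\omega^{\log}_{\cc/S}|_{p_{n+1}}\cong\co$ (citing \cite{FJRWit}) to deduce $\theta|_{p_{n+1}}\cong\co$, so the quotient is literally $p_{n+1}^*\omega_{\cc/S}=L_{p_{n+1}}$, whereas your conormal/2-torsion argument only identifies $c_1$ rationally. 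The residue argument is slightly cleaner and sidesteps the orbifold conormal check you flag, but both suffice for the Chern class conclusion.
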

 
\begin{proof} 
Over a family $\pi:\cc\to S$ where $S\to\overline{\modm}_{g,n+1}^{\rm spin}$ and $\theta\to\cc$ is the universal spin structure (also denoted by $\ce$), tensor the exact sequence of sheaves 
$$0\to \co_\cc(-p_{n+1})\to\co_\cc\to\co_\cc |_{p_{n+1}}\to 0$$
with $\theta^\vee(p_{n+1})$ to get:
$$0\to \theta^\vee\to\theta^\vee(p_{n+1})\to\theta^\vee(p_{n+1}) |_{p_{n+1}}\to 0.$$
This induces a long exact sequence which simplifies to the following short exact sequence:
$$0\to R^0\pi_*\theta^\vee(p_{n+1}) |_{p_{n+1}} \to R^1\pi_*\theta^\vee\to R^1\pi_*\theta^\vee(p_{n+1})\to 0$$
due to the vanishing $R^0\pi_*\theta^\vee(p_{n+1})=0=R^1\pi_*\theta^\vee(p_{n+1}) |_{p_{n+1}}$.  The first of these vanishing results uses the identification $\theta^\vee(p_{n+1})=\pi^*\theta^\vee$ described below together with the vanishing $R^0\pi_*\theta^{\vee}=0$ due to negative degree on each irreducible component described earlier.  The second of these vanishing results uses the simple dimension argument that $R^1\pi_*$ vanishes on the image of $p_{n+1}$ which has relative dimension 0.  

Recall that the forgetful map $(\cc,\theta,p_1,...,p_{n+1},\phi)\mapsto(\pi(\cc),\pi_*\theta,p_1,...,p_n,\pi_*\phi)$ pushes forward $\theta$ via $\pi$ which forgets the orbifold structure at $p_{n+1}$.  As described earlier, $\pi^*\pi_*\theta=\{\theta(-p_{n+1})\}$ since the push-forward gives the sheaf of locally invariant sections which necessarily vanish since the isotropy group acts by multiplication by $-1$.  Hence $\theta^\vee(p_{n+1})=\pi^*\theta^\vee$, which is used to calculate $R^0$ above, and also to give $R^1\pi_*\left(\theta^\vee(p_{n+1})\right)=R^1\pi_*\left(\pi^*\theta^\vee\right)=\pi^*R^1\pi_*\left(\theta^\vee\right)$.  Thus the last two terms of the short exact sequence become $E_{g,n+1}\to\pi^*E_{g,n}$.

For the first term of the short exact sequence, the residue map produces a canonical isomorphism $\omega_{\cc/S}^{\text{log}}\cong\bc$, so in particular
\[\pi_*\omega_{\cc/S}^{\text{log}}|_{p_{n+1}}=\co_S.\]
Thus $\pi_*\theta |_{p_{n+1}}$ and $\pi_*\theta^\vee |_{p_{n+1}}$ define line bundles over $S$ with square $\co_S$ and hence trivial Chern class $c(\pi_*\theta |_{p_{n+1}})=1=c(\pi_*\theta^\vee |_{p_{n+1}})$.   The first term of the short exact sequence $R^0\pi_*\theta^\vee(p_{n+1}) |_{p_{n+1}}$ defines a line bundle $\xi\to S$ with Chern class 
\[c(\xi)=c(R^0\pi_*\co_\cc(p_{n+1}) |_{p_{n+1}})\] 
that fits into the short exact sequence:
\[0\to\xi\to E_{g,n+1}\to\pi^*E_{g,n}\to 0.
\]
The triviality of $\pi_*\omega_{\cc/S}^{\text{log}}|_{p_{n+1}}$ implies
\[ L_{p_{n+1}}=R^0\pi_*\omega_{\cc/S}|_{p_{n+1}}=-R^0\pi_*\co_\cc(p_{n+1}) |_{p_{n+1}}
\]
hence 
\[c(\xi)=\frac{1}{c(L_{p_{n+1}})}=1-\psi_{p_{n+1}}.
\]
The short exact sequence then gives $c_{2g-2+n+1}(E_{g,n+1})=-\psi_{p_{n+1}}\cdot \pi^*c_{2g-2+n}(E_{g,n})$ as required.  
\end{proof} 
\begin{definition}
For $p:\overline{\modm}_{g,n}^{\rm spin}\to\overline{\modm}_{g,n}$ define
$$\Theta_{g,n}=(-1)^n2^{g-1+n}p_*\Omega_{g,n}\in H^{4g-4+2n}(\overline{\modm}_{g,n},\bq).
$$
\end{definition}
Lemma~\ref{omegaforget} and the relation
$$\psi_{n+1}=\frac12p^*\psi_{n+1}$$
proven in \cite[Prop. 2.4.1]{FJRWit}, together with the factor of $2^n$ in the definition of $\Omega_{g,n}$, immediately gives property \eqref{forget} of $\Theta_{g,n}$
$$\Theta_{g,n+1}=\psi_{n+1}\cdot\pi^*\Theta_{g,n}.$$  
Property \eqref{base} of $\Theta_{g,n}$ is given by the following calculation.
\begin{proposition}  \label{theta11}
$\Theta_{1,1}=3\psi_1\in H^2(\overline{\modm}_{1,1},\bq)$.
\end{proposition}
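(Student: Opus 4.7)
Since the coarse moduli space underlying $\overline{\modm}_{1,1}$ is $\mathbb{P}^1$, the rational cohomology group $H^2(\overline{\modm}_{1,1},\mathbb{Q})$ is one-dimensional, spanned by $\psi_1$. Hence $\Theta_{1,1} = c\,\psi_1$ for some $c\in\mathbb{Q}$, and since $\int_{\overline{\modm}_{1,1}}\psi_1 = 1/24$, the claim $c=3$ is equivalent to the numerical identity $\int_{\overline{\modm}_{1,1}}\Theta_{1,1} = 1/8$. By the definition $\Theta_{1,1} = 2\,p_*\,c_1(E_{1,1})$, this further reduces to showing
$$\int_{\overline{\modm}_{1,1}^{\rm spin}} c_1(E_{1,1}) = \tfrac{1}{16}.$$

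The rank-1 bundle $E_{1,1}$ has fibre $H^1(\theta^{\vee})^{\vee} \cong H^0(\theta\otimes\omega_\cc)$ by orbifold Serre duality; here $\theta\otimes\omega_\cc$ is banded trivially at $p_1$ (since $\tfrac{1}{2}+\tfrac{1}{2}\equiv 0$) and restricts to a degree $1$ line bundle on the underlying elliptic curve. The systematic approach is to apply Grothendieck--Riemann--Roch to the universal spin curve $\pi:\cc\to\overline{\modm}_{1,1}^{\rm spin}$, computing
$$c_1(E_{1,1}) \;=\; \mathrm{ch}_1\bigl(\pi_*(\theta\otimes\omega_\pi)\bigr) \;=\; \pi_*\bigl[\mathrm{ch}(\theta)\cdot\mathrm{ch}(\omega_\pi)\cdot\mathrm{td}(T_\pi)\bigr]_2,$$
which is an instance of Chiodo's formula for the spin twist $\omega^{\log,1/2}_\cc$, expressing $c_1(E_{1,1})$ in terms of $\psi$, $\kappa$ and boundary classes on $\overline{\modm}_{1,1}^{\rm spin}$. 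After push-forward along $p$ and multiplication by $2^n=2$, this produces an expression in $\psi_1$ and boundary on $\overline{\modm}_{1,1}$, which can be evaluated against the fundamental class.

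A more concrete alternative would be to describe $\overline{\modm}_{1,1}^{\rm spin}$ explicitly: a smooth elliptic curve admits $2^{2g}=4$ spin structures (parameterised by $2$-torsion of $\mathrm{Pic}$), grouped into components by parity, and over the nodal cubic one must distinguish the two boundary types with $\lambda_{p_\pm}=0$ or $\tfrac{1}{2}$. The line bundle $|\theta\otimes\omega_\cc|$ can then be identified component by component and its first Chern class computed directly.

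The main obstacle I anticipate is the orbifold bookkeeping rather than any deep geometry: correctly tracking the degree of the étale cover $p$ over the generic point of $\overline{\modm}_{1,1}$, the contributions from the two special automorphism points (at $j=0$ and $j=1728$), and separately handling the boundary strata corresponding to the two bandings at the node (where Lemma~\ref{pullback1} replaces Lemma~\ref{pullback}). The coefficient $3$ arises precisely from the correct combination of these factors, and any arithmetic slip will produce a different rational.
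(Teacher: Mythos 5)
Your reduction is correct and matches the paper's: $H^2(\overline{\modm}_{1,1},\bq)$ is spanned by $\psi_1$, so the claim is equivalent to $\int_{\overline{\modm}_{1,1}}\Theta_{1,1}=\tfrac18$, i.e.\ to $\int p_*c_1(E_{1,1})=\tfrac{1}{16}$, and the tool you name---orbifold Grothendieck--Riemann--Roch for the pushforward of the (dual) spin bundle, in the style of Chiodo's formula---is exactly what the paper uses (it adapts the computation of \cite[Theorem 6.3.3]{FJRWit} from $\ce$ with $\ce^2\cong\omega^{\log}$ to $\ce^\vee$). Your Serre-duality reformulation $H^1(\theta^\vee)^\vee\cong H^0(\theta\otimes\omega_\cc)$ and the banding/degree bookkeeping for $\theta\otimes\omega_\cc$ are also correct.

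However, the proposal stops precisely where the content of the proposition lies. You never evaluate the GRR expression: the proof requires producing the explicit class $\tfrac{11}{24}\kappa_1+\tfrac{1}{24}\psi_1+\tfrac12\bigl(-\tfrac{1}{24}+\tfrac{1}{12}\bigr)(i_\Gamma)_*(1)$ (the last term weighting the two boundary bandings $\lambda_{p_\pm}=0,\tfrac12$, which is where Lemma~\ref{pullback1} versus Lemma~\ref{pullback} enters), pushing it forward with the correct degree of $p$, and summing $\tfrac{11}{24^2}+\tfrac{1}{24^2}+\tfrac12\cdot\tfrac{1}{24}\cdot\tfrac12=\tfrac{1}{32}$, whence $\int p_*c_1(E_{1,1})=\tfrac{1}{16}$. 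You explicitly acknowledge that the coefficient $3$ depends on getting this combination of orbifold factors right and that ``any arithmetic slip will produce a different rational,'' but you do not carry out the arithmetic; as written, the argument establishes only that $\Theta_{1,1}$ is \emph{some} rational multiple of $\psi_1$, not that the multiple is $3$. To close the gap you would need to either perform the Chiodo/GRR evaluation with the specific Todd-class and boundary contributions, or execute your proposed alternative of identifying $|\theta\otimes\omega_\cc|$ component by component over the four spin structures and the two nodal types and computing its first Chern class directly---neither of which is done.
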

\begin{proof}
A one-pointed twisted elliptic curve $(\ce,p)$ is a one-pointed elliptic curve $(E,p)$ such that $p$ has isotropy $\bz_2$.  The degree of the divisor $p$ in $\ce$ is $\frac12$ and the degree of every other point in $\ce$ is 1.  If $dz$ is a holomorphic differential on $E$ (where $E=\bc/\Lambda$ and $z$ is the identity function on the universal cover $\bc$) then locally near $p$ we have $z=t^2$ so $dz=2tdt$ vanishes at $p$.  In particular, the canonical divisor $(\omega_{\ce})=p$ has degree $\frac12$ and $(\omega^{\text{log}}_{\ce})=(\omega_{\ce}(p))=2p$ has degree 1.  

A spin structure on $\ce$ is a degree $\frac12$ line bundle $\cl$ satisfying $\cl^2=\omega^{\text{log}}_{\ce}$.  Line bundles on $\ce$ correspond to divisors on $\ce$ up to linear equivalence.  Note that meromorphic functions on $\ce$ are exactly the meromorphic functions on $E$.  The four spin structures on $\ce$ are given by the divisors $\theta_0=p$ and $\theta_i=q_i-p$, $i=1,2,3$, where $q_i$ is a non-trivial order 2 element in the group $E$ with identity $p$.  Clearly $\theta_0^2=2p=\omega^{\text{log}}_{\ce}$.  For $i=1,2,3$, $\theta_i^2=2q_i-2p\sim 2p$ since there is a meromorphic function $\wp(z)-\wp(q_i)$ on $E$ with a double pole at $p$ and a double zero at $q_i$.  Its divisor on $\ce$ is $2q_i-4p$, since $p$ has isotropy $\bz_2$, hence $2q_i-2p\sim 2p$.

Since $H^2(\overline{\modm}_{1,1},\bq)$ is generated by $\psi_1$ it is enough to calculate $\int_{\overline{\modm}_{1,1}}\Theta_{1,1}$.  The Chern character of the push-forward bundle $E_{1,1}$ is calculated via the Grothendieck-Riemann-Roch theorem
$$ \text{Ch}(R\pi_*\ce^\vee)=\pi_*(\text{Ch}(\ce^\vee)\text{Td}(\omega_\pi^\vee)).
$$
In fact we need to use the orbifold Grothendieck-Riemann-Roch theorem \cite{ToeThe}.  The calculation we need is a variant of the calculation in \cite[Theorem 6.3.3]{FJRWit} which applies to $\ce$ such that $\ce^2=\omega_{\cc}^{\text{log}}$ instead of $\ce^\vee$.  Importantly, this means that the Todd class has been worked out, and it remains to adjust the $\text{Ch}(\ce^\vee)$ term.  We get
\begin{align*}
\int_{\overline{\modm}_{1,1}}p_*c_1(E_{1,1})&=- \text{Ch}(R\pi_*\ce^\vee)\\
&=-2\int_{\overline{\modm}_{1,1}}\left[\frac{11}{24}\kappa_1+\frac{1}{24}\psi_1+\frac12 \left(-\frac{1}{24}+\frac{1}{12}\right)(i_{\Gamma})_*(1)\right]\\
&=-2\left(\frac{11}{24^2}+\frac{1}{24^2}+\frac12\cdot \frac{1}{24}\cdot\frac12\right)=-\frac{1}{16}
\end{align*}
which agrees with
$$-\int_{\overline{\modm}_{1,1}}\frac32\psi_1=-\frac32\cdot\frac{1}{24}=-\frac{1}{16}.
$$
Hence $p_*c_1(E_{1,1})=-\frac32\psi_1$ and $\Theta_{1,1}=-2p_*c_1(E_{1,1})=3\psi_1$.
\end{proof}
\begin{proposition}
The classes $\Theta_{g,n}\in H^{4g-4+2n}(\overline{\modm}_{g,n},\bq)$ satisfy property \eqref{glue}.
\end{proposition}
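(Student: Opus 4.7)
The plan is to mimic the verification of the forgetful property \eqref{forget} just completed for Proposition~\ref{theta11}: I interpret Lemma~\ref{omeganat} as the spin-level avatar of \eqref{glue}, push it down along $p:\overline{\modm}^{\rm spin}_{g,n}\to\overline{\modm}_{g,n}$ by flat base change, factor the result through the product of spin moduli via the K\"unneth/projection formula, and bookkeep the powers of $2$ so that they match the $2^n$ built into the definition $\Theta_{g,n}=2^n p_*\Omega_{g,n}$.

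In detail, for the disconnecting gluing $\phi_{h,I}$ I use the fibre product $F=(\overline{\modm}_{h,|I|+1}\times\overline{\modm}_{g-h,|J|+1})\times_{\overline{\modm}_{g,n}}\overline{\modm}^{\rm spin}_{g,n}$ already appearing in the excerpt, together with its induced maps $q:F\to\overline{\modm}_{h,|I|+1}\times\overline{\modm}_{g-h,|J|+1}$ and $\phi_{h,I}:F\to\overline{\modm}^{\rm spin}_{g,n}$. Flat base change yields $\phi_{h,I}^*p_*\Omega_{g,n}=q_*\phi_{h,I}^*\Omega_{g,n}$, and Lemma~\ref{omeganat} converts the right side into $q_*\hat\nu^*(\pi_1^*\Omega_{h,|I|+1}\cdot\pi_2^*\Omega_{g-h,|J|+1})$. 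Since $q$ factors as $(p_h\times p_{g-h})\circ\hat\nu$, the pushpull identity $\hat\nu_*\hat\nu^*=\deg(\hat\nu)\cdot\mathrm{id}$ together with K\"unneth reduces this expression to $\deg(\hat\nu)\cdot\pi_1^*(p_h)_*\Omega_{h,|I|+1}\cdot\pi_2^*(p_{g-h})_*\Omega_{g-h,|J|+1}$. Multiplying by $2^n$ and reexpressing each $(p_h)_*\Omega$ via $\Theta$, the claim \eqref{glue} reduces to the arithmetic identity $2^n\cdot\deg(\hat\nu)=2^{|I|+1}\cdot 2^{|J|+1}=2^{n+2}$, i.e.\ $\deg(\hat\nu)=4$.

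For the irreducible gluing $\phi_{\text{irr}}$ the same reduction applies with one extra ingredient: the fibre product splits as $F^{(0)}\sqcup F^{(1)}$ according to whether $\lambda_{p_\pm}=\tfrac12$ or $0$, and the vanishing $\phi_{\text{irr},2}^*\Omega_{g,n}=0$ from Lemma~\ref{omeganat} annihilates the $F^{(1)}$-contribution. On $F^{(0)}$ the target of $\hat\nu$ is $\overline{\modm}^{\rm spin}_{g-1,n+2}$, and the analogous combinatorial check again becomes $\deg(\hat\nu)=4$.

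The main obstacle is thus the stacky degree computation $\deg(\hat\nu)=4$. The underlying coarse map is bijective, since a spin structure on $\cc$ is recovered uniquely up to isomorphism from its pullback to the normalisation (consistent with the set-theoretic count $2^{2g}=2^{2h}\cdot 2^{2(g-h)}$), so the factor of $4$ must be produced at the stack level. The two contributions should be the $\theta_i\to-\theta_i$ symmetries on the two normalised components (of which only the diagonal lifts to an automorphism of the glued spin curve) and the two-element set of compatible gluings of the fibres at the half-nodes coming from the solutions of $\mu^2=1$ in the residue-compatibility condition. Once this factor-of-$4$ is justified, both identities in property \eqref{glue} follow from the chain of formal manipulations above.
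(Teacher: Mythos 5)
Your proposal is correct and follows essentially the same route as the paper: both reduce property \eqref{glue} to the spin-level naturality of $\Omega_{g,n}$ from Lemma~\ref{omeganat}, commute the pull-back past $p_*$ through the fibre-product diagrams, and observe that the resulting factor of $\deg(\hat\nu)=4$ exactly matches $2^{n+2}=2^{|I|+1}\cdot 2^{|J|+1}$ against the $2^n$ in the definition of $\Theta_{g,n}$ (your explicit use of $\phi_{\text{irr},2}^*\Omega_{g,n}=0$ to kill the $F^{(1)}$ component is implicit in the paper's setup). The only difference is that the paper cites equation (39) of \cite{JKVMod} for the factor of $4$, whereas you sketch its stacky origin (two gluings at the node times the non-lifting $\bz_2\times\bz_2$ of sign automorphisms), which is consistent with that reference.
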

\begin{proof}
The two properties \eqref{glue} of $\Theta_{g,n}$, follow from the analogous properties for $\Omega_{g,n}$.  This uses the relationship between compositions of pull-backs and push-forwards in the following diagrams:
\begin{center}
\begin{tikzpicture}[scale=0.5]
\draw (0,0) node {$\overline{\modm}_{g-1,n+2}^{\rm spin}$};
\draw [->, line width=1pt,dashed] (2,0)--(4,0);
\draw (3,.5) node {$\phi_{\text{irr}}\circ\hat{\nu}^{-1}$};
\draw (3,-3.5) node {$\phi_{\text{irr}}$};
\draw (6,0) node {$\overline{\modm}_{g,n}^{\rm spin}$};
\draw [->, line width=1pt] (0,-1)--(0,-3);
\draw (-.5,-2) node {$p$};
\draw (0,-4) node {$\overline{\modm}_{g-1,n+2}$};
\draw [->, line width=1pt] (2,-4)--(4,-4);
\draw (6,-4) node {$\overline{\modm}_{g,n}$};
\draw [->, line width=1pt] (6,-1)--(6,-3);
\draw (5.5,-2) node {$p$};
\end{tikzpicture}
\hspace{3cm}
\begin{tikzpicture}[scale=0.5]
\draw (0,0) node {$\overline{\modm}_{h,|I|+1}^{\rm spin}\times \overline{\modm}_{g-h,|J|+1}^{\rm spin}$};
\draw [->, line width=1pt,dashed] (4,0)--(6,0);
\draw (5,.5) node {$\phi_{h,I}\circ\hat{\nu}^{-1}$};
\draw (5,-3.5) node {$\phi_{h,I}$};
\draw (8,0) node {$\overline{\modm}_{g,n}^{\rm spin}$};
\draw [->, line width=1pt] (0,-1)--(0,-3);
\draw (-.5,-2) node {$p$};
\draw (0,-4) node {$\overline{\modm}_{h,|I|+1}\times\overline{\modm}_{g-h,|J|+1}$};
\draw [->, line width=1pt] (4,-4)--(6,-4);
\draw (8,-4) node {$\overline{\modm}_{g,n}$};
\draw [->, line width=1pt] (8,-1)--(8,-3);
\draw (7.5,-2) node {$p$};
\end{tikzpicture}
\end{center}
where the broken arrows signify multiply-defined maps which are defined above using fibre products.

On cohomology, we have $\phi_{\text{irr}}^*p_*=2p_*\hat\nu_*\phi_{\text{irr}}^*$ and $\phi_{h,I}^*p_*=2p_*\hat\nu_*\phi_{h,I}^*$ where the factor of 2 is due to the degree of $\hat\nu$ ramification of $p$ and the isotropy of the orbifold divisor---see (39) in \cite{JKVMod}.  Hence 
\begin{align*}
\phi_{\text{irr}}^*\Theta_{g,n}&=\phi_{\text{irr}}^*p_*(-1)^n2^{g-1+n}\Omega_{g,n}=2p_*\hat\nu_*\phi_{\text{irr}}^*(-1)^n2^{g-1+n}\Omega_{g,n}\\
&=p_*(-1)^{n+2}2^{g+n}\Omega_{g-1,n+2}=\Theta_{g-1,n+2}
\end{align*}
and similarly $\phi_{h,I}^*\Theta_{g,n}=\pi_1^*\Theta_{h,|I|+1}\cdot\pi_2^* \Theta_{g-h,|J|+1}$ which uses 
\[2\cdot(-1)^n2^{g-1+n}=(-1)^n2^{g+n}=(-1)^{|I|+1}2^{h-1+|I|+1}(-1)^{|J|+1}2^{g-h-1+|J|+1}.\]
\end{proof}

\begin{remark}
The construction of $\Omega_{g,n}$ should also follow from the cosection construction in \cite{CLLWit} using the moduli space of spin curves with fields 
$$\overline{\modm}_{g,n}(\bz_2)^p=\{(C,\theta,\rho)\mid (C,\theta)\in\overline{\modm}_{g,n}^{\rm spin},\ \rho\in H^0(\theta)\}.$$
A cosection of the pull-back of $E_{g,n}$ to $\overline{\modm}_{g,n}(\bz_2)^p$ is given by $\rho^{-3}$ since it pairs well with $H^1(\theta)$---we have $\rho^{-3}\in H^0((\theta^\vee)^3)$ while $H^1(\theta)\cong H^0(\omega\otimes\theta^\vee)^\vee=H^0((\theta^\vee)^3)^\vee$.  Using the cosection $\rho^{-3}$ a virtual fundamental class is constructed in \cite{CLLWit} that likely gives rise to $\Omega_{g,n}\in H^{4g-4+2n}(\overline{\modm}_{g,n}^{\rm spin},\bq)$.  The virtual fundamental class is constructed away from the zero set of $\rho$.
\end{remark}

\section{Uniqueness}   \label{sec:unique}
The degree property \eqref{degree} of Theorem~\ref{main}, $\Theta_{g,n}\in H^{4g-4+2n}(\overline{\modm}_{g,n},\bq)$, proven below, implies the initial value
\[\Theta_{1,1}=\lambda\psi,\ \lambda\in\bq.
\]
It leads leads to uniqueness of intersection numbers $\displaystyle \int_{\overline{\modm}_{g,n}}\Theta_{g,n}\prod_{i=1}^n\psi_i^{m_i}\prod_{j=1}^N\kappa_{\ell_j}$ via a reduction argument, and consequently property \eqref{unique} of Theorem~\ref{main}.  The proofs in this section of properties \eqref{genus0}, \eqref{symmetric} and \eqref{unique} apply for any $\lambda\neq 0$.  We finish the section with a rigidity result given by Theorem~\ref{rigid} proving that necessarily $\lambda=3$.

We first prove the following lemma which will be needed later.
\begin{lemma}  \label{nonzero}
Properties \eqref{pure}-\eqref{base} imply that $\Theta_{g,n}\neq 0$ for $g>0$ and all $n$.
\end{lemma}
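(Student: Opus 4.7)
The plan is a two-stage induction using properties \eqref{forget}, \eqref{glue}, and the initial value \eqref{base}. Everything is bootstrapped from $\Theta_{1,1}=3\psi_1$, which is nonzero since $\int_{\overline{\modm}_{1,1}}\psi_1=\frac{1}{24}\neq 0$.

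First I would establish $\Theta_{1,n}\neq 0$ for all $n\geq 1$ by induction on $n$. Assuming $\Theta_{1,n}\neq 0$, I apply $\pi_*$ to the forgetful relation \eqref{forget}, and use the projection formula together with the standard identity $\pi_*\psi_{n+1}=2g-2+n$ (equivalently $\kappa_0=2g-2+n$) to obtain
$$\pi_*\Theta_{1,n+1}=\pi_*\bigl(\psi_{n+1}\cdot\pi^*\Theta_{1,n}\bigr)=(2g-2+n)\,\Theta_{1,n}=n\,\Theta_{1,n}.$$
For $n\geq 1$ the right-hand side is nonzero, so $\Theta_{1,n+1}\neq 0$.

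Second, I would induct on $g\geq 2$. For any $n\geq 0$ with $2g-2+n>0$, the boundary pullback property \eqref{glue} gives
$$\phi_{\text{irr}}^*\Theta_{g,n}=\Theta_{g-1,n+2}.$$
By the inductive hypothesis (stage one supplies $g-1=1$; the outer induction handles $g-1\geq 2$), the right-hand side is nonzero, and since pullbacks of the zero class are zero, $\Theta_{g,n}\neq 0$.

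I do not anticipate a serious obstacle. The only point worth flagging is that the genus one row must be handled separately via \eqref{forget}, since reducing via $\phi_{\text{irr}}$ from $\overline{\modm}_{0,n+2}$ would land in a stratum where $\Theta$ plausibly vanishes (consistent with \eqref{genus0}) and thus cannot serve as a base for the outer induction on $g$.
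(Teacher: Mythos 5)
Your proof is correct and follows essentially the same two-stage strategy as the paper: first establish the genus-one row from \eqref{forget} and \eqref{base}, then induct on $g$ by pulling back to a boundary divisor where nonvanishing is already known. The only cosmetic differences are that the paper verifies $\Theta_{1,n}\neq 0$ by writing $\Theta_{1,n}=3\psi_1\cdots\psi_n$ and computing $\int_{\overline{\modm}_{1,n}}\Theta_{1,n}=(n-1)!/8$ rather than via your pushforward identity, and in the inductive step it pulls back along the separating divisor $\overline{\modm}_{g-1,1}\times\overline{\modm}_{1,n+1}$ rather than along $\phi_{\text{irr}}$; both variants are equally valid.
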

\begin{proof}
We have $\Theta_{1,1}=a$ or $\Theta_{1,1}=a\psi_1$ for $a\neq 0$ by \eqref{pure} and \eqref{base}.   Using the pull-back property \eqref{forget} together with the equality $\psi_n\psi_i=\psi_n\pi^*\psi_i$ for $i<n$, we have $\Theta_{1,n}=a\psi_2...\psi_n$ or $\Theta_{1,n}=a\psi_1\psi_2...\psi_n$ hence $(1+\psi_1)\Theta_{1,n}=a\psi_1\psi_2...\psi_n$ and $\int_{\overline{\modm}_{g,n}}(1+\psi_1)\Theta_{1,n}=a(n-1)!/24$, proving $\Theta_{1,n}\neq 0$.  

Now we proceed by induction on $g$.  For the base case of $g=1$, we have $\Theta_{1,n}\neq 0$ for all $n>0$.  Assume $\Theta_{h,n}\neq 0$ for $0<h<g$ and all $n$.  For $g>1$, let $\Gamma$ be the stable graph consisting of a genus $g-1$ vertex attached by a single edge to a genus 1 vertex with $n$ labeled leaves (denoted {\em ordinary} leaves in Section~\ref{twloop}).  Then by \eqref{glue}
$$\phi_{\Gamma}^*\Theta_{g,n}=\Theta_{g-1,1}\otimes\Theta_{1,n+1}
$$
which is non-zero since $\Theta_{g-1,1}\neq 0$ by the inductive hypothesis and $\Theta_{1,n+1}\neq 0$ by the calculation above.
\end{proof}

\begin{proof}[Proof of \eqref{degree}]

Write \[d(g,n)=\text{degree}(\Theta_{g,n})\] which exists by \eqref{pure}.  Note that the degree here is half the cohomological degree so $\Theta_{g,n}\in H^{2d(g,n)}(\overline{\modm}_{g,n},\bq)$.  Using \eqref{glue}, $\phi_{\text{irr}}^*\Theta_{g,n}=\Theta_{g-1,n+2}$ implies that 
\[d(g,n)=d(g-1,n+2)\] 
since $\Theta_{g-1,n+2}\neq 0$ by Lemma~\ref{nonzero}.  Hence 
$d(g,n)=f(2g-2+n)$ 
is a function of $2g-2+n$.  Similarly, using \eqref{glue}, $\phi_{h,I}^*\Theta_{g,n}=\Theta_{h,|I|+1}\otimes \Theta_{g-h,|J|+1}$ implies that 
$f(a+b)=f(a)+f(b)=(a+b)f(1)$ since $\Theta_{h,|I|+1}\neq 0$ and $\Theta_{g-h,|J|+1}\neq 0$ again by Lemma~\ref{nonzero}.
Hence 
\[d(g,n)=(2g-2+n)k\] 
for an integer $k$.  But  $d(g,n)\leq 3g-3+n$ implies $k\leq 1$.  When $k=0$, this gives $\deg\Theta_{g,n}=0$ which contradicts  \eqref{forget} together with Lemma~\ref{nonzero} hence $k=1$ and $\deg\Theta_{g,n}=2g-2+n$.
\end{proof}
\begin{proof}[Proof of \eqref{genus0}]
This is an immediate consequence of \eqref{degree} since 
\[\deg\Theta_{0,n}=n-2>n-3=\dim\overline{\modm}_{0,n}
\] 
hence $\Theta_{0,n}=0$.  For any stable graph $\Gamma$ with a genus 0 vertex, Remark~\ref{gluestab} gives $\phi_{\Gamma}^*\Theta_{g,n}=\Theta_{\Gamma}=\prod_{v\in V(\Gamma)}\pi_v^*\Theta_{g(v),n(v)}=0$ since the genus 0 vertex contributes a factor of 0 to the product.
\end{proof}
\begin{proof}[Proof of \eqref{symmetric}]
Property \eqref{forget} implies that 
\[\Theta_{g,n}=\prod_{i=1}^n\psi_i\cdot\pi^*\Theta_g\] 
where $\pi:\overline{\modm}_{g,n}\to\overline{\modm}_{g}$ is the forgetful map.  Since $\pi^*\omega\in H^*(\overline{\modm}_{g,n},\bq)^{S_n}$ for any class $\omega\in H^*(\overline{\modm}_{g},\bq)$ and clearly $\prod_{i=1}^n\psi_i\in H^*(\overline{\modm}_{g,n},\bq)^{S_n}$ we have $\Theta_{g,n}\in H^*(\overline{\modm}_{g,n},\bq)^{S_n}$ as required.
\end{proof}
The proof of \eqref{unique} follows from the special case of the intersection of $\Theta_{g,n}$ with a polynomial in $\kappa$ and $\psi$ classes.
\begin{proposition}   \label{th:unique}
For any $\Theta_{g,n}$ satisfying properties \eqref{pure} - \eqref{forget}, the intersection numbers
\begin{equation}  \label{corr}
\int_{\overline{\modm}_{g,n}}\Theta_{g,n}\prod_{i=1}^n\psi_i^{m_i}\prod_{j=1}^N\kappa_{\ell_j}
\end{equation}
are uniquely determined from the initial condition $\Theta_{1,1}=\lambda\psi_1$ for $\lambda\in\bq$. \end{proposition}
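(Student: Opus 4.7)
The plan is a double induction---outer on $g$ and inner on $n$---using \eqref{forget} to decrease the number of marked points and \eqref{glue} together with tautological relations to decrease the genus.

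For the inner step, write $\Theta_{g,n+1}=\psi_{n+1}\pi^*\Theta_{g,n}$ by \eqref{forget}, where $\pi:\overline{\modm}_{g,n+1}\to\overline{\modm}_{g,n}$ is the forgetful map. The essential observation is that $\Theta_{g,n+1}\cdot D_{i,n+1}=0$ for each boundary divisor $D_{i,n+1}$ on which points $i$ and $n+1$ collide: the restriction to $D_{i,n+1}$ factors via \eqref{glue} through $\Theta_{0,3}$, which vanishes by \eqref{degree}. Hence $\psi_i$ may be swapped for $\pi^*\psi_i=\psi_i-D_{i,n+1}$ inside any integral against $\Theta_{g,n+1}$. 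Combining this with the standard identities $\pi^*\kappa_\ell=\kappa_\ell-\psi_{n+1}^\ell$ and $\pi_*\psi_{n+1}^{a+1}=\kappa_a$, the projection formula yields
\begin{equation*}
\int_{\overline{\modm}_{g,n+1}}\Theta_{g,n+1}\prod_{i=1}^{n+1}\psi_i^{m_i}\prod_{j=1}^N\kappa_{\ell_j}=\sum_{S\subseteq[N]}\int_{\overline{\modm}_{g,n}}\Theta_{g,n}\prod_{i=1}^n\psi_i^{m_i}\,\kappa_{m_{n+1}+\sum_{j\in S}\ell_j}\prod_{j\notin S}\kappa_{\ell_j}.
\end{equation*}
Iterating reduces every integral to the base case $n=1$ (for $g=1$) or $n=0$ (for $g\geq 2$). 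For $g=1$, the dimension constraint on $\overline{\modm}_{1,1}$ permits only the single integral $\int_{\overline{\modm}_{1,1}}\Theta_{1,1}=\lambda/24$, so all $g=1$ intersection numbers are determined by $\lambda$.

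For the outer induction on $g\geq 2$ I would assume uniqueness at genera $<g$. After the inner reduction one is left with integrals $\int_{\overline{\modm}_g}\Theta_g\prod\kappa_{\ell_j}$ subject to $\sum\ell_j=g-1$. Any such integral containing a boundary divisor factor reduces, via \eqref{glue} and the projection formula, to integrals of $\Theta_{g-1,2}$ or $\Theta_{h,1}\cdot\Theta_{g-h,1}$ over lower-genus moduli spaces, which are determined by the inductive hypothesis.

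The main obstacle is that $\prod\kappa_{\ell_j}$ is not itself a boundary class on $\overline{\modm}_g$, so the boundary factorisations alone do not close the induction. To finish, one needs tautological relations on $\overline{\modm}_{g,n}$ which, after intersection with $\Theta_{g,n}$, express the remaining $\kappa$-integrals as combinations of boundary-supported integrals. As flagged in the introduction, Pixton's 3-spin relations (Section~\ref{pixrel}) supply precisely such relations, and the resulting topological recursion relations suffice to reduce every remaining integral to the $(1,1)$ base value. The crux of the proof is thus isolated to the combinatorial verification that Pixton's relations, intersected with $\Theta_{g,n}$, yield enough independent constraints to determine every integral \eqref{corr} from the single number $\lambda$.
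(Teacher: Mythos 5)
Your proposal is correct and follows essentially the same route as the paper: push forward along forgetful maps using $\Theta_{g,n}=\psi_n\cdot\pi^*\Theta_{g,n-1}$ to reduce everything to $\kappa$-integrals on $\overline{\modm}_g$, then use tautological relations to express the degree $g-1$ $\kappa$-monomials as boundary classes and induct on genus via \eqref{glue}. The ``crux'' you isolate at the end is exactly the input the paper supplies by citing Looijenga's theorem \cite{LooTau} (strengthened by Pixton's relations \cite{PPZRel}), namely that every homogeneous degree $g-1$ monomial in the $\kappa$ classes on $\overline{\modm}_g$ equals a sum of boundary terms, so no further combinatorial verification is needed.
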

\begin{proof}
For $n>0$, we will push forward the integral \eqref{corr} via the forgetful map $\pi:\overline{\modm}_{g,n}\to\overline{\modm}_{g,n-1}$ as follows.  Consider first the case when there are no $\kappa$ classes.  The presence of $\psi_n$ in $\Theta_{g,n}=\psi_n\cdot\pi^*\Theta_{g,n-1}$ gives
$$\Theta_{g,n}\psi_k=\Theta_{g,n}\pi^*\psi_k,\quad k<n
$$
since $\psi_n\psi_k=\psi_n\pi^*\psi_k$ for $k<n$.  Hence
\begin{align*}
\int_{\overline{\modm}_{g,n}}\Theta_{g,n}\prod_{i=1}^n\psi_i^{m_i}&=
\int_{\overline{\modm}_{g,n}}\pi^*\Big(\Theta_{g,n-1}\prod_{i=1}^{n-1}\psi_i^{m_i}\Big)\psi_n^{m_n+1}\\
=\int_{\overline{\modm}_{g,n-1}}&\pi_*\left\{\pi^*\Big(\Theta_{g,n-1}\prod_{i=1}^{n-1}\psi_i^{m_i}\Big)\psi_n^{m_n+1}\right\}
=\int_{\overline{\modm}_{g,n-1}}\Theta_{g,n-1}\prod_{i=1}^{n-1}\psi_i^{m_i}\kappa_{m_n}
\end{align*}
so we have reduced an intersection number over $\overline{\modm}_{g,n}$ to an intersection number over $\overline{\modm}_{g,n-1}$.
In the presence of $\kappa$ classes, replace $\kappa_{\ell_j}$ by $\kappa_{\ell_j}=\pi^*\kappa_{\ell_j}+\psi_n^{\ell_j}$ and repeat the push-forward as above on all summands.  By induction, we see that for $g>1$
$$\int_{\overline{\modm}_{g,n}}\Theta_{g,n}\prod_{i=1}^n\psi_i^{m_i}\prod_{j=1}^N\kappa_{\ell_j}=\int_{\overline{\modm}_{g}}\Theta_{g}\cdot p(\kappa_1,\kappa_2,...,\kappa_{3g-3})$$
i.e. the intersection number \eqref{corr} reduces to an intersection number over $\overline{\modm}_{g}$ of $\Theta_g$ times a polynomial in the $\kappa$ classes.  When $g=1$, the right hand side is instead $\int_{\overline{\modm}_{1,1}}\Theta_{1,1}\cdot p$ for $p\in\bq$ a constant.

For $g>1$, by \eqref{degree} $\deg\Theta_g=2g-2$, so we may assume the polynomial $p$ consists only of terms of homogeneous degree $g-1$ (where $\deg\kappa_r=r$).  But by a result of Faber and Pandharipande \cite[Proposition 2]{FPaRel}, which strengthens Looijenga's theorem \cite{LooTau}, a homogeneous degree $g-1$ monomial in the $\kappa$ classes is equal in the tautological ring to the sum of boundary terms, i.e. the sum of push-forwards of polynomials in $\psi$ and $\kappa$ classes by the the maps $(\phi_\Gamma)_*$.  Such relations arise from Pixton's relations and are described algorithmically in \cite{CGJZPow}.  Now property \eqref{glue} of $\Theta_g$, shows that the pull-back of $\Theta_g$ to these boundary terms is $\Theta_{g',n'}$ for $g'<g$ so we have expressed \eqref{corr} as a sum of integrals of $\Theta_{g',n'}$ against $\psi$ and $\kappa$ classes.  By induction, one can reduce to the integral $\int_{\overline{\modm}_{1,1}}\Theta_{1,1}=\frac{\lambda}{24}$ and the proposition is proven.
\end{proof}
A consequence of Proposition~\ref{th:unique} is property \eqref{unique} of Theorem~\ref{main} stated as Corollary~\ref{intaut} below.  Let us first recall the definition of tautological classes in $H^*(\overline{\modm}_{g,n},\bq)$.  Dual to any point $(C,p_1,...,p_n)\in\overline{\modm}_{g,n}$ is its stable graph $\Gamma$ with vertices $V(\Gamma)$ representing irreducible components of $C$, internal edges representing nodal singularities and a (labeled) external edge for each $p_i$.  Each vertex is labeled by a genus $g(v)$ and has valency $n(v)$.  The genus of a stable graph is $g(\Gamma)=\displaystyle b_1(\Gamma)+\hspace{-3mm}\sum_{v\in V(\Gamma)}g(v)$.  

The {\em strata algebra} $S_{g,n}$ is a finite-dimensional vector space over $\bq$ with basis given by isomorphism classes of pairs $(\Gamma, \omega)$, for $\Gamma$ a stable graph of genus $g$ with $n$ external edges and $\omega\in H^*(\overline{\modm}_{\Gamma},\bq)$ a product of $\kappa$ and $\psi$ classes in each $\overline{\modm}_{g(v),n(v)}$ for each vertex $v\in V(\Gamma)$.  There is a natural map 
\[q:S_{g,n}\to H^*(\overline{\modm}_{g,n},\bq)\] 
defined by the push-forward $q(\Gamma, \omega)=\phi_{\Gamma}^*(\omega)\in H^*(\overline{\modm}_{g,n},\bq)$.  The map $q$ allows one to define a multiplication on $S_{g,n}$, essentially coming from intersection theory in $\overline{\modm}_{g,n}$, which can be described purely graphically.  The image $q(S_{g,n})\subset H^*(\overline{\modm}_{g,n},\bq)$ is the tautological ring $RH^*(\overline{\modm}_{g,n})$ and an element of the kernel of $q$ is a tautological relation.  See \cite[Section~0.3]{PPZRel} for a detailed description of $S_{g,n}$.  

\begin{corollary}  \label{intaut}
For all $\eta\in RH^*(\overline{\modm}_{g,n})$,
$ \displaystyle\int_{\overline{\modm}_{g,n}}\hspace{-3.5mm}\Theta_{g,n}\eta\in\bq$
is uniquely determined by properties \eqref{pure} - \eqref{forget} and \eqref{initial}. 
\end{corollary}
\begin{proof}
The tautological ring $RH^*(\overline{\modm}_{g,n})$ consists of polynomials in the classes $\kappa_i$, $\psi_i$ and boundary classes, which are pushforwards under $(\phi_\Gamma)_*$ of polynomials in $\kappa_i$ and $\psi_i$.  By the natural restriction property \eqref{glue} satisfied by $\Theta_{g,n}$, 
given a monomial in $\kappa$ and $\psi$ classes $\omega\in H^*(\overline{\modm}_{\Gamma},\bq)$,
\[  \int_{\overline{\modm}_{g,n}}\Theta_{g,n}\cdot(\phi_{\Gamma})_*(\omega)=\int_{\overline{\modm}_{\Gamma}}\phi_{\Gamma}^*(\Theta_{g,n})\cdot\omega=\int_{\overline{\modm}_{\Gamma}}\Theta_{\Gamma}\cdot\omega=\frac{1}{|\text{Aut}\Gamma|}\prod_{v\in\Gamma}w(v)
\]
The final term is a product over the vertices of $\Gamma$ of intersections $\Theta$ classes with monomials in $\kappa$ and $\psi$ classes
$w(v)=\int_{\overline{\modm}_{g(v),n(v)}}\Theta_{g(v),n(v)}\cdot\prod_{i=1}^{n(v)}P_v(\{\psi_i,\kappa_j\})$
which by Proposition~\ref{th:unique} are uniquely determined by \eqref{pure} - \eqref{forget} and \eqref{initial}.
\end{proof}
\begin{remark}   \label{removekappa}
The intersection numbers $\int_{\overline{\modm}_{g,n}}\Theta_{g,n}\prod_{i=1}^n\psi_i^{m_i}\prod_{j=1}^N\kappa_{\ell_j}$ can be calculated algorithmically from the intersection numbers $\int_{\overline{\modm}_{g,n}}\Theta_{g,n}\prod_{i=1}^n\psi_i^{m_i}$ with no $\kappa$ classes.  This essentially reverses the reduction shown in the proof of Proposition~\ref{th:unique}.  Explicitly,
for $\pi:\overline{\modm}_{g,n+N}\to\overline{\modm}_{g,n}$ and ${\bf m}=(m_1,...,m_N)$ define a polynomial in $\kappa$ classes by 
$$R_{\bf m}(\kappa_1,\kappa_2,...)=\pi_*\Big(\psi_{n+1}^{m_1+1}...\psi_{n+N}^{m_N+1}\Big)$$
so, for example, $R_{(m_1,m_2)}=\kappa_{m_1}\kappa_{m_1}+\kappa_{m_1+m_2}$.
Then
\begin{align}  \label{thetakappa}
\Theta_{g,n}\cdot R_{\bf m}&=\Theta_{g,n}\cdot \pi_*\Big(\psi_{n+1}^{m_1+1}...\psi_{n+N}^{m_N+1}\Big)
=\pi_*\Big(\pi^*\Theta_{g,n}\cdot\psi_{n+1}^{m_1+1}...\psi_{n+N}^{m_N+1}\Big)\\
&=\pi_*\Big(\Theta_{g,n+N}\cdot\psi_{n+1}^{m_1}...\psi_{n+N}^{m_N}\Big). \nonumber
\end{align}
The polynomials $R_{\bf m}(\kappa_1,\kappa_2,...)$ generate all polynomials in the $\kappa_i$ so \eqref{thetakappa} can be used to remove any $\kappa$ class.
\end{remark}
The following example demonstrates Proposition~\ref{th:unique} with an explicit genus 2 relation.
\begin{example}  \label{gen2rel}
A genus two relation proven by Mumford \cite[equation (8.5)]{MumTow}, relating $\kappa_1$ and the divisors defined by the double covers ${\overline{\modm}_{1,1}}\times{\overline{\modm}_{1,1}}\to\modm_{\Gamma_1}$ and ${\overline{\modm}_{1,2}}\to\modm_{\Gamma_2}$ in $\overline{\modm}_{2}$, labeled by stable graphs $\Gamma_i$ is given by
$$\kappa_1-\frac{7}{5}[\modm_{\Gamma_1}]-\frac{1}{5}[\modm_{\Gamma_2}]=0$$
which induces the relation 
$$\Theta_{2}\cdot\kappa_1-\frac{7}{5}\Theta_{2}\cdot[\modm_{\Gamma_1}]-\frac{1}{5}\Theta_{2}\cdot[\modm_{\Gamma_2}]=0.$$ 
Property \eqref{glue} of $\Theta_{g,n}$ yields
\begin{align*}
\int_{\overline{\modm}_{2}}\Theta_{2}\cdot[\modm_{\Gamma_1}]&=\int_{\modm_{\Gamma_1}}\phi_{\Gamma_1}^*\Theta_{2}=\int_{\overline{\modm}_{1,1}}\Theta_{1,1}\cdot\int_{\overline{\modm}_{1,1}}\Theta_{1,1}\cdot\frac{1}{|\text{Aut}(\Gamma_1)|}\\
\int_{\overline{\modm}_{2}}\Theta_{2}\cdot[\modm_{\Gamma_2}]&=\int_{\modm_{\Gamma_2}}\phi_{\Gamma_2}^*\Theta_{2}=\int_{\overline{\modm}_{1,2}}\Theta_{1,2}\cdot\frac{1}{|\text{Aut}(\Gamma_2)|}
\end{align*}
hence the relation on the level of intersection numbers is given by
\begin{equation*}  
\int_{\overline{\modm}_{2}}\Theta_{2}\cdot\kappa_1-\frac{7}{5}\cdot\int_{\overline{\modm}_{1,1}}\Theta_{1,1}\cdot\int_{\overline{\modm}_{1,1}}\Theta_{1,1}\cdot\frac{1}{|\text{Aut}(\Gamma_1)|}-\frac{1}{5}\cdot\int_{\overline{\modm}_{1,2}}\Theta_{1,2}\cdot\frac{1}{|\text{Aut}(\Gamma_2)|}=0.
\end{equation*}
We have $\int_{\overline{\modm}_{1,1}}\Theta_{1,1}=\frac{\lambda}{24}=\int_{\overline{\modm}_{1,2}}\Theta_{1,2}$ from \eqref{forget}, and $|\text{Aut}(\Gamma_1)|=\hspace{-.5mm}2\hspace{-.5mm}=|\text{Aut}(\Gamma_2)|$.
Hence
\begin{align*}  
\int_{\overline{\modm}_{2}}\Theta_{2}\cdot\kappa_1 &=\frac{7}{5}\cdot\int_{\overline{\modm}_{1,1}}\Theta_{1,1}\cdot\int_{\overline{\modm}_{1,1}}\Theta_{1,1}\cdot\frac{1}{|\text{Aut}(\Gamma_1)|}+\frac{1}{5}\cdot\int_{\overline{\modm}_{1,2}}\Theta_{1,2}\cdot\frac{1}{|\text{Aut}(\Gamma_2)|}\\&=\frac{7}{5}\cdot\left(\frac{\lambda}{24}\right)^2\cdot\frac{1}{2}+\frac{1}{5}\cdot\frac{\lambda}{24}\cdot\frac{1}{2}=\frac{7\lambda^2+24\lambda}{5760}. \nonumber
\end{align*} 
\end{example}
Until now $\Theta_{1,1}=\lambda\psi_1$ for any non-zero $\lambda\in\bq$.  The following theorem proves the rigidity condition \eqref{initial} that $\lambda=3$.  The proof of the theorem relies on the fact that for low genus and small $n$, the cohomology is tautological.  This allows us to work in the tautological ring in order to construct $\Theta_{g,n}$ from properties \eqref{pure} - \eqref{base}. 
\begin{theorem}   \label{rigid}
Let $\Theta_{g,n}\in H^*(\overline{\modm}_{g,n},\bq)$ satisfy \eqref{pure} - \eqref{base} and set the initial condition to be $\Theta_{1,1}=\lambda\psi_1\neq0$.  Then $\lambda=3$.
\end{theorem}
\begin{proof}
The existence proof in Section~\ref{existence} shows that $\lambda=3$ is possible but it does not exclude other values.  The strategy of proof of this theorem is to attempt to construct classes, beginning with the initial condition $\Theta_{1,1}=\lambda\psi_1$.  Importantly, condition \eqref{forget} determines $\Theta_{g,n}$ for all $n>0$ uniquely from $\Theta_g$ so the main calculation occurs over $\overline {\modm}_g$.  We consider classes in $RH^{2g-2}(\overline {\modm}_g)$ since for small values of $g$ it is known that $H^{2*}(\overline{\modm})_g=RH^*(\overline{\modm}_{g})$.
The essential idea is as follows.  
A class $\Theta_g\in H^{2g-2}(\overline{\modm}_{g},\bq)$ pulls back under boundary maps to $\Theta_{g-1,2}$ and $\Theta_{g-1,1}\otimes\Theta_{1,1}$.  The relationship 
\[\Theta_{g-1,2}=\psi_2\pi^*\Theta_{g-1,1}
\]
constrains the class $\Theta_g$.  We find that $\Theta_2$ exists (and hence also $\Theta_{2,n}$ exists for all $n$) for all $\lambda\in\bq$, but that $\Theta_3$ (and $\Theta_{3,n}$) exists only for $\lambda=3$ or $\lambda=-11/15$.  The existence of $\Theta_4$ constrains $\lambda$ further, allowing only $\lambda=3$.\\

\noindent $\mathbf{g=1}$.  From $\Theta_{1,1}=\lambda\psi_1$, condition \eqref{forget} yields
\[\Theta_{1,n}=\lambda\psi_1\psi_2...\psi_n
\]
since $\psi_n\psi_j=\psi_n\pi^*\psi_j$ for any $j<n$.\\

\noindent $\mathbf{g=2}$. The cohomology group $H^4(\overline{\modm}_{2},\bq)$ has basis $\{\kappa_1^2,\kappa_2\}$.  Set $\Theta_2=a_{11}\kappa_1^2+a_2\kappa_2$ and deduce $a_{11}$ and $a_2$ from restriction to $\modm_{\Gamma_i}\subset\overline{\modm}_{2}$ for $i=1,2$ defined in Example~\ref{gen2rel}.  Since $\kappa_2\cdot\modm_{\Gamma_1}=0$ we deduce that $a_{11}=\frac12\lambda^2$ and restriction to $\modm_{\Gamma_2}$ then uniquely determines
\[\Theta_2=\tfrac12\lambda^2\kappa_1^2+(\lambda-\tfrac32\lambda^2)\kappa_2. 
\]
Commutativity of the boundary maps with the forgetful map shown in the diagrams below implies that the classes $\Theta_{2,n}=\psi_1...\psi_n\pi^*\Theta_2$ restrict consistently to the boundary to give the correct genus 1 classes $\Theta_{1,n'}$ for all $\lambda\in\bq$.
\begin{center}
\begin{tikzpicture}[scale=0.5]
\draw (0,0) node {$\overline{\modm}_{g-1,n+2}$};
\draw [->, line width=1pt] (2,0)--(4,0);
\draw (3,.5) node {$\phi_{\text{irr}}$};
\draw (3,-3.5) node {$\phi_{\text{irr}}$};
\draw (6,0) node {$\overline{\modm}_{g,n}$};
\draw [->, line width=1pt] (0,-1)--(0,-3);
\draw (-.5,-2) node {$\pi$};
\draw (0,-4) node {$\overline{\modm}_{g-1,2}$};
\draw [->, line width=1pt] (2,-4)--(4,-4);
\draw (6,-4) node {$\overline{\modm}_{g}$};
\draw [->, line width=1pt] (6,-1)--(6,-3);
\draw (5.5,-2) node {$\pi$};
\draw (0+\shift,0) node {$\overline{\modm}_{h,|I|+1}\times \overline{\modm}_{g-h,|J|+1}$};
\draw [->, line width=1pt] (4+\shift,0)--(6+\shift,0);
\draw (5+\shift,.5) node {$\phi_{h,I}$};
\draw (5+\shift,-3.5) node {$\phi_{h}$};
\draw (8+\shift,0) node {$\overline{\modm}_{g,n}$};
\draw [->, line width=1pt] (0+\shift,-1)--(0+\shift,-3);
\draw (-.5+\shift,-2) node {$\pi$};
\draw (0+\shift,-4) node {$\overline{\modm}_{h,1}\times\overline{\modm}_{g-h,1}$};
\draw [->, line width=1pt] (4+\shift,-4)--(6+\shift,-4);
\draw (8+\shift,-4) node {$\overline{\modm}_{g}$};
\draw [->, line width=1pt] (8+\shift,-1)--(8+\shift,-3);
\draw (7.5+\shift,-2) node {$\pi$};
\end{tikzpicture}
\end{center}

\noindent $\mathbf{g=3}$. In genus 3, $H^{2*}(\overline{\modm}_3,\bq)=RH^*(\overline{\modm}_{3})$ due to the calculation of the cohomology $H^*(\overline{\modm}_3,\bq)$, for example by using the calculation of  $H^*(\overline{\modm}_{3,1},\bq)$ in \cite{GloHod}, together with the calculation of the tautological ring $RH^*(\overline{\modm}_{3},\bq)$ via Pixton's relations \cite{PPZRel} implemented using the Sage package admcycles \cite{DSZadm}.  We have $\dim RH^4(\overline{\modm}_{3},\bq)=7$ and we write $\Theta_{3}$ as a general linear combination of basis vectors in $RH^4(\overline{\modm}_{3})$:
\[\Theta_{3}=a_{1111}\kappa_1^4+a_{112}\kappa_1^2\kappa_2+a_{13}\kappa_1\kappa_3+a_{22}\kappa_2^2+a_4\kappa_4+b_1B_1+b_2B_2
\]
where $B_i\in RH^4(\overline{\modm}_3)$ are given by $B_1=$
\begin{tikzpicture}[thick,scale=.6, every node/.style={transform shape}]
\draw (0,0) node [shape=circle,draw]        {1};
\draw  (.3,0) -- (1.7,0);
\draw (2,0) node [shape=circle,draw]        {2};
\draw (0,.5) node     {$\kappa_1$};
\draw (2,.5) node     {$\kappa_2$};
\end{tikzpicture}
and $B_2=$
\begin{tikzpicture}[thick,scale=.6, every node/.style={transform shape}]
\draw (0,0) node [shape=circle,draw]        {1};
\draw  (.3,0) -- (1.7,0);
\draw (2,0) node [shape=circle,draw]        {2};
\draw (2,.5) node     {$\kappa_3$};
\end{tikzpicture}.
The following pull-back map is injective
\[
RH^{4}(\overline{\modm}_{3})\to RH^{4}(\overline {\modm}_{2,2})\oplus RH^3(\overline {\modm}_{2,1})\otimes RH^1(\overline{\modm}_{1,1})
\]
(which implies that the map from $RH^{4}(\overline{\modm}_{3})$ to the boundary is injective).  The restriction map 
\[RH^{4}(\overline{\modm}_{3})\to RH^{4}(\overline {\modm}_{2,2})
\] 
has 2-dimensional kernel and is surjective onto the $S_2$-invariant part of $RH^{4}(\overline {\modm}_{2,2})$.  Hence the condition 
\[\phi_{\text{irr}}^*\Theta_3=\Theta_{2,2}=\psi_1\psi_2\pi^*\Theta_{2}\] 
determines $\Theta_3$ up to parameters $s,t\in\bq$:
\begin{align*}
a_{1111}&=s\\
a_{112}& = \tfrac{11}{10}\lambda+\tfrac{17}{15}\lambda^2-18s+\tfrac{4}{3}t\\ 
a_{13} &= -12\lambda-12\lambda^2+104s-13t\\
a_{22} &= -\tfrac{33}{10}\lambda-\tfrac{29}{10}\lambda^2+27s-5t\\ 
a_4 &= \tfrac{376}{5}\lambda+\tfrac{1933}{30}\lambda^2-426s+\tfrac{250}{3}t\\ 
b_1&=t\\
b_2 & = \tfrac{2}{5}\lambda(3-\lambda)
\end{align*}
The pull-back map
\[
RH^{4}(\overline{\modm}_{3})\to  RH^3(\overline {\modm}_{2,1})\otimes RH^1(\overline{\modm}_{1,1})
\]
has three dimensional image, and the condition 
\[\phi_\Gamma^*\Theta_3=\Theta_{2,1}\otimes\Theta_{1,1}=(\psi_1\pi^*\Theta_{2})\otimes(\lambda\psi_1)\] 
cannot be satisfied with only two parameters in $\Theta_3$ for general $\lambda$, forcing $\lambda$ to  satisfy a polynomial relation.  We find
\begin{align*}
a_{1111}&=\tfrac{5}{24}\lambda^3-\tfrac{19}{120}\lambda^2-\tfrac{11}{40}\lambda\\
a_{112}& = \tfrac{5}{4}\lambda^3-\tfrac{147}{20}\lambda^2-\tfrac{99}{20}\lambda\\ 
a_{13} &= \tfrac{403}{24}\lambda^3-\tfrac{209}{12}\lambda^2-\tfrac{239}{8}\lambda=a_{13}-\tfrac{3108}{53}b_1\\
a_{22} &= -\tfrac{3867}{212}\lambda^3+\tfrac{99471}{2120}\lambda^2+\tfrac{22143}{530}\lambda=a_{22}+12b_1\\
a_4 &=-\tfrac{115}{2}\lambda^3+\tfrac{1221}{20}\lambda^2+\tfrac{618}{5}\lambda\\ 
b_1&=\tfrac{1}{40}\lambda(\lambda-3)(15\lambda+11)\\
b_2 & = \tfrac{2}{5}\lambda(3-\lambda)
\end{align*}
which is consistent only when $b_1=0$ hence
\[\lambda(\lambda-3)(15\lambda+11)=0.\]

\noindent $\mathbf{g=4}$. In genus 4, $H^{2*}(\overline{\modm}_4)=RH^*(\overline{\modm}_{4})$ is due to the calculation by Bergstr{\"o}m and Tommasi \cite{BToRat} of the Hodge polynomial of $\overline{\modm}_4$ together with the calculation of the tautological ring $RH^*(\overline{\modm}_{4})$ via Pixton's relations using admcycles \cite{DSZadm}.  We choose a general element $\theta_4\in RH^{6}(\overline{\modm}_{4})$ which is a linear combination of basis vectors for the 32 dimensional space $RH^{6}(\overline{\modm}_{4})$.  The pull-back map of $RH^{6}(\overline{\modm}_{4})$ to the boundary can be shown to be injective using admcycles.

The main purpose of the $g=4$ calculation is to prove that $\lambda=-\frac{11}{15}$ is impossible, so we substitute  $\lambda=-\frac{11}{15}$ into $\Theta_3$ above to get
\[\Theta_{3}=\tfrac{2783}{81000}\kappa_1^4-\tfrac{11011}{13500}\kappa_1^2\kappa_2+\tfrac{59939}{10125}\kappa_1\kappa_3+\tfrac{16093}{9000}\kappa_2^2-\tfrac{474287}{13500}\kappa_4-\tfrac{1232}{1125}B_2
\]
As in the $g=3$ case above we consider the pull-back map
\[
RH^{6}(\overline{\modm}_{4})\to RH^{6}(\overline {\modm}_{3,2})\]
which has a six dimensional kernel.  The $S_2$-invariant part of $H^{12}(\overline {\modm}_{3,2})$ is proven in \cite{BerCoh} to be 31 dimensional, and using admcycles it can be shown to be tautological. The condition $\phi_{\text irr}^*\Theta_4=\Theta_{3,2}=\psi_1\psi_2\pi^*\Theta_{3}$ produces a system of 31 equations in 32 unknowns.  Using admcycles, we find that $\Theta_{3,2}$ lies in the image of the pull-back map, and constrains $\Theta_4$ to depend linearly on 6 parameters.  The pull-back map composed with projection
\[RH^{6}(\overline{\modm}_{4})\to RH^{5}(\overline {\modm}_{3,1})\otimes RH^{1}(\overline {\modm}_{1,1})\]
uniquely determines the 6 parameters and finally the resulting class $\Theta_4$ is shown under
pull-back map composed with projection
\[RH^{6}(\overline{\modm}_{4})\to RH^{3}(\overline {\modm}_{2,1})\otimes RH^{3}(\overline {\modm}_{2,1})\]
to disagree with $\Theta_{2,1}\otimes\Theta_{2,1}$.  We conclude that $\lambda=-11/15$ is impossible, leaving $\lambda=3$.

\end{proof}

\section{Cohomological field theories}  \label{sec:cohft}

The class $\Theta_{g,n}$ combines with known enumerative invariants, such as Gromov-Witten invariants, to give rise to new invariants.  More generally, $\Theta_{g,n}$ pairs with any cohomological field theory, which is fundamentally related to the moduli space of curves $\overline{\modm}_{g,n}$, retaining many of the properties of the cohomological field theory, and is in particular often calculable.

A {\em cohomological field theory} is a pair $(H,\eta)$ composed of a finite-dimensional complex vector space $H$ equipped with a symmetric, bilinear, nondegenerate form, or metric, $\eta$ and a sequence of $S_n$-equivariant maps.  Many CohFTs are naturally defined on $H$ defined over $\bq$, nevertheless we use $\bc$ in order to relate them to Frobenius manifolds, and to use normalised canonical coordinates, defined later.
\[ \Omega_{g,n}:H^{\otimes n}\to H^*(\overline{\modm}_{g,n},\bc)\]
that satisfy compatibility conditions from inclusion of strata: 
$$\phi_{\text{irr}}:\overline{\modm}_{g-1,n+2}\to\overline{\modm}_{g,n},\quad \phi_{h,I}:\overline{\modm}_{h,|I|+1}\times\overline{\modm}_{g-h,|J|+1}\to\overline{\modm}_{g,n},\  I\sqcup J=\{1,...,n\}$$
given by
\begin{align}
\phi_{\text{irr}}^*\Omega_{g,n}(v_1\otimes...\otimes v_n)&=\Omega_{g-1,n+2}(v_1\otimes...\otimes v_n\otimes\Delta) 
  \label{glue1}
\\
\phi_{h,I}^*\Omega_{g,n}(v_1\otimes...\otimes v_n)&=\Omega_{h,|I|+1}\otimes \Omega_{g-h,|J|+1}\big(\bigotimes_{i\in I}v_i\otimes\Delta\otimes\bigotimes_{j\in J}v_j\big) \label{glue2}
\end{align}
where $\Delta\in H\otimes H$ is dual to $\eta\in H^*\otimes H^*$.  When $n=0$, $\Omega_g:=\Omega_{g,0}\in H^*(\overline{\modm}_{g},\bc)$.  The CohFT has {\em flat identity} if there exists a vector $\un\in H$ compatible with the forgetful map $\pi:\overline{\modm}_{g,n+1}\to\overline{\modm}_{g,n}$ by
\begin{equation}   \label{cohforget}
\Omega_{g,n+1}(\un\otimes v_1\otimes...\otimes v_n)=\pi^*\Omega_{g,n}(v_1\otimes...\otimes v_n)
\end{equation}
for $2g-2+n>0$,  and
$$\Omega_{0,3}(\un\otimes v_1\otimes v_2)=\eta(v_1,v_2).
$$

For a one-dimensional CohFT, i.e. $\dim H=1$, identify $\Omega_{g,n}$ with the image $\Omega_{g,n}(\un^{\otimes n})$, so we write $\Omega_{g,n}\in H^*(\overline{\modm}_{g,n},\bc)$.  A trivial example of a CohFT is $\Omega_{g,n}=1\in H^0(\overline{\modm}_{g,n},\bc)$ which is a topological field theory as we now describe.

A two-dimensional topological field theory (TFT) is a vector space $H$ and a sequence of symmetric linear maps
\[ \Omega^0_{g,n}:H^{\otimes n}\to \bc\]
for integers $g\geq 0$ and $n>0$ satisfying the following conditions.  The map $\Omega^0_{0,2}=\eta$ defines a symmetric, bilinear, nondegenerate form $\eta$, and together with $\Omega^0_{0,3}$ it defines a
product $\Cdot$ on $H$ via
\begin{equation}   \label{prod}
\eta(v_1\Cdot v_2,v_3)=\Omega^0_{0,3}(v_1,v_2,v_3)
\end{equation}
with identity $\un$ given by the dual of $\Omega^0_{0,1}=\un^*=\eta(\un,\cdot)$.  It satisfies 
$$\Omega^0_{g,n+1}(\un\otimes v_1\otimes...\otimes v_n)=\Omega^0_{g,n}(v_1\otimes...\otimes v_n)$$ 
and the gluing conditions 
\begin{align*}
\Omega^0_{g,n}(v_1\otimes...\otimes v_n)&=\Omega^0_{g-1,n+2}(v_1\otimes...\otimes v_n\otimes\Delta)\\
\Omega^0_{g,n}(v_1\otimes...\otimes v_n)&=\Omega^0_{g_1,|I|+1}\otimes \Omega^0_{g_2,|J|+1}\big(\bigotimes_{i\in I}v_i\otimes\Delta\otimes\bigotimes_{j\in J}v_j\big)
\end{align*}
for $g=g_1+g_2$ and $I\sqcup J=\{1,...,n\}$.

Consider the natural isomorphism $H^0({\overline{\modm}_{g,n})}\cong\bc$.  The degree zero part of a CohFT $\Omega_{g,n}$ is a TFT: 
$$\Omega^0_{g,n}:H^{\otimes n}\stackrel{\Omega_{g,n}}{\to} H^*({\overline{\modm}_{g,n}},\bc)\to H^0({\overline{\modm}_{g,n}},\bc).
$$
We often write $\Omega_{0,3}=\Omega^0_{0,3}$ interchangeably.  Associated to $\Omega_{g,n}$ is the product \eqref{prod} built from $\eta$ and $\Omega_{0,3}$.

\begin{remark}  \label{theco}
The classes $\Theta_{g,n}$ satisfy properties~\eqref{glue1} and \eqref{glue2} of a one-dimensional CohFT.  In place of property~\eqref{cohforget}, they satisfy $\Theta_{g,n+1}(\un\otimes v_1\otimes...\otimes v_n)=\psi_{n+1}\cdot\pi^*\Theta_{g,n}(v_1\otimes...\otimes v_n)$ and $\Theta_{0,3}=0$.
\end{remark}

 The product defined in \eqref{prod} is {\em semisimple} if it is diagonal $H\cong\bc\oplus\bc\oplus...\oplus\bc$, i.e. there is a canonical basis $\{ u_1,...,u_N\}\subset H$ such that $u_i\cdot u_j=\delta_{ij}u_i$.  The metric is then necessarily diagonal with respect to the same basis, $\eta(u_i,u_j)=\delta_{ij}\eta_i$ for some $\eta_i\in\bc \setminus \{0\}$, $i=1,...,N$.  The Givental-Teleman theorem described in Section~\ref{givental} gives a construction of semisimple CohFTs.

\subsection{Cohomological field theories coupled to \texorpdfstring{$\Theta_{g,n}$}{THgn}}
\begin{definition}  \label{cohftheta}
For any CohFT $\Omega$ defined on $(H,\eta)$ define $\Omega^\Theta=\{\Omega^\Theta_{g,n}\}$ to be the sequence of $S_n$-equivariant maps $\Omega^\Theta_{g,n}:H^{\otimes n}\to H^*(\overline{\modm}_{g,n},\bc)$ given by 
\[\Omega^\Theta_{g,n}(v_1\otimes...\otimes v_n):=\Theta_{g,n}\cdot\Omega_{g,n}(v_1\otimes...\otimes v_n).\]   
\end{definition}
This is essentially the tensor products of CohFTs, albeit involving $\Theta_{g,n}$.  The tensor products of CohFTs is obtained as above by cup product on $H^*(\overline{\modm}_{g,n},\bc)$, generalising Gromov-Witten invariants of target products and the K\"unneth formula $H^*(X_1\times X_2)\cong H^*X_1\otimes H^*X_2$.  

Generalising Remark~\ref{theco}, $\Omega^\Theta_{g,n}$ satisfies properties~\eqref{glue1} and \eqref{glue2} of a CohFT on $(H,\eta)$.  In place of property~\eqref{cohforget}, it satisfies 
$$\Omega^\Theta_{g,n+1}(\un\otimes v_1\otimes...\otimes v_n)=\psi_{n+1}\cdot\pi^*\Omega^\Theta_{g,n}(v_1\otimes...\otimes v_n)$$ and $\Omega^\Theta_{0,3}=0$.

Given a CohFT $\Omega=\{\Omega_{g,n}\}$, or a more general collection of classes such as $\Omega=\{\Omega^\Theta_{g,n}\}$,  and a basis $\{e_1,...,e_N\}$ of $H$, the partition function of $\Omega$ is defined by:
\begin{equation}   \label{partfun}
Z_{\Omega}(\hbar,\{t^{\alpha}_k\})=\exp\sum_{g,n,\vec{k}}\frac{\hbar^{g-1}}{n!}\int_{\overline{\modm}_{g,n}}\Omega_{g,n}(e_{\alpha_1}\otimes...\otimes e_{\alpha_n})\cdot\prod_{j=1}^n\psi_j^{k_j}\prod t^{\alpha_j}_{k_j}
\end{equation}
for $\alpha_i\in\{1,...,N\}$ and $k_j\in\bn$.  For $\dim H=1$ and $\Omega_{g,n}=1\in H^*(\overline{\modm}_{g,n},\bc)$, its partition function is $Z_{\Omega}(\hbar,\{t_k\})=Z^{\text{KW}}(\hbar,\{t_k\})$ which is defined in Section~\ref{sec:kdv}.  

For $\Omega_{g,n}=\Theta_{g,n}\in H^*(\overline{\modm}_{g,n},\bc)$, $Z_{\Omega}(\hbar,\{t_k\})=Z^{\Theta}(\hbar,\{t_k\})$ gives its partition function.  Property~\eqref{forget} is realised by the following homogeneity property:
\begin{equation}  \label{dilaton}
\frac{\partial}{\partial t_0}Z^{\Theta}(\hbar,t_0,t_1,...)=\sum_{i=0}^{\infty}(2i+1)t_i \frac{\partial}{\partial t_i}Z^{\Theta}(\hbar,t_0,t_1,...)+\frac18Z^{\Theta}(\hbar,t_0,t_1,...)
\end{equation}
 and for $Z^{\Theta}(\hbar,t_0,t_1,...)$ in the following proposition.
\begin{proposition}  \label{th:dilaton}
The function $Z^{\Theta}(\hbar,t_0,t_1,...)$ is homogeneous of degree $-\frac18$ with respect to $\{q=1-t_0,t_1,t_2,...\}$ with $\deg q=1$ and $\deg t_i=2i+1$ for $i>0$.  Equivalently it satisfies the dilaton equation \eqref{dilaton}.
\end{proposition}
\begin{proof}
We have
\begin{align*}  
\int_{\overline{\modm}_{g,n+1}}\Theta_{g,n+1}\cdot\prod_{j=1}^n\psi_j^{k_j}&=\int_{\overline{\modm}_{g,n+1}}\pi^*\Theta_{g,n}\cdot\psi_{n+1}\cdot\prod_{j=1}^n\psi_j^{k_j}\\
=\int_{\overline{\modm}_{g,n+1}}&\pi^*\Theta_{g,n}\cdot\psi_{n+1}\cdot\prod_{j=1}^n\pi^*\psi_j^{k_j}=\int_{\overline{\modm}_{g,n}}\Theta_{g,n}\cdot\prod_{j=1}^n\psi_j^{k_j}\cdot\pi_*\psi_{n+1}\\&=
(2g-2+n)\int_{\overline{\modm}_{g,n}}\Theta_{g,n}\cdot\prod_{j=1}^n\psi_j^{k_j}.\nonumber
\end{align*}
which uses $\psi_{n+1}\cdot\psi_j=\psi_{n+1}\cdot\pi^*\psi_j$ for $j=1,...,n$ and $\pi_*(\pi^*\omega\cdot\psi_{n+1})=\omega\cdot\pi_*\psi_{n+1}$.  In terms of the partition function $Z^{\Theta}(\hbar,t_0,t_1,...)$, this is realised by the equation \eqref{dilaton}.
\end{proof}

\subsubsection{Gromov-Witten invariants}
Let $X$ be a projective algebraic variety and consider $(C,x_1,\dots,x_n)$ a connected smooth curve of genus $g$ with $n$ distinct marked points.  For $\beta \in H_2(X,\bz)$ the moduli space of stable maps $\overline{\modm}_{g,n}(X,\beta)$ is defined by:
$$\overline{\modm}_{g,n}(X,\beta)=\{(C,x_1,\dots,x_n)\stackrel{\pi}{\rightarrow} X\mid \pi_\ast [C]=\beta\}/\sim$$
where $\pi$ is a morphism from a connected nodal curve $C$ containing distinct points $\{x_1,\dots,x_n\}$ that avoid the nodes.  Any genus zero irreducible component of $C$ with fewer than three distinguished points (nodal or marked), or genus one irreducible component of $C$ with no distinguished point, must not be collapsed to a point.  We quotient by isomorphisms of the domain $C$ that fix each $x_i$.    The moduli space of stable maps has irreducible components of different dimensions but it has a virtual class of dimension
\begin{equation}  \label{dimGW} 
\dim[\overline{\modm}_{g,n}(X,\beta)]^{\text{virt}}=(\dim X-3)(1-g)+\langle c_1(X),\beta\rangle +n.
\end{equation}
For $i=1,\dots,n$ there exist evaluation maps:
\begin{equation}
ev_i:\overline{\modm}_{g,n}(X,\beta)\longrightarrow X, \quad ev_i(\pi)=\pi(x_i)
\end{equation}
and classes $\gamma\in H^*(X,\bz)$ pull back to classes in $H^*(\overline{\modm}_{g,n}(X,\beta),\bc)$
\begin{equation}
ev_i^\ast:H^*(X,\bz)\longrightarrow H^*(\overline{\modm}_{g,n}(X,\beta),\bc).
\end{equation}
The forgetful map $p:\overline{\modm}_{g,n}(X,\beta)\to \overline{\modm}_{g,n}$ maps a stable map to its domain curve followed by contraction of unstable components.  The push-forward map $p_*$ on cohomology defines a CohFT $\Omega_X$ on the even part of the cohomology $H=H^{\text{even}}(X,\bc)$ (and a generalisation of a CohFT on $H^*(X,\bc)$) equipped with the symmetric, bilinear, nondegenerate form 
$$\eta(\alpha,\beta)=\int_X\alpha\wedge\beta.$$
We have $(\Omega_X)_{g,n}:H^{\text{even}}(X,\bc)^{\otimes n}\to H^*(\overline{\modm}_{g,n},\bc)$ defined by 
$$(\Omega_X)_{g,n}(\alpha_1,...\alpha_n)=\sum_{\beta}p_*\left(\prod_{i=1}^nev_i^\ast(\alpha_i)\cap[\overline{\modm}_{g,n}(X,\beta)]^{\text{virt}}\right)\in H^*(\overline{\modm}_{g,n},\bc).$$
Note that it is the dependence of $p=p(g,n,\beta)$ on $\beta$ (which is suppressed) that allows $(\Omega_X)_{g,n}(\alpha_1,...\alpha_n)$ to be composed of different degree terms.  The partition function of the CohFT $\Omega_X$ with respect to a chosen basis $e_{\alpha}$ of $H^{\text{even}}(X;\bc)$ is
$$Z_{\Omega_X}(\hbar,\{t^{\alpha}_k\})=\exp\hspace{-3mm}\sum_{\scalemath{0.8}{\begin{array}{c}g,n,\vec{k}\\ \vec{\alpha},\beta\end{array}}}\hspace{-3mm}\frac{\hbar^{g-1}}{n!}\int_{\overline{\modm}_{g,n}}\hspace{-3mm}p_*\left(\prod_{i=1}^nev_i^\ast(e_{\alpha_i})\cap[\overline{\modm}_{g,n}(X,\beta)]^{\text{virt}}\right)\prod_{j=1}^n\psi_j^{k_j}\prod t^{\alpha_j}_{k_j}.
$$
It stores {\em ancestor} invariants.  These are different to {\em descendant} invariants which use in place of $\psi_j=c_1(L_j)$, $\Psi_j=c_1(\mathcal{L}_j)$ for line bundles $\mathcal{L}_j\to \overline{\modm}_{g,n}(X,\beta)$ defined as the cotangent bundle over the $i$th marked point.

Following Definition~\ref{cohftheta}, we define $\Omega_X^{\Theta}$ by
$$(\Omega_X^{\Theta})_{g,n}(\alpha_1,...\alpha_n)=\Theta_{g,n}\cdot\sum_{\beta}p_*\left(\prod_{i=1}^nev_i^\ast(\alpha_i)\right)\in H^*(\overline{\modm}_{g,n},\bc).$$
and
$$Z^{\Theta}_{\Omega_X}(\hbar,\{t^{\alpha}_k\})=\exp\hspace{-3mm}\sum_{\scalemath{0.8}{\begin{array}{c}g,n,\vec{k}\\ \vec{\alpha},\beta\end{array}}}\hspace{-2mm}\frac{\hbar^{g-1}}{n!}\int_{\overline{\modm}_{g,n}}\Theta_{g,n}\cdot p_*\left(\prod_{i=1}^nev_i^\ast(e_{\alpha_i})\right)\cdot\prod_{j=1}^n\psi_j^{k_j}\prod t^{\alpha_j}_{k_j}.
$$
Let $\Theta_{g,n}^{\text{PD}}\subset A_{g-1}(\overline{\modm}_{g,n},\bc)$ be the $(g-1)$-dimensional Chow class given by the push-forward of the top Chern class of the bundle $E_{g,n}$ defined in Definition~\ref{obsbun}.  The virtual dimension of the pull-back of $\Theta_{g,n}^{\text{PD}}$ is:
\begin{equation}  \label{dimGWtheta}
\dim \left\{[\overline{\modm}_{g,n}(X,{\bf d})]^{\text{vir}}\cap p^{-1}(\Theta_{g,n}^{\text{PD}})\right\}=(\dim X-1)(1-g)+\langle c_1(X),\beta\rangle.
\end{equation}  
Comparing the dimension formulae \eqref{dimGW} and \eqref{dimGWtheta} we see that elliptic curves now take the place of Calabi-Yau 3-folds to give virtual dimension zero moduli spaces, independent of genus and degree.  The invariants of a target curve $X$ are trivial when the genus of $X$ is greater than 1 and computable when $X=\bp^1$, \cite{NorGro}, producing results analogous to the usual Gromov-Witten invariants in \cite{NScGro}.  For $c_1(X)=0$ and $\dim X>1$, the invariants vanish for $g>1$, while for $g=1$ it seems to predict an invariant associated to maps of elliptic curves to $X$.

\subsubsection{Weil-Petersson volumes} 
A fundamental example of a 1-dimensional CohFT is given by 
$$\Omega_{g,n}=\exp(2\pi^2\kappa_1)\in H^*(\overline{\modm}_{g,n},\br).$$  
Its partition function stores Weil-Petersson volumes 
$$V_{g,n}=\frac{(2\pi^2)^{3g-3+n}}{(3g-3+n)!}\int_{\overline{\modm}_{g,n}}\kappa_1^{3g-3+n}$$
and deformed Weil-Petersson volumes studied by Mirzakhani \cite{MirSim}.
Weil-Petersson volumes of the subvariety of $\overline{\modm}_{g,n}$ dual to $\Theta_{g,n}$ make sense even before we find such a subvariety.  They are given by 
$$V^{\Theta}_{g,n}=\frac{(2\pi^2)^{g-1}}{(g-1)!}\int_{\overline{\modm}_{g,n}}\Theta_{g,n}\cdot\kappa_1^{g-1}$$ 
which are calculable since they are given by a translation of $Z^{\text{BGW}}$.  If we include $\psi$ classes, we get polynomials $V^{\Theta}_{g,n}(L_1,...,L_n)$ which give the deformed volumes analogous to Mirzakhani's volumes.  In \cite{NorEnu,SWiJTG} the polynomials $V^{\Theta}_{g,n}(L_1,...,L_n)$ are related to the volume of the moduli space of Super Riemann surfaces.

\subsubsection{ELSV formula}
Another example of a 1-dimensional CohFT is given by 
$$\Omega_{g,n}=c(E^\vee)=1-\lambda_1+...+(-1)^{g}\lambda_{g}\in H^*(\overline{\modm}_{g,n},\bc)$$ 
where $\lambda_i=c_i(E)$ is the $i$th Chern class of the Hodge bundle $E\to\overline{\modm}_{g,n}$ defined to have fibres $H^0(\omega_C)$ over a nodal curve $C$.

Hurwitz \cite{HurRie} studied the problem of connected curves $\Sigma$ of genus $g$ covering $\bp^1$, branched over $r+1$ fixed points $\{p_1,p_2,...,p_r,p_{r+1}\}$ with arbitrary profile $\mu=(\mu_1,...,\mu_n)$ over $p_{r+1}$.  Over the other $r$ branch points one specifies simple ramification, i.e. the partition $(2,1,1,....)$.   The Riemann-Hurwitz formula determines the number $r$ of simple branch points via $2-2g-n=|\mu|-r$.  
\begin{definition} 
Define the simple Hurwitz number $H_{g,\mu}$ to be the weighted count of genus $g$ connected covers of $\bp^1$ with ramification $\mu=(\mu_1,...,\mu_n)$ over $\infty$ and simple ramification elsewhere.
Each cover $\pi$ is counted with weight $1/|{\rm Aut}(\pi)|$.
\end{definition}
Coefficients of the partition function of the CohFT $\Omega_{g,n}=c(E^\vee)$ appear naturally in the ELSV formula \cite{ELSVHur} which relates the Hurwitz numbers $H_{g,\mu}$ to the Hodge classes.  The ELSV formula is:
\[ H_{g,\mu}=\frac{r(g,\mu)!}{|{\rm Aut\ }\mu|}\prod_{i=1}^n\frac{\mu_i^{\mu_i}}{\mu_i!}\int_{\overline{\modm}_{g,n}}\frac{1-\lambda_1+...+(-1)^g\lambda_g}{(1-\mu_1\psi_1)...(1-\mu_n\psi_n)}\]
where $\mu=(\mu_1,...,\mu_n)$ and $r(g,\mu)=2g-2+n+|\mu|$.

Using $\Omega^{\Theta}_{g,n}=\Theta\cdot c(E^\vee)$ we can define an analogue of the ELSV formula:
$$H^{\Theta}_{g,\mu}=\frac{(2g-2+n+|\mu|)!}{|{\rm Aut\ }\mu|}\prod_{i=1}^n\frac{\mu_i^{\mu_i}}{\mu_i!}\int_{\overline{\modm}_{g,n}}\Theta_{g,n}\cdot\frac{1-\lambda_1+...+(-1)^{g-1}\lambda_{g-1}}{(1-\mu_1\psi_1)...(1-\mu_n\psi_n)}.
$$
It may be that $H^{\Theta}_{g,\mu}$ has an interpretation of enumerating simple Hurwitz covers.   
Note that it makes sense to set all $\mu_i=0$, and in particular there are non-trivial primary invariants over $\overline{\modm}_g$, unlike for simple Hurwitz numbers.  An example calculation:
$$\int_{\overline{\modm}_2}\Theta_2\lambda_1=\frac{1}{5}\cdot\frac{1}{8}\cdot\frac{1}{8}\cdot\frac{1}{2}+\frac{1}{10}\cdot\frac{1}{8}\cdot\frac{1}{2}=\frac{1}{128}\quad\quad\Leftarrow\quad
\lambda_1=\frac{1}{10}(2\delta_{1,1}+\delta_{\text{irr}}).
$$

\subsubsection{The versal deformation space of the $A_2$ singularity.}   \label{A2sing}
The $A_2$ singularity has a two-dimensional versal deformation space $M\cong\bc^2=\{(t_1,t_2)\}$ parametrising the family
\[ W_t(z)=z^3-t_2z+t_1
\]
that admits a semisimple Frobenius manifold structure.  Dubrovin \cite{DubGeo} associated a family of linear systems \eqref{compat} depending on the canonical coordinates $(u_1,...,u_N)$ of any semisimple Frobenius manifold $M$.  This produces a CohFT $\Omega^{A_2}$ defined on $\bc^2$ from the $A_2$ singularity using Definition~\ref{givact} in Section~\ref{givental}.  More generally, to any point of a Frobenius manifold one can associate a cohomological field theory and conversely the genus zero part of a cohomological field theory defines a Frobenius manifold \cite{DubGeo}.   

Recall that a Frobenius manifold is a complex manifold $M$ equipped with an associative product on its tangent bundle compatible with a flat metric---a nondegenerate symmetric bilinear form---on the manifold.  It is encoded by a single function $F(t_1,...,t_{N})$, known as the {\em prepotential}, that satisfies a nonlinear partial differential equation known as the Witten-Dijkgraaf-Verlinde-Verlinde (WDVV) equation:
$$F_{ijm}\eta^{mn}F_{k\ell n}=F_{i\ell m}\eta^{mn}F_{jkn},\quad \eta_{ij}=F_{1ij}
$$
where $\eta^{ik}\eta_{kj}=\delta_{ij}$, $F_i=\frac{\partial}{\partial t_i}F$, $\frac{\partial}{\partial t_1}=\un$ corresponds to the flat unit vector field for the product, and $\{t_1,...,t_{N}\}$ are (flat) local coordinates of $M$. The Frobenius manifold is {\em conformal} if it comes equipped with an Euler vector field $E$ which describes symmetries of the Frobenius manifold, neatly encoded by 
\[E\cdot F(t_1,...,t_{N})=c\cdot F(t_1,...,t_{N})+\text{quadratic polynomial},\quad c\in\bc.\]  
For a semisimple conformal Frobenius manifold, multiplication by the Euler vector field $E$ produces an endomorphism $U$ with eigenvalues $\{u_1,...,u_N\}$ known as {\em canonical coordinates} on $M$.  They give rise to vector fields  $\partial/\partial u_i$ with respect to which the metric $\eta$, product $\Cdot$ and Euler vector field $E$ are diagonal:
$$\frac{\partial}{\partial u_i}\Cdot\frac{\partial}{\partial u_j}=\delta_{ij}\frac{\partial}{\partial u_i},\quad \eta\left(\frac{\partial}{\partial u_i},\frac{\partial}{\partial u_j}\right)=\delta_{ij}\Delta_i,\quad E=\sum u_i\frac{\partial}{\partial u_i}.$$

The differential equation \eqref{compat} in $z$ is defined at any point of the Frobenius manifold using $U$, the endomorphism defined by multiplication by the Euler vector field $E$, and the endomorphism $V=[\Gamma,U]$ where $\Gamma_{ij}=\frac{\partial_{u_i}\Delta_j}{2\sqrt{\Delta_i\Delta_j}}$ for $i\neq j$ are the so-called rotation coefficients of the metric $\eta$ in the normalised canonical basis.  Hence associated to each point of the Frobenius manifold is an element $R(z)=\sum R_kz^k$ of the twisted loop group.  From $(H=T_pM,\Cdot,E,V)$ Teleman \cite{TelStr} defined the endomorphisms $R_k$ of $H$ recursively from $R_1$ by $R_0=I$ and 
\begin{equation}   \label{teleman}
[R_{k+1},U]=(k+V)R_k,\quad k=0,1,...
\end{equation}
which is a direct consequence of substitution of $Y=R(z^{-1})e^{zU}$ into \eqref{compat} with $z\mapsto z^{-1}$
$$
0=\left(\frac{d}{dz}+\frac{U}{z^2}+\frac{V}{z}\right)R(z)e^{U/z}
=\left(\frac{d}{dz}R(z)+\frac{1}{z^2}[U,R(z)]+\frac{1}{z}VR(z)\right)e^{U/z}.
$$
It is useful to consider three natural bases of the tangent space $H=T_pM\cong\bc^N$ at any point $p$ of a semisimple Frobenius manifold.  The flat basis $\{\partial/\partial t_i\}$ which gives a constant metric $\eta$, the canonical basis $\{\partial/\partial u_i\}$ which gives a trivial product $\Cdot$, and the normalised canonical basis $\{v_i\}$, for $v_i=\Delta_i^{-1/2}\partial/\partial u_i$, which gives a trivial metric $\eta$.    (A different choice of square root of $\Delta_i$ would simply give a different choice of normalised canonical basis.)  The transition matrix $\Psi$ from flat coordinates to normalised canonical coordinates sends the metric $\eta$ to the dot product, i.e. $\Psi^T\Psi=\eta$.  The topological field theory structure on $H$ induced from $\eta$ and $\Cdot$ is diagonal in the normalised canonical basis.  It is given by
$$
\Omega_{g,n}(v_i^{\otimes n})=\Delta_i^{1-g-\frac12 n}
$$
and vanishes on mixed products of $v_i$ and $v_j$, $i\neq j$.  In the normalised canonical basis, the unit vector is given by 
$$\un=(\Delta_1^\frac12,...,\Delta_N^\frac12)$$
hence it uniquely determines the topological field theory.
We find the normalised canonical basis most useful for comparisons with topological recursion---see Section~\ref{sec:TR}.

The Frobenius manifold structure on the versal deformation space $M$  of the $A_2$ singularity was constructed in \cite{DubGeo,SaiPer}.  The product on tangent spaces of the family $W_t(z)=z^3-t_2z+t_1$ is induced from the isomorphism 
\begin{align*}
T_tM&\cong\bc[z]/W'_t(z)
\end{align*}
given by $\tfrac{\partial}{\partial t_k}\mapsto \tfrac{\partial}{\partial t_k}W_t=(-z)^{k-1}$ producing
$$\frac{\partial}{\partial t_1}\Cdot\frac{\partial}{\partial t_1}=\frac{\partial}{\partial t_1},\quad \frac{\partial}{\partial t_1}\Cdot\frac{\partial}{\partial t_2}=\frac{\partial}{\partial t_2},\quad \frac{\partial}{\partial t_2}\Cdot\frac{\partial}{\partial t_2}=\frac13 t_2\frac{\partial}{\partial t_1}.
$$
The metric is given by
\[\eta(p(z),q(z))=-3\Res_{\infty}\frac{p(z)q(z)dz}{W'_t(z)}.\]
With respect to the basis $\{\frac{\partial}{\partial t_1},\frac{\partial}{\partial t_2}\}$  it is constant hence flat:
$$\eta=\left(\begin{array}{cc}0&1\\1&0\end{array}\right).
$$
The Frobenius manifold structure on $M$ is conformal.
The unit and Euler vector fields are $\un=\frac{\partial}{\partial t_1}$ and $E=t_1\frac{\partial}{\partial t_1}+\frac23t_2\frac{\partial}{\partial t_2}$, which correspond respectively to the images of 1 and $W_t(z)$ in $\bc[z]/W'_t(z)$.

The prepotential is produced via $\eta_{ij}=F_{1ij}$ and $\eta(\partial/\partial t_i\Cdot\partial/\partial t_j,\partial/\partial t_k)=F_{ijk}$
$$F(t_1,t_2)=\frac12 t_1^2t_2+\frac{1}{72}t_2^4.
$$
and satisfies $E\cdot F(t_1,t_2)=\frac83 F(t_1,t_2)$.
The canonical coordinates are  
$$ u_1=t_1+\frac{2}{3\sqrt{3}}t_2^{3/2},\quad  u_2=t_1-\frac{2}{3\sqrt{3}}t_2^{3/2}.
$$
In the normalised canonical basis, the rotation coefficients $\Gamma_{12}=\dfrac{-i\sqrt{3}}{8}t_2^{-3/2}=\Gamma_{21}$ give rise to
$V=[\Gamma,U]=\frac{i\sqrt{3}}{2}t_2^{-3/2}\left(\begin{array}{cc}0&-1\\1&0\end{array}\right)
$.  In canonical coordinates we have
\begin{equation}   \label{A2V}  U=\left(\begin{array}{cc}u_1&0\\0&u_2\end{array}\right),\quad V=\frac{2i}{3(u_1-u_2)}\left(\begin{array}{cc}0&1\\-1&0\end{array}\right).
\end{equation}
The metric $\eta$ applied to the vector fields  $\frac{\partial}{\partial u_i}=\frac12\left(\frac{\partial}{\partial t_1}-(-1)^i\left(\frac{3}{t_2}\right)^{1/2}\frac{\partial}{\partial t_2}\right)$ is $\eta\left(\frac{\partial}{\partial u_i},\frac{\partial}{\partial u_j}\right)=\delta_{ij}\Delta_i$ where $\Delta_1=\frac{\sqrt{3}}{2}t_2^{-1/2}=-\Delta_2$.  
Restrict to the point of $M$ with coordinates $(u_1,u_2)=(2,-2)$, or equivalently $(t_1,t_2)=(0,3)$.   Then $\Delta_1=1/2=-\Delta_2$ determines the TFT and
$$U=\left(\begin{array}{cc}2&0\\0&-2\end{array}\right),\quad V=\frac{1}{6}\left(\begin{array}{cc}0&i\\-i&0\end{array}\right)
$$
determines $R(z)\in  L^{(2)}GL(2,\bc)$ and $T(z)\in z^2\bc^2[[z]]$  via \eqref{teleman} to get:
\begin{align}  \label{A2RT}
R(z)&=\sum_m\frac{(6m)!}{(6m-1)(3m)!(2m)!}\left(\begin{array}{cc}-1&(-1)^m6mi\\-6mi&(-1)^{m-1}\end{array}\right)\left(\frac{z}{1728}\right)^m\\
T(z) &= z(\un-R^{-1}(z)(\un)),\quad \un=\frac{1}{\sqrt{2}}\left(\begin{array}{c}1\\i\end{array}\right). \nonumber
\end{align}

\begin{remark}
The matrix $R(z)$ defined in \eqref{A2RT}---which uses the normalised canonical basis for $H$ so that $\eta$ is the dot product---is related to the matrix $R(z)$ in \cite{PPZRel} by conjugation by the transition matrix $\Psi$ from flat coordinates to normalised canonical coordinates
$$ R(z)=\Psi\sum_m\frac{(6m)!}{(3m)!(2m)!}\left(\begin{array}{cc}\frac{1+6m}{1-6m}&0\\0&1\end{array}\right)\left(\begin{array}{cc}0&1\\1&0\end{array}\right)^m\left(\frac{z}{1728}\right)^m\Psi^{-1}
$$  
for
$$ \Psi=\frac{1}{\sqrt{2}}\left(\begin{array}{cc}1&1\\i&-i\end{array}\right).$$
\end{remark}

The triple $(R(z),T(z),\un)\in L^{(2)}GL(N,\bc)\times z^2\bc^N[[z]]\times\bc^N$ in \eqref{A2RT} produces the cohomological field theory $\Omega^{A_2}$ associated to the $A_2$ singularity at the point $(t_1,t_2)=(0,3)$ via Definition~\ref{givact} below.

\section{Givental construction of cohomological field theories.}  \label{givental}

Givental produced a construction of partition functions of cohomological field theories in \cite{GivGro}.  He defined an action of the twisted loop group, and elements of $z^2\bc^N[[z]]$ known as translations, on partition functions of cohomological field theories and used this to build partition functions of semisimple cohomological field theories out of the basic building block $Z^{\text{KW}}(\hbar,t_0,t_1,...)$ combined with the vector $\un\in \bc^N$ which represents the topological field theory.  This action was interpreted as an action on the actual cohomology classes in $H^*(\overline{\modm}_{g,n},\bc)$ independently, by Katzarkov-Kontsevich-Pantev, Kazarian and Teleman---see \cite{PPZRel,ShaBCOV}.  

The Givental action is defined on more general sequences of cohomology classes in $H^*(\overline{\modm}_{g,n},\bc)$ such as the collection of classes $\Theta_{g,n}$ or $\Omega^\Theta_{g,n}$ defined from any CohFT $\Omega_{g,n}$  in Definition~\ref{cohftheta}.  If $\Omega_{g,n}$ is semisimple the classes $\Omega^\Theta_{g,n}$ can be obtained by applying Givental's action to the collection $\Theta_{g,n}$.

\subsubsection{The twisted loop group action} \label{twloop}

The loop group $LGL(N,\bc)$ is the group of formal series
$$R(z)  =  \sum_{k=0}^\infty R_k z^k$$
where $R_k$ are $N\times N$ matrices and $R_0\in GL(N,\bc)$.  Define the {\em twisted loop group} $L^{(2)}GL(N,\bc)\subset LGL(N,\bc)$ to be the subgroup of elements satisfying $R_0=I$ and
$$ R(z)R(-z)^T=I.
$$
Elements of $L^{(2)}GL(N,\bc)$ naturally arise out of solutions to the linear system
\begin{equation}   \label{compat} 
\left(\frac{d}{dz}-U-\frac{V}{z}\right)Y=0.
\end{equation}
where $Y(z)\in\bc^N$, $U=\diag(u_1,...,u_N)$ for $u_i$ distinct and $V$ is skew symmetric.  One can choose a solution of \eqref{compat} which behaves asymptotically for $z\to\infty$ as
$$Y(z)=R(z^{-1})e^{zU},\quad R(z)=I+R_1z+R_2z^2+...\ .$$
This defines a power series $R(z)$ with coefficients given by $N\times N$ matrices which is easily shown to satisfy $R(z)R^T(-z)=I$, hence $R(z)\in L^{(2)}GL(N,\bc)$.

Givental \cite{GivGro} constructed an action on CohFTs using a triple 
$$(R(z),T(z),\un)\in L^{(2)}GL(N,\bc)\times z^2\bc^N[[z]]\times\bc^N$$
as follows.
  For a given stable graph $\Gamma$ of genus $g$ and with $n$ external edges we have
$$\phi_{\Gamma}:\overline{\modm}_{\Gamma}=\prod_{v\in V(\Gamma)}\overline{\modm}_{g(v),n(v)}\to\overline{\modm}_{g,n}.$$
Given $(R(z),T(z),\un)\in L^{(2)}GL(N,\bc)\times z^2\bc^N[[z]]\times\bc^N$, Givental's action is defined via weighted sums over stable graphs.  For $R(z)\in L^{(2)}GL(N,\bc)$, define 
$$\ce(z,w)=\frac{I-R^{-1}(z)R^{-1}(w)^T}{z+w}=\sum_{i,j\geq 0}\ce_{ij}w^iz^j$$
which has the power series expansion on the right since $R^{-1}(z)$ is also an element of the twisted loop group so the numerator $I-R^{-1}(z)R^{-1}(w)^T$ vanishes at $w=-z$.

\begin{definition}
For a stable graph $\Gamma$ denote by
$$V(\Gamma),\quad E(\Gamma),\quad H(\Gamma),\quad L(\Gamma)=L^*(\Gamma)\sqcup L^\bullet(\Gamma)$$
its set of vertices, edges, half-edges and leaves.  The disjoint splitting of $L(\Gamma)$ into ordinary leaves $L^*$ and dilaton leaves $L^\bullet$ is part of the structure on $\Gamma$.  The set of half-edges consists of leaves and oriented edges so there is an injective map $L(\Gamma)\to H(\Gamma)$ and a multiply-defined map $E(\Gamma)\to H(\Gamma)$ denoted by $E(\Gamma)\ni e\mapsto \{e^+,e^-\}\subset H(\Gamma)$.
The map sending a half-edge to its vertex is given by $v:H(\Gamma)\to V(\Gamma)$.  Decorate $\Gamma$ by  functions:
\begin{align*}
g&:V(\Gamma)\to\bn\\
\alpha&:V(\Gamma)\to\{1,...,N\}\\
p&:L^*(\Gamma)\stackrel{\cong}{\to}\{1,2,...,n\}\\
k&:H(\Gamma)\to\bn
\end{align*}
such that  $k|_{L^\bullet(\Gamma)}>1$ and $n=|L^*(\Gamma)|$.  We write $g_v=g(v)$, $\alpha_v=\alpha(v)$, $\alpha_\ell=\alpha(v(\ell))$, $p_\ell=p(\ell)$, $k_\ell=k(\ell)$.
The {\em genus} of $\Gamma$ is $g(\Gamma)=\displaystyle b_1(\Gamma)+\hspace{-2mm}\sum_{v\in V(\Gamma)}\hspace{-2mm}g(v)$.  We say $\Gamma$ is {\em stable} if any vertex labeled by $g=0$ is of valency $\geq 3$ and there are no isolated vertices labeled by $g=1$.   We write $n_v$ for the valency of the vertex $v$.
Define $G_{g,n}$ to be the finite set of all stable, connected, genus $g$, decorated graphs with $n$ ordinary leaves and at most $3g-3+n$ dilaton leaves.  
\end{definition}
\begin{definition}[\cite{PPZRel,ShaBCOV}]   \label{givact}
Given a CohFT $\Omega'=\{\Omega'_{g,n}\}$ and 
\[(R(z),T(z))\in L^{(2)}GL(N,\bc)\times z^2\bc^N[[z]]\]
 define $R\cdot T\cdot\Omega'=\Omega=\{\Omega_{g,n}\}$
 by a weighted sum over stable graphs,
\begin{equation}   \label{graphsum}
 \Omega_{g,n}:=\sum_{\Gamma\in G_{g,n}}\frac{1}{|{\rm Aut}(\Gamma)|}(\phi_{\Gamma})_*\pi_*\hspace{-2mm}\prod_{v\in V(\Gamma)}\hspace{-2mm}w(v)\hspace{-2mm} \prod_{e\in E(\Gamma)}\hspace{-2mm}w(e)\hspace{-2mm} \prod_{\ell\in L(\Gamma)}\hspace{-2mm}w(\ell)\in H^*(\overline{\modm}_{g,n},\bc)
\end{equation}

where $\pi$ is the map that forgets dilaton leaves.  Weights are defined as follows:
\begin{enumerate}[(i)]
\item {\em Vertex weight:} $w(v)=\Omega'_{g(v),n_v}$ at each vertex $v$; 
\item {\em Edge weight:} $w(e)=\ce(\psi_e',\psi_e'')$ at each edge $e$;
\item {\em Leaf weight:} $w(\ell)=\left\{\begin{array}{ll} R^{-1}(\psi_{p(\ell)}) &\text{at each ordinary leaf } \ell\in L^*\\
T(\psi_{p(\ell)})& \text{at each dilaton leaf }\ell\in L^\bullet.\end{array}\right.$
\end{enumerate}
\end{definition}
We consider only the even part of $H^*(\overline{\modm}_{g,n},\bc)$ so \eqref{graphsum} is independent of the order in which we take the product of cohomology classes.  If $\{\Omega_{g,n}\}$ is a CohFT defined on $(\bc,\eta)$ for $H\cong\bc^N$, then the classes $\{\Omega_{g,n}\}$ in \eqref{graphsum} satisfy the same restriction conditions and hence define a CohFT on $(\bc,\eta)$ with the same degree zero, or topological field theory, terms as those of $\Omega'$.  If we choose $T(z)\equiv 0$, then the sum in \eqref{graphsum}, which is over stable graphs without dilaton leaves, defines the action of the twisted loop group on CohFTs.  If we choose $R(z)\equiv I$, then \eqref{graphsum} is a graphical realisation of the translation action of $T(z)\in z^2H[[z]]$ on a CohFT $\Omega'_{g,n}$ defined by:
\[
(T\cdot\Omega')_{g,n}(v_1\otimes..\otimes v_n)=\sum_{m\geq 0}\frac{1}{m!}\pi_*\Omega'_{g,n+m}(v_1\otimes..\otimes v_n\otimes T(\psi_{n+1})\otimes..\otimes  T(\psi_{n+m}))
\]
where $\pi:\overline{\modm}_{g,n+m}\to\overline{\modm}_{g,n}$ is the forgetful map.  The sum over $m\in\bn$ defining $(T\cdot\Omega')_{g,n}$ is finite since $T(z)\in z^2H[[z]]$, so $\dim\overline{\modm}_{g,n+m}=3g-3+n+m$ grows more slowly in $m$ than the degree $2m$ coming from $T$ resulting in at most $3g-3+n$ terms.  We can relax this condition and allow $T(z)\in zH[[z]]$ if we control the growth of the degrees of all terms of $\Omega'_{g,n}$ in $n$ to ensure $T(z)$ produces a finite sum.  In particular, $\Theta_{g,n}$, and more generally $\Omega'^\Theta_{g,n}$ for any CohFT $\Omega'_{g,n}$, is annihilated by terms of degree $>g-1$ hence the sum defining $(T\Omega')_{g,n}$ consists of at most $g-1$ terms when $T(z)\in zH[[z]]$.

The tensor product $\Omega\mapsto\Omega^\Theta$ given in Definition~\ref{cohftheta} commutes with the action of $R$ and commutes with the action of $T$ up to rescaling.  For a CohFT $\Omega$, and $R(z)\in L^{(2)}GL(N,\bc)$ and $T(z)\in z\bc^N[[z]]$
\begin{equation} \label{thetacom}
(R\cdot\Omega)^\Theta=R\cdot\Omega^\Theta,\quad (zT)\cdot\Omega^\Theta=T\cdot\Omega^\Theta.
\end{equation} 
The first relation in \eqref{thetacom} uses the restriction properties \eqref{glue} of $\Theta_{g,n}$ and the second of these uses the forgetful property \eqref{forget} of $\Theta_{g,n}$ as follows: 
\begin{align*}
\pi_*\Omega^\Theta_{g,n+m}\left(\bigotimes_{i=1}^nv_i\otimes \bigotimes_{i=1}^mT(\psi_{n+i})\right)&=\pi_*\Omega_{g,n+m}\left(\bigotimes_{i=1}^nv_i\otimes \bigotimes_{i=1}^mT(\psi_{n+i})\Theta_{g,n+m}\right)\\
=&\Theta_{g,n}\pi_*\Omega_{g,n+m}\left(\bigotimes_{i=1}^nv_i\otimes \bigotimes_{i=1}^mT(\psi_{n+i})\prod_{i=1}^m\psi_{n+i}\right)\\
=&\Theta_{g,n}\pi_*\Omega_{g,n+m}\left(\bigotimes_{i=1}^nv_i\otimes \bigotimes_{i=1}^m\psi_{n+i}T(\psi_{n+i})\right)
\end{align*}
and sum over $m$ to get $T\cdot\Omega^\Theta=(zT)\cdot\Omega^\Theta$.

The Givental-Teleman theorem \cite{GivGro,TelStr} proves that the action defined in Definition~\ref{givact} is transitive on semisimple CohFTs.  In particular, a  semisimple CohFT defined on a vector space of dimension $N$ can be constructed via the Givental action on $N$ copies of the trivial CohFT.  Given a semisimple CohFT $\Omega$, there exists 
\[(R(z),T(z),\un)\in L^{(2)}GL(N,\bc)\times z^2\bc^N[[z]]\times\bc^N\]
such that $\Omega_{g,n}$ is defined by the weighted sum over graphs \eqref{graphsum} using $R(z)$, $T(z)$ and $\Omega'_{g,n}$ given by the topological field theory underlying $\Omega_{g,n}$.  Note that a semi-simple topological field theory of dimension $N$ is equivalent to $\un\in\bc^N$ which gives the unit vector in terms of a basis in which the product is diagonal and the metric $\eta$ is the dot product, known as a {\em normalised canonical basis}.

On the level of partition functions, the construction of a semisimple CohFT from the trivial CohFT is realised via an action of quantised differential operators $\hat{R}$ and $\hat{T}$ on products of $Z^{\text{KW}}(\hbar,t_0,t_1,...)$, a KdV tau function defined in the next section.  
\begin{definition} \label{ops}
For $R(z)\hspace{-1mm}=\hspace{-.5mm}\displaystyle\exp(\sum_{\ell>0} r_\ell z^\ell)\hspace{-1mm}\in L^{(2)}GL(N,\bc)$, $\displaystyle T(z)=\hspace{-1mm} \sum_{k>0} T^\alpha_kz^k\hspace{-1mm}\in z\bc^N[[z]]$, 
\begin{align*}
\hat{R}&:=\exp\left\{\sum_{\ell=1}^{\infty}\sum_{\alpha,\beta}
\left(\sum_{k=0}^{\infty}(r_k)_\beta^\alpha t_k^{\beta}\frac{\partial}{\partial t_{k+\ell}^{\alpha}}+\frac{\hbar}{2}\sum_{m=0}^{\ell-1}(-1)^{m+1}(r_\ell)^{\alpha}_{\beta}\frac{\partial^2}{\partial t_m^{\alpha}\partial t_{\ell-m-1}^{\beta}}\right)
\right\}\\
\hat{T}&:=\exp\left(\sum_{\alpha=1}^m\displaystyle\sum_{k>0} T^\alpha_k\frac{\partial}{\partial t_k^{\alpha}}\right).
\end{align*}
\end{definition}
The partition function of \eqref{graphsum} is given in \cite{DSSGiv,GivGro,ShaBCOV} by the following formula: 
\begin{align}  \label{givpart}
Z_{\Omega}(\hbar,\{t^{\alpha}_k\})&=\hat{R}\cdot\hat{T}\cdot\hat{\un}\cdot Z^{\text{KW}}(\hbar,\{t_k^{1}\})\cdots Z^{\text{KW}}(\hbar,\{t_k^{N}\})\\
&=\exp\Big\{\sum_{g,n}\hbar^{g-1}\hspace{-1mm}\sum_{\Gamma\in G_{g,n}}\frac{1}{|{\rm Aut}(\Gamma)|}\prod_{v\in V(\Gamma)}\hspace{-2mm}\hat{w}(v)\hspace{-1mm} \prod_{e\in E(\Gamma)}\hspace{-2mm}\hat{w}(e)\hspace{-1mm} \prod_{\ell\in L(\Gamma)}\hspace{-2mm}\hat{w}(\ell)\Big\}. \nonumber
\end{align}
The operator $\hat{\un}$ rescales the variables $\hat{\Delta}\cdot Z^{\text{KW}}(\hbar,\{t_k^{\alpha}\})= Z^{\text{KW}}((\un^\alpha)^2\hbar,\{\un^\alpha t_k^{\alpha}\})$.  Vertex weights $\hat{w}(v)$  store products of $Z^{\text{KW}}$ corresponding to the partition function of a topological field theory, edge weights $\hat{w}(e)$ store coefficients of the series $\ce(w,z)$, and leaf weights $\hat{w}(\ell)$ store the variables $t_k^\alpha$ in a series weighted by coefficients of the series $R^{-1}(-z)$.  We do not give explicit formulae for the weights, see \cite{DSSGiv,GivGro,ShaBCOV}, and instead use an equivalent elegant formulation given by topological recursion, defined in Section~\ref{TR}.
 
 A consequence of the relations \eqref{thetacom} is the following proposition which modifies the construction of a semisimple CohFT $\Omega$ to produce $\Omega^\Theta$.
\begin{proposition}   \label{givtheta}
Given a semisimple CohFT $\Omega$ defined  via \eqref{graphsum} using
\[(R(z),T(z),\un)\in L^{(2)}GL(N,\bc)\times z^2\bc^N[[z]]\times\bc^N\]
then the collection of classes $\Omega^\Theta$ is defined via \eqref{graphsum} using
\[(R(z),\frac{1}{z}T(z),\un)\in L^{(2)}GL(N,\bc)\times z\bc^N[[z]]\times\bc^N\]
and 
\[\Omega'_{g,n}=\Theta_{g,n}\otimes\Omega^{(0)}_{g,n}:H^{\otimes n}\to H^{4g-4+2n}(\overline{\modm}_{g,n},\bc)\] 
for $\Omega^{(0)}_{g,n}$ the degree 0 part of $\Omega_{g,n}$ determined by the vector $\un\in\bc^N$.  Its partition function $Z_{\Omega^\Theta}(\hbar,\{t^{\alpha}_k\})$ is obtained by replacing each copy of $Z^{\text{KW}}(\hbar,\{t_k\})$ in \eqref{givpart} by a copy of $Z^\Theta(\hbar,\{t_k\})$ and shifting the operator $\hat{T}$.
\end{proposition}

\subsection{KdV tau functions}   \label{sec:kdv}

The KdV hierarchy is a sequence of partial differential equations beginning with the KdV equation.
\begin{equation}\label{kdv}
U_{t_1}=UU_{t_0}+\frac{\hbar}{12}U_{t_0t_0t_0},\quad U(t_0,0,0,...)=f(t_0).
\end{equation}
A tau function $Z(t_0,t_1,...)$ of the KdV hierarchy (equivalently the KP hierarchy in odd times $p_{2m+1}=t_m/(2m+1)!!$) gives rise to a solution $U=\hbar\frac{\partial^2}{\partial t_0^2}\log Z$ of the KdV hierarchy. 
The first equation in the hierarchy is the KdV equation \eqref{kdv}, and later equations $U_{t_k}=P_k(U,U_{t_0},U_{t_0t_0},...)$ for $k>1$ determine $U$ uniquely from $U(t_0,0,0,...)$.  See \cite{MJDSol} for the full definition.

 The Kontsevich-Witten tau function $Z^{\text{KW}}$ is defined by the initial condition $U^{\text{KW}}(t_0,0,0,...)=t_0$ for $U^{\text{KW}}=\hbar\frac{\partial^2}{\partial t_0^2}\log Z^{\text{KW}}$.  The low genus terms of $\log Z^{\text{KW}}$ are 
$$\log Z^{\text{KW}}(\hbar,t_0,t_1,...)=\hbar^{-1}(\frac{t_0^3}{3!}+\frac{t_0^3t_1}{3!}+\frac{t_0^4t_2}{4!}+...)+\frac{t_1}{24}+...
$$
\begin{theorem}[Witten-Kontsevich 1992 \cite{KonInt,WitTwo}]  \label{KW}
$$Z^{\text{KW}}(\hbar,t_0,t_1,...)=\exp\sum_{g,n}\hbar^{g-1}\frac{1}{n!}\sum_{\vec{k}\in\bn^n}\int_{\overline{\modm}_{g,n}}\prod_{i=1}^n\psi_i^{m_i}t_{m_i}
$$
is a tau function of the KdV hierarchy.
\end{theorem}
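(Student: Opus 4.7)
The plan is to follow Kontsevich's original strategy, proceeding in three stages: first, identify the generating function of $\psi$-class intersection numbers with a weighted sum over trivalent ribbon graphs via a cell decomposition of moduli space; second, recognize that sum as the perturbative expansion of a Hermitian matrix integral (the Kontsevich integral); and third, prove that this matrix integral is a tau function of the KdV hierarchy.

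First I would invoke the Strebel-Jenkins theorem: for each stable curve $(C,p_1,\ldots,p_n)$ equipped with positive real perimeters $L_1,\ldots,L_n$, there is a unique meromorphic quadratic differential whose horizontal trajectories decompose $C$ into a disjoint union of ring domains, one surrounding each $p_i$. The critical graph of the differential is a trivalent ribbon graph and yields a cell decomposition of $\overline{\modm}_{g,n}\times\br_+^n$ indexed by ribbon graphs, with edge lengths as coordinates. A careful analysis of the piecewise-polynomial symplectic volume of each cell against $\sum L_i^2\psi_i$, followed by the Laplace transform $t_k := -(2k-1)!!\,\mathrm{tr}(\Lambda^{-2k-1})$ in the $L_i$, expresses $\sum_{g,n,\vec k}\frac{\hbar^{g-1}}{n!}\int_{\overline{\modm}_{g,n}}\prod\psi_i^{k_i}\prod t_{k_i}$ as a sum over trivalent ribbon graphs with explicit edge weights.

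Next, the same sum is exactly the connected Feynman diagram expansion of
\begin{equation*}
Z_N(\Lambda)=\frac{\int dM\,\exp\bigl(\tfrac{i}{6}\mathrm{tr}(M^3)-\tfrac{1}{2}\mathrm{tr}(\Lambda M^2)\bigr)}{\int dM\,\exp\bigl(-\tfrac{1}{2}\mathrm{tr}(\Lambda M^2)\bigr)}
\end{equation*}
for an $N\times N$ Hermitian matrix $M$ and positive diagonal $\Lambda$: the cubic vertex produces trivalent fat vertices and the propagator pairs half-edges, and matching combinatorial weights identifies $\log Z_N(\Lambda)$ in a suitable large-$N$ limit with the $\psi$-class generating series written as a formal series in the $t_k$. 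The third and final stage is to show that $Z_N(\Lambda)$ is a KdV tau function. My preferred route is to derive Virasoro constraints $L_n Z_N=0$ for $n\geq -1$ from invariance of the matrix integral under infinitesimal reparametrizations $M\mapsto M+\epsilon M^{n+1}$; combined with the string equation, these constraints are equivalent, via the work of Dijkgraaf-Verlinde-Verlinde and Fukuma-Kawai-Nakayama, to $Z_N$ being a tau function of KdV. Alternatively one can verify the Hirota bilinear identities directly through an orthogonal polynomial analysis, or realize $Z_N(\Lambda)$ as a point of the Sato Grassmannian.

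The main obstacle is the matching of the ribbon graph sum with the matrix integral: it rests on the combinatorial miracle that the Laplace transforms of piecewise polynomial cell volumes assemble precisely into the Gaussian Feynman weights of the cubic matrix model, and requires care in handling automorphisms of ribbon graphs and the boundary strata of $\overline{\modm}_{g,n}$. The Strebel cell decomposition is classical but subtle, and the passage from the matrix model to KdV, while technical, is by now a standard application of the Virasoro symmetry of one-matrix models.
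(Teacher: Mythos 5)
The paper does not prove this statement at all: it is quoted as the Witten--Kontsevich theorem with citations to \cite{KonInt,WitTwo}, and serves only as motivation and as a building block (via Givental reconstruction and the Airy spectral curve) for the results actually proven here about $Z^{\Theta}$ and $Z^{\text{BGW}}$. So there is no in-paper argument to compare against. Your outline is a faithful sketch of Kontsevich's original proof: the Strebel--Jenkins cell decomposition, the main identity matching Laplace transforms of cell volumes with the Feynman expansion of the matrix Airy integral, and the identification of that integral as a KdV tau function. Two points deserve more care than your sketch gives them. First, the ribbon-graph decomposition covers only $\modm_{g,n}\times\br_+^n$, not the compactification, while the $\psi$-classes are integrated over $\overline{\modm}_{g,n}$; Kontsevich's argument that the combinatorial symplectic form $\tfrac12\sum L_i^2\psi_i$ computes the compactified intersection numbers (the boundary contributing nothing in top degree) is a genuine step, not merely ``care with boundary strata.'' Second, the logical direction in the last stage should be stated precisely: Virasoro constraints plus the string equation determine the formal series uniquely, and one must separately exhibit a KdV tau function satisfying them (or place $Z_N(\Lambda)$ in the Sato Grassmannian directly); saying the constraints are ``equivalent'' to the tau-function property elides this. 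As a proposal these are acceptable elisions of known technical points rather than errors, but a complete write-up would need both.
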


The Br\'ezin-Gross-Witten solution $U^{\text{BGW}}=\hbar\frac{\partial^2}{\partial t_0^2}\log Z^{\text{BGW}}$ of the KdV hierarchy arises out of a unitary matrix model studied in \cite{BGrExt,GWiPos}.  It is defined by the initial condition
$$
U^{\text{BGW}}(t_0,0,0,...)=\frac{\hbar}{8(1-t_0)^2}.
$$
The low genus $g$ terms (= coefficient of $\hbar^{g-1}$) of $\log Z^{\text{BGW}}$ are
\begin{align}  \label{lowg}
\log Z^{\text{BGW}}=&-\frac{1}{8}\log(1-t_0)+\hbar\frac{3}{128}\frac{t_1}{(1-t_0)^3}+\hbar^2\frac{15}{1024}\frac{t_2}{(1-t_0)^5}\\
&+\hbar^2\frac{63}{1024}\frac{t_1^2}{(1-t_0)^6}+O(\hbar^3)\nonumber\\
=\frac{1}{8}t_0+&\frac{1}{16}t_0^2+..+\hbar(\frac{3}{128}t_1+\frac{9}{128}t_0t_1+..)+\hbar^2(\frac{15}{1024}t_2+\frac{63}{1024}t_1^2+..)\nonumber
\end{align}
It satisfies the homogeneity property 
\[\frac{\partial}{\partial t_0}Z^{\text{BGW}}(\hbar,t_0,t_1,...)=\sum_{i=0}^{\infty}(2i+1)t_i \frac{\partial}{\partial t_i}Z^{\text{BGW}}(\hbar,t_0,t_1,...)+\frac18Z^{\text{BGW}}(\hbar,t_0,t_1,...)\]
which coincides with \eqref{dilaton} satisfied by $Z^\Theta(\hbar,t_0,t_1,...)$.
A proof of this homogeneity property for $Z^{\text{BGW}}$ can be found in \cite{AleCut,DNoTop}.

The tau function $Z^{\text{BGW}}(\hbar,t_0,t_1,...)$ shares many properties of the famous Kont\-sevich-Witten tau function $Z^{\text{KW}}(\hbar,t_0,t_1,...)$ introduced in \cite{WitTwo}.   An analogue of Theorem~\ref{KW} is given by Conjecture~\ref{kdvconj} which postulates that the function
\[ Z^{\Theta}(\hbar,t_0,t_1,...)=\exp\sum_{g,n,\vec{k}}\frac{\hbar^{g-1}}{n!}\int_{\overline{\modm}_{g,n}}\Theta_{g,n}\cdot\prod_{j=1}^n\psi_j^{k_j}\prod t_{k_j}\]
coincides with $Z^{\text{BGW}}(\hbar,t_0,t_1,...)$.

The tau function $Z^{\text{BGW}}$ appears in a generalisation of Givental's decomposition of CohFTs in \cite{CNoTop}.  
\begin{definition}  \label{BGWCohFTpart}
Given a semisimple CohFT $\Omega$ with partition function $Z_\Omega(\hbar,\{t^{\alpha}_k\})$ constructed as a graphical sum via \eqref{givpart}
\[Z_{\Omega}(\hbar,\{t^{\alpha}_k\})=\hat{R}\cdot\hat{T}\cdot\hat{\un}\cdot Z^{\text{KW}}(\hbar,\{t_k^{1}\})\cdots Z^{\text{KW}}(\hbar,\{t_k^{N}\})\] 
define
\[Z^{\text{BGW}}_{\Omega}(\hbar,\{t^{\alpha}_k\})=\hat{R}\cdot\hat{T_0}\cdot\hat{\un}\cdot Z^{\text{BGW}}(\hbar,\{t_k^{1}\})\cdots Z^{\text{BGW}}(\hbar,\{t_k^{N}\})\] 
where $T_0=\frac{1}{z}T(z)$.
\end{definition}
The same shift $T_0=\frac{1}{z}T(z)$ is used by $Z^{\text{BGW}}(\hbar,\{t_k\})$ and  $Z^\Theta(\hbar,\{t_k\})$ due to their common homogeneity property \eqref{dilaton}.  
One can also replace only some copies of $Z^{\text{KW}}(\hbar,\{t_k\})$ in \eqref{givpart} by copies of $Z^{\text{BGW}}(\hbar,\{t_k\})$ and shift components of $\hat{T}$.
For example, in \cite{DNoTopI} the enumeration of bipartite dessins d'enfant is shown to have partition function 
\begin{equation}   \label{despart}
Z(\hbar,\{t_k^{\alpha}\})
=\hat{R}\cdot \hat{T}\cdot Z^{\text{BGW}}(-\frac12\hbar,\{\frac{i}{\sqrt{2}}t_k^{1}\})Z^{\text{KW}}(32\hbar,\{4\sqrt{2}t_k^{2}\}) 
\end{equation}
for $R$ and $T$ determined by the curve $xy^2+xy+1=0$ as described in Section~\ref{TR}.

\subsection{Topological recursion.}   \label{TR}
Figure~\ref{fig:cons} summarises the contents of this section.  
\begin{figure}[H]
\centering

\begin{tikzpicture}[scale=0.5]
\draw (1,4) node {$R(z)\in L^{(2)}GL(N,\bc)$};
\draw (1,3) node {$T(z)\in z^2 \bc^N[[z]],\un\in\bc^N$};
\draw [->, line width=1pt] (5,4)--(13,4);
\draw (9,5) node {Givental construction};
\draw (15.5,4) node {$Z_\Omega(\hbar,\{t^{\alpha}_k\})$};
\draw [<->, line width=1pt] (1,2.5)--(1,-1.5);
\draw [<->, line width=1pt] (15.5,2.5)--(15.5,-1.5);
\draw (1,-2.5) node {$S=(C,x,y,B)$};
\draw (15.5,-2.5) node {$Z^S(\hbar,\{t^{\alpha}_k\})$};
\draw [->, line width=1pt] (5,-2.5)--(13,-2.5);
\draw (9,-2) node {topological recursion};
\end{tikzpicture}
\caption{Constructions of CohFT partition functions} \label{fig:cons}
\end{figure}
The upper horizontal arrow in the figure represents Givental's construction of a partition function defined in \eqref{givpart} and Definition~\ref{givact}.  Topological recursion is defined in Section~\ref{TR}---it produces a partition function from a spectral curve $S=(C,x,y,B)$ consisting of a Riemann surface $C$ equipped with meromorphic functions $x$ and $y$ and a bidifferential $B$.   We begin with a description of the left vertical arrow which represents the construction of an element $R(z)\in L^{(2)}GL(N,\bc)$ from $(C,x,B)$ in \eqref{eq:BtoR} and $T(z)$ and $\un$ from $(C,x,y)$ in \eqref{ytoT} and \eqref{ytoU}.  We then define topological recursion in \ref{sec:TR} and state the result of \cite{DOSSIde}, that topological recursion encodes the graphical construction in \eqref{givpart} and gives equality of partition functions, represented by the right vertical arrow.

An element of the twisted loop group $R(z)\in L^{(2)}GL(N,\bc)$ can be naturally constructed from a Riemann surface $\Sigma$ equipped with a bidifferential $B(p_1,p_2)$ on $\Sigma\times\Sigma$ and a meromorphic function $x:\Sigma\to\bc$, for $N=$ the number of zeros of $dx$.  A basic example is the function $x=z^2$ on $\Sigma=\bc$ which gives rise to the constant element $R(z)=1\in GL(1,\bc)$.  More generally, any function $x$ that looks like this example locally, i.e. $x=s^2+c$ for $s$ a local coordinate around a zero of $dx$ and $c\in\bc$, gives $R(z)=I+R_1z+...\in L^{(2)}GL(N,\bc)$ which is in some sense a deformation of $I\in GL(N,\bc)$, or $N$ copies of the basic example.

\begin{definition} \label{bidiff} On any compact Riemann surface $(\Sigma,\{ {\cal A}_i\}_{i=1,...,g})$ with a choice of $\cal A$-cycles, define a {\em fundamental normalised bidifferential of the second kind}  $B(p,p')$ to be a symmetric tensor product of differentials on $\Sigma\times\Sigma$, uniquely defined by the properties that it has a double pole on the diagonal of zero residue, double residue equal to $1$, no further singularities and normalised by $\int_{p\in{\cal A}_i}B(p,p')=0$, $i=1,...,g$, \cite{FayThe}.  On a rational curve, which is sufficient for the examples in this paper, $B$ is the Cauchy kernel
$$
B(z_1,z_2)=\frac{dz_1dz_2}{(z_1-z_2)^2}.$$  
\end{definition}
The bidifferential $B(p,p')$ acts as a kernel for producing meromorphic differentials on the Riemann surface $\Sigma$ via $\omega(p)=\int_{\Lambda}\lambda(p')B(p,p')$ where $\lambda$ is a function defined along the contour $\Lambda\subset\Sigma$.  Depending on the choice of $(\Lambda,\lambda)$, $\omega$ can be a differential of the 1st kind (holomorphic), 2nd kind (zero residues) or 3rd kind (simple poles).  
\begin{definition}\label{evaluationform}
For $(\Sigma,x)$, a Riemann surface equipped with a meromorphic function, define evaluation of any meromorphic differential $\omega$ at a simple zero $\cp$ of $dx$ by
$$
\omega(\cp):=\Res_{p=\cp}\frac{\omega(p)}{\sqrt{2(x(p)-x(\cp))}}
$$
where we choose a branch of $\sqrt{x(p)-x(\cp)}$ once and for all at $\cp$ to remove the $\pm1$ ambiguity.
\end{definition}
A fundamental example of Definition~\ref{evaluationform} required here is $B(\cp,p)$ which is a normalised (trivial $\cal A$-periods) differential of the second kind holomorphic on $\Sigma\backslash\cp$ with a double pole at the simple zero $\cp$ of $dx$.  

In order to produce an element of the twisted loop group,
Shramchenko \cite{ShrRie} constructed a solution $Y(z)$ of the linear system \eqref{compat} using $V=[\cb,U]$ for $\cb_{\alpha\beta}=B(\cp_\alpha,\cp_\beta)$ (defined for $\alpha\neq\beta$) given by
$$Y(z)^\alpha_\beta= -\frac{\sqrt{z}}{\sqrt{2\pi}}\int_{\Gamma_\beta} B(\cp_\alpha,p)\cdot e^{\frac{-x(p)}{z}}.$$
The proof in \cite{ShrRie} is indirect, showing that $Y(z)^i_j$ satisfies an associated set of PDEs in $u_i$, and using the Rauch variational formula to calculate $\partial_{u_k}B(\cp_\alpha,p)$.  Instead, here we work directly with the associated element $R(z)$ of the twisted loop group.
\begin{definition}  \label{BtoR}
Define the asymptotic series in the limit $z\to 0$ by
\beq  \label{eq:BtoR}
R^{-1}(z)^\alpha_\beta = -\frac{\sqrt{z}}{\sqrt{2\pi}}\int_{\Gamma_\beta} B(\cp_\alpha,p)\cdot e^{\frac{x(\cp_\beta)-x(p)}{ z}}
\eeq
where $\Gamma_\beta$ is a path of steepest descent for $-x(p)/z$ containing $x(\cp_\beta)$.  
\end{definition}
Note that the asymptotic expansion of the contour integral \eqref{eq:BtoR} for $z\to 0$ depends only the intersection of $\Gamma_\beta$ with a neighbourhood of $p=\cp_\beta$.  When $\alpha=\beta$, the integrand has zero residue at $p=\cp_\beta$ so we deform $\Gamma_\beta$ to go around $\cp_\beta$ to get a well-defined integral.  Locally, this is the same as defining $\int_\br s^{-2}\exp(-s^2)ds=-2\sqrt{\pi}$ by  integrating the analytic function $z^{-2}\exp(-z^2)$ along the real line in $\bc$ deformed to avoid 0.
\begin{lemma} [\cite{ShrRie}]  \label{eynlem}
The asymptotic series $R(z)$ defined in \eqref{eq:BtoR} satisfies the twisted loop group condition
\beq\label{twloopgp}
R(z) R^T(-z) = Id.
\eeq
 \end{lemma}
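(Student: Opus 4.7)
The plan is to verify directly, from the integral representation \eqref{eq:BtoR}, that $R(z)R^T(-z)=I$. A short matrix-algebra manipulation (take transpose then inverse of both sides) shows this is equivalent to
$$\sum_k R^{-1}(z)^k_i\, R^{-1}(-z)^k_j = \delta_{ij}\qquad\text{for all }i,j,$$
a statement expressed entirely in terms of the explicit integrals \eqref{eq:BtoR}. Substituting converts the left-hand side into a double contour integral over $\Gamma_i\times\Gamma_j$ with kernel $\sum_k B(\cp_k,p)B(\cp_k,q)$ against the exponential $e^{[(u_i-x(p))-(u_j-x(q))]/z}$ and prefactor $\sqrt{z}\sqrt{-z}/(2\pi)$.

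The plan proceeds in two steps. First, I would verify the identity at leading order in $z$ by a Laplace (steepest-descent) asymptotic expansion around the saddle $(\cp_i,\cp_j)$, using local coordinates $s_k=\sqrt{2(x-x(\cp_k))}$ in which $x-x(\cp_k)=s_k^2/2$ is purely quadratic and $\Gamma_k$ is a steepest-descent line for $-x/z$. For $i\neq j$ the factor $e^{(u_i-u_j)/z}$ has no expansion as a formal power series in $z$ and kills the entry; for $i=j$ the regularised Gaussian $\int s_i^{-2} e^{-s_i^2/(2z)}\,ds_i = -\sqrt{2\pi/z}$ (via the contour-deformation convention noted after Definition~\ref{BtoR}, consistent with the evaluation formula of Definition~\ref{evaluationform}) combined with the $\sqrt{z}$ prefactor yields $R^{-1}(z)^i_i = 1 + O(z)$. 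Hence $R_0=I$, giving the base case $S_0=I$ for $S(z):=R(z)^T R(-z)$. Second, I would verify that the coefficients $R_k$ read off from \eqref{eq:BtoR} satisfy the recursion \eqref{teleman} by order-by-order Gaussian integration, or alternatively by differentiating the integral under the sign and matching with the ODE derived earlier in the excerpt for $R(z)e^{U/z}$. With the recursion in hand, a short calculation gives
$$z^2 S'(z) = [U,S(z)],$$
in which the $V$-contributions cancel by $V^T=-V$. Extracting coefficients and using that $U$ has distinct eigenvalues, this ODE together with $S_0=I$ forces $S_k=0$ for all $k\geq 1$ by induction: $[U,S_k]$ has vanishing diagonal, which forces the diagonal of $S_{k-1}$ to vanish, and the off-diagonal entries then propagate invertibly. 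Hence $S(z)\equiv I$, and matrix algebra gives the twisted loop group relation.

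The main obstacle is the leading-order Laplace step: although each ingredient (Gaussian integration, regularisation of the diagonal double pole of $B$, and evaluation of $\sum_k B(\cp_k,p)B(\cp_k,q)$ near the saddles) is standard, they must be coordinated so that the $\sqrt{z}$ prefactors produce the Kronecker delta exactly, rather than a spurious nonzero multiple. The all-orders propagation is then a clean algebraic consequence of the recursion \eqref{teleman} and the skew-symmetry of $V$.
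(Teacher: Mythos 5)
Your overall architecture (reduce to $\sum_k R^{-1}(z)^k_i R^{-1}(-z)^k_j=\delta_{ij}$, get the constant term right, then propagate to all orders via the ODE for $S(z)=R(z)^TR(-z)$) is internally coherent, and the final algebraic induction using distinctness of the $u_i$ does work. But it rests on a step you have not actually supplied: that the asymptotic series \eqref{eq:BtoR} satisfies the recursion \eqref{teleman} (equivalently the ODE \eqref{compat}) for the skew-symmetric $V=[\cb,U]$, $\cb_{ij}=B(\cp_i,\cp_j)$. This is precisely Shramchenko's theorem, and it is not accessible by ``order-by-order Gaussian integration'' or by naively differentiating under the integral sign: differentiating in $z$ produces $\int_{\Gamma_j}B(\cp_i,p)\,x(p)\,e^{(u_j-x(p))/z}$, and relating this to the $V R_k$ term requires a global identity coupling the expansions of $B(\cp_i,\cdot)$ at the \emph{different} critical points. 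Local (Gaussian) data near each saddle cannot see this; the known proofs use the Rauch variational formula for $\partial_{u_k}B$, which the paper explicitly declines to reproduce for exactly this reason. As written, step 2 is therefore a genuine gap, and it is the hardest part of the statement. (A smaller slip: in your step 1 there is no factor $e^{(u_i-u_j)/z}$ at the saddle $(\cp_i,\cp_j)$ --- the exponent vanishes there; the off-diagonal entries of $R_0$ vanish simply because $B(\cp_i,p)$ is regular at $p=\cp_j$ for $i\neq j$, so the Laplace transform is $O(z)$ against the $\sqrt{z}$ prefactor.)

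The paper's proof avoids the ODE altogether. Its only global input is the residue identity
$\sum_{i}\Res_{q=\cp_i}\frac{B(p,q)B(p',q)}{dx(q)}=-d_p\bigl(\tfrac{B(p,p')}{dx(p)}\bigr)-d_{p'}\bigl(\tfrac{B(p,p')}{dx(p')}\bigr)$,
which follows from the residue theorem and the Cauchy/Bergman property of $B$. Taking the Laplace transform of both sides in $(p,p')$ over $\Gamma_i\times\Gamma_j$ turns the left side into $\sum_k[R^{-1}(z_1)]^k_i[R^{-1}(z_2)]^k_j/(z_1z_2)$ and the right side into $-(z_1^{-1}+z_2^{-1})\check B^{i,j}(z_1,z_2)$, yielding Eynard's formula \eqref{shape2}; finiteness of $\check B^{i,j}$ at $z_2=-z_1$ then forces the numerator to vanish there, which is exactly \eqref{twloopgp}. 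If you want to rescue your approach, replace step 2 by this residue identity (or by a one-variable Laplace transform of it), since that is the irreducible global input; everything else in your plan is bookkeeping.
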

 \begin{proof}
The proof here is taken from \cite{DNOPSPri}.  We have
 \begin{align}  \label{unberg}
\sum_{\alpha=1}^N\Res_{q=\cp_\alpha}\frac{B(p,q)B(p',q)}{dx(q)}&=-\Res_{q=p}\frac{B(p,q)B(p',q)}{dx(q)}-\Res_{q=p'}\frac{B(p,q)B(p',q)}{dx(q)}\\ \nonumber
 &=-d_p\left(\frac{B(p,p')}{dx(p)}\right)-d_{p'}\left(\frac{B(p,p')}{dx(p')}\right)
 \end{align}
where the first equality uses the fact that the only poles of the integrand occur at $\{p,p',\cp_\alpha,\alpha=1,...,N\}$, and the second equality uses the Cauchy formula satisfied by the Bergman kernel.  Define the Laplace transform of the Bergman kernel by
$$
\check{B}^{\alpha,\beta}(z_1,z_2)  =
\frac{ e^{ \frac{x(\cp_\alpha) }{ z_1}+ \frac{x(\cp_\beta)}{ z_2}}}{2 \pi \sqrt{z_1z_2}} \int_{\Gamma_\alpha}
\int_{\Gamma_\beta} B(p,p') e^{- \frac{x(p) }{ z_1}- \frac{x(p')}{ z_2}} .
$$
The Laplace transform of the LHS of \eqref{unberg} is
 \begin{align*} 
 \frac{ e^{ \frac{x(\cp_\alpha) }{ z_1}+ \frac{x(\cp_\beta)}{ z_2}}}{2 \pi \sqrt{z_1z_2}} \int_{\Gamma_\alpha}
\int_{\Gamma_\beta} &e^{- \frac{x(p) }{ z_1}- \frac{x(p')}{ z_2}}\sum_{\gamma=1}^N\Res_{q=\cp_\gamma}\frac{B(p,q)B(p',q)}{dx(q)}\\
&=\sum_{\gamma=1}^N\frac{ e^{ \frac{x(\cp_\alpha)}{ z_1}+ \frac{x(\cp_\beta)}{ z_2}}}{2 \pi \sqrt{z_1z_2}} \int_{\Gamma_\alpha}
e^{- \frac{x(p) }{ z_1}}B(p,\cp_\gamma) \int_{\Gamma_\beta}e^{- \frac{x(p')}{ z_2}}B(p',\cp_\gamma)\\
&=\sum_{\gamma=1}^N\frac{\left[R^{-1}(z_1)\right]^\gamma_\alpha\left[R^{-1}(z_2)\right]_{\beta}^\gamma}{ z_1z_2}.
 \end{align*}
Since the Laplace transform satisfies
$\displaystyle\int_{\Gamma_\alpha} d\left(\frac{\omega(p)}{dx(p)}\right)e^{- \frac{x(p) }{ z}}=\frac{1}{z}\int_{\Gamma_\alpha} \omega(p)e^{- \frac{x(p) }{ z}}
$
for any differential $\omega(p)$, by integration by parts,
then the Laplace transform of the RHS of \eqref{unberg} is
 \begin{align*} 
 -\frac{ e^{ \frac{x(\cp_\alpha)}{ z_1}+ \frac{x(\cp_\beta)}{ z_2}}}{2 \pi \sqrt{z_1z_2}} \int_{\Gamma_\alpha}
\int_{\Gamma_\beta} e^{- \frac{x(p) }{ z_1}- \frac{x(p')}{ z_2}}
&\left\{d_p\left(\frac{B(p,p')}{dx(p)}\right)+d_{p'}\left(\frac{B(p,p')}{dx(p')}\right)\right\}\\
=&-\left(\frac{1}{z_1}+\frac{1}{z_2}\right)\check{B}^{\alpha,\beta}(z_1,z_2).
 \end{align*}
Putting the two sides together yields the following result due to Eynard \cite{EynInv}
\beq\label{shape2}
\check{B}^{\alpha,\beta}(z_1,z_2) =    - \frac{\sum_{\gamma=1}^{N} \left[R^{-1}(z_1)\right]^\gamma_\alpha \left[R^{-1}(z_2)\right]^k_\beta }{ z_1+z_2}.
\eeq
Equation \eqref{twloopgp} is an immediate consequence of \eqref{shape2} and the finiteness of $\check{B}^{\alpha,\beta}(z_1,z_2)$ at $z_2=-z_1$.
\end{proof}

\subsubsection{Topological recursion}   \label{sec:TR}

Topological recursion is a procedure which takes as input a spectral curve, defined below, and produces a collection of symmetric tensor products of meromorphic 1-forms $\omega_{g,n}$ on $C^n$.   The correlators store enumerative information in different ways.  Periods of the correlators store top intersection numbers of tautological classes in the moduli space of stable curves $\overline{\modm}_{g,n}$ and local expansions of the correlators can serve as generating functions for enumerative problems.  

A spectral curve $S=(C,x,y,B)$ is a Riemann surface $C$ equipped with two meromorphic functions $x, y: C\to \mathbb{C}$ and a bidifferential $B(p_1,p_2)$ defined in \eqref{bidiff}, which is the Cauchy kernel in this paper.   Topological recursion, as developed by Chekhov, Eynard, Orantin \cite{CEyHer,EOrInv}, is a procedure that produces from a spectral curve $S=(C,x,y,B)$ a symmetric tensor product of meromorphic 1-forms $\omega_{g,n}$ on $C^n$ for integers $g\geq 0$ and $n\geq 1$, which we refer to as {\em correlation differentials} or {\em correlators}.  The correlation differentials $\omega_{g,n}$ are defined by
\[
\omega_{0,1}(p_1) = -y(p_1) \, d x(p_1) \qquad \text{and} \qquad \omega_{0,2}(p_1, p_2) = B(p_1,p_2) 
\]
and for $2g-2+n>0$ they are defined recursively via the following equation.
\[
\omega_{g,n}(p_1, p_L) \hspace{-1mm}=\hspace{-3mm} \sum_{_{d x(\alpha) = 0}} \hspace{-2.5mm}\mathop{\text{Res}}_{p=\alpha} K(p_1, p)\hspace{-.8mm} \Bigg[\hspace{-.5mm} \omega_{g-1,n+1}(p, \hat{p}, p_L)\hspace{-.5mm} + \hspace{-4.5mm}\mathop{\sum_{g_1+g_2=g}}_{I \sqcup J = L}^\circ\hspace{-4mm} \omega_{g_1,|I|+1}(p, p_I) \, \omega_{g_2,|J|+1}(\hat{p}, p_J)\hspace{-.5mm} \Bigg]
\]
Here, we use the notation $L = \{2, 3, \ldots, n\}$ and $p_I = \{p_{i_1}, p_{i_2}, \ldots, p_{i_k}\}$ for $I = \{i_1, i_2, \ldots, i_k\}$. The outer summation is over the zeroes $\alpha$ of $dx$ and $p \mapsto \hat{p}$ is the involution defined locally near $\alpha$ satisfying $x(\hat{p}) = x(p)$ and $\hat{p} \neq p$. The symbol $\circ$ over the inner summation means that we exclude any term that involves $\omega_{0,1}$. Finally, the recursion kernel is given by
\[
K(p_1,p) = -\frac{1}{2}\frac{\int_{\hat{p}}^p \omega_{0,2}(p_1, \,\cdot\,)}{[y(p)-y(\hat{p})] \, d x(p)}.
\]
which is well-defined in the vicinity of each zero of $dx$.   It acts on differentials in $p$ and produces differentials in $p_1$ since the quotient of a differential in $p$ by the differential $dx(p)$ is a meromorphic function.  For $2g-2+n>0$, each $\omega_{g,n}$ is a symmetric tensor product of meromorphic 1-forms on $C^n$ with residueless poles at the zeros of $dx$ and holomorphic elsewhere.  A zero $\alpha$ of $dx$ is {\em regular}, respectively irregular, if $y$ is regular, respectively has a simple pole, at $\alpha$.  A spectral curve is irregular if it contains an irregular zero of $dx$.  The order of the pole in each variable of $\omega_{g,n}$ at a regular, respectively irregular, zero of $dx$ is $6g-4+2n$, respectively $2g$.  Define $\Phi(p)$ up to an additive constant by $d\Phi(p)=y(p)dx(p)$.  For $2g-2+n>0$, the invariants satisfy the dilaton equation~\cite{EOrInv}
\[
\sum_{\alpha}\Res_{p=\alpha}\Phi(p)\, \omega_{g,n+1}(p,p_1, \ldots ,p_n)=(2g-2+n) \,\omega_{g,n}(p_1, \ldots, p_n),
\] 
where the sum is over the zeros $\alpha$ of $dx$.  This enables the definition of the so-called {\em symplectic invariants}
\[ F_g=\sum_{\alpha}\Res_{p=\alpha}\Phi(p)\omega_{g,1}(p).\]
The correlators $\omega_{g,n}$ are normalised differentials of the second kind in each variable since they have zero $\ca$-periods, and poles only at the zeros $\cp_\alpha$ of $dx$ of zero residue.  Their principal parts are skew-invariant under the local involution $p\mapsto\hat{p}$.  A basis of such normalised differentials of the second kind is constructed from $x$ and $B$ in the following definition. 
\begin{definition}\label{auxdif}
For a Riemann surface $C$ equipped with a meromorphic function $x:C\to\bc$ and bidifferential $B(p_1,p_2)$ define the auxiliary differentials on $C$ as follows.  For each zero $\cp_\alpha$ of $dx$,  define
\begin{equation}  \label{Vdiff}
V^\alpha_0(p)=B(\cp_\alpha,p),\quad V^\alpha_{k+1}(p)=-d\left(\frac{V^\alpha_k(p)}{dx(p)}\right),\ \alpha=1,...,N,\quad k=0,1,2,...
\end{equation}
where evaluation $B(\cp_\alpha,p)$ at  $\cp_\alpha$ is given in Definition~\ref{evaluationform}.
\end{definition}
The correlators $\omega_{g,n}$ are polynomials in the auxiliary differentials $V^\alpha_k(p)$.  To any spectral curve $S$, one can define a partition function $Z^S$ by assembling the polynomials built out of the correlators $\omega_{g,n}$ \cite{DOSSIde,EynInv,NScPol}.
\begin{definition}  \label{TRpart}
$$Z^S(\hbar,\{u^{\alpha}_k\}):=\left.\exp\sum_{g,n}\frac{\hbar^{g-1}}{n!}\omega^S_{g,n}\right|_{V^{\alpha}_k(p_i)=u^{\alpha}_k}.
$$
\end{definition}
As usual define $F_g$ to be the contribution from $\omega_{g,n}$:
$$\log Z^S(\hbar,\{u^{\alpha}_k\})=\sum_{g\geq 0}\hbar^{g-1}F_g^S(\{u^{\alpha}_k\}).$$

\subsubsection{From topological recursion to Givental's construction}   \label{sec:TR2Giv}
The input data for Givental's construction is a triple $(R(z),T(z),\un)\in L^{(2)}GL(N,\bc)\times z^2\bc^N[[z]]\times \bc^N$.  Its output is a CohFT $\Omega$, and its partition function $Z_\Omega(\hbar,\{t^{\alpha}_k\})$.  The input data for topological recursion is a spectral curve $S=(C,x,y,B)$.  Its output is the correlators $\omega_{g,n}$ which can be assembled into a partition function $Z^S(\hbar,\{t^{\alpha}_k\})$. 

From a compact spectral curve define a triple
$$S=(C,x,y,B)\to(R(z),T(z),\un)\in L^{(2)}GL(N,\bc)\times z\bc^N[[z]]\times\bc^N$$ 
by
$$(C,x,B)\mapsto R(z)\in L^{(2)}GL(N,\bc)$$
via \eqref{eq:BtoR}, 
\begin{equation} \label{ytoU}
\un^i=\left\{\begin{array}{ll}dy(\cp_\alpha),&\cp_\alpha\text{ regular}\\
(ydx)(\cp_\alpha),&\cp_\alpha\text{ irregular}\end{array}\right.
\end{equation}
which is the unit in normalised canonical coordinates, and 
\begin{equation} \label{ytoT}
T(z)^\alpha=\left\{\begin{array}{ll}z\left(\un^\alpha- \frac{1}{\sqrt{2\pi z}}\int_{\Gamma_\alpha}dy(p)\cdot e^{\frac{x(\cp_\alpha)-x(p) }{z}}\right),&\cp_\alpha\text{ regular}\\
\un^\alpha- \frac{1}{\sqrt{2\pi z}}\int_{\Gamma_\alpha}y(p)dx(p)\cdot e^{\frac{x(\cp_\alpha)-x(p)}{z}},&\cp_\alpha\text{ irregular}\end{array}\right.
\end{equation}
Note that 
$$\lim_{z\to0}\frac{1}{\sqrt{2\pi z}}\int_{\Gamma_\alpha}dy(p)\cdot e^{\frac{x(\cp_\alpha)-x(p)}{ z}}=\left\{\begin{array}{ll}dy(\cp_\alpha),&\cp_\alpha\text{ regular}\\(ydx)(\cp_\alpha),&\cp_\alpha\text{ irregular}\end{array}\right.$$ 
which defines $\un$, hence the right hand side of \eqref{ytoT} lives in $z^2\bc^N[[z]]$, respectively $z\bc^N[[z]]$, when $\cp_\alpha$ is regular, respectively irregular.  When $\Omega$ is a CohFT with flat identity---see \eqref{cohforget} in Section~\ref{sec:cohft}---given by $\un\in\bc^N$, then $\un$ determines the translation via $T(z)=z\left(\un-R^{-1}(z)\un\right)\in z^2\bc^N[[z]]$.
In this special case $y$ satisfies
\begin{equation} \label{ytoTunit}
(R^{-1}(z)\un)^\alpha=\sum_{k=1}^N R^{-1}(z)^\alpha_k \cdot\Delta_k^{1/2} = \frac{1}{\sqrt{2\pi z}}\int_{\Gamma_\alpha}dy(p)\cdot e^{\frac{x(\cp_\alpha)-x(p) }{ z}}
\end{equation}
which uniquely determines $y$ from its first order data $\{dy(\cp_\alpha)\}$ at each $\cp_\alpha$.

The map $(C,x,y,B)\mapsto(R(z),T(z),\un)$ produces the left vertical arrow in Figure~\ref{fig:cons} and its generalisation to irregular spectral curves, i.e. a correspondence between the input data, and via the graphical construction \eqref{givpart} this produces the same output $Z_\Omega(\hbar,\{t^{\alpha}_k\})=Z^S(\hbar,\{t^{\alpha}_k\})$ which is the main result of \cite{DOSSIde} stated in the following theorem. 
\begin{theorem}[\cite{DOSSIde}]   \label{DOSS}
Given a CohFT $\Omega$ built from 
\[R(z)\in L^{(2)}GL(N,\bc),\quad T(z)\in z^2\bc^N[[z]],\quad \un\in\bc^N
\] via Definition~\ref{givact}, there exists a local spectral curve \
\[S=(C,x,y,B)\mapsto(R(z),T(z),\un)\] 
on which $x$ and $B$ correspond to $R(z)$ via Definition~\ref{BtoR} and $y$ corresponds to $T(z)$ and $\un$ via \eqref{ytoT} and \eqref{ytoU}, giving the partition function of the CohFT
$$Z_\Omega(\hbar,\{t^{\alpha}_k\})=Z^S(\hbar,\{t^{\alpha}_k\}).$$
\end{theorem}
In general, the spectral curve $S$ in Theorem~\ref{DOSS} is a local spectral curve which is a collection of disks neighbourhoods of zeros of $dx$ on which $B$ and $y$ are define locally, although we only consider compact spectral curves $S$ in this paper.  Theorem~\ref{DOSS} was proven only in the case $T(z)=z\left(\un-R^{-1}(z)\un\right)$ in \cite{DOSSIde} but it has been generalised to allow any $T(z)\in z^2\bc^N[[z]]$---see \cite{CNoTop,LPSZChi}.
We will use the converse of Theorem~\ref{DOSS} proven in \cite{DNOPSPri}, beginning instead from $S$.  Theorem~\ref{DOSS} was also generalised in \cite{CNoTop} to show that the operators $\hat{\Psi}$, $\hat{R}$ and $\hat{T}$ acting on copies of $Z^{\text{BGW}}$ analogous to \eqref{givpart}  
arises by applying topological recursion to an irregular spectral curve.   Equivalently, periods of the correlators of an irregular spectral curve store linear combinations of coefficients of $\log Z^{\text{BGW}}$.  The appearance of $Z^{\text{BGW}}$ is due to its relationship with topological recursion applied to the curve $x=\frac{1}{2}z^2$, $y=\frac{1}{z}$ \cite{DNoTop}.

\subsubsection{Spectral curve examples}
We demonstrate Theorem~\ref{DOSS} with four key examples of rational spectral curves equipped with the bidifferential $B(p_1,p_2)$ given by the Cauchy kernel. The spectral curves in the Examples~\ref{specAiry} and \ref{specBes}, denoted  $S_{\text{Airy}}$ and $S_{\text{Bes}}$, have partition functions $Z^{\text{KW}}$ and $Z^{\text{BGW}}$ respectively.  
Any spectral curve at regular, respectively irregular, zeros of $dx$ is locally isomorphic to $S_{\text{Airy}}$, respectively $S_{\text{Bes}}$.   
A consequence is that the tau functions $Z^{\text{KW}}$ and $Z^{\text{BGW}}$ are fundamental to the correlators produced from topological recursion.  Moreover, the topological recursion partition function $Z^S$ is constructed via \eqref{givpart}, using a product of copies of $Z^{\text{KW}}$ {\em and} copies of $Z^{\text{BGW}}$, as in \eqref{despart}, where $R$ and $T$ are obtained from the spectral curve as described in Section~\ref{sec:TR2Giv}.  The third example, given by Theorem~\ref{LPSZ}, brings together $Z^{\text{KW}}$ and $Z^{\Theta}$ and conjecturally $Z^{\text{BGW}}$ in the limit. Proposition~\ref{givtheta}, which gives the relationship between the Givental construction of a semisimple CohFT $\Omega$ and its associated $\Omega^{\text{BGW}}$, has an elegant consequence for spectral curves.  This is demonstrated explicitly in the fourth example which shows the relationship between the spectral curves of a CohFT $\Omega^{A_2}$ associated to the $A_2$ singularity and $(\Omega^{A_2})^{\text{BGW}}$.

Examples~\ref{specAiry} and \ref{specBes} below use the differentials 
\[\xi_m(z)=(2m+1)!!z^{-(2m+2)}dz\] 
defined by \eqref{Vdiff} for $x=\frac12z^2$ with respect to a global rational parameter $z$ for the curve $C\cong\bc$.  

\begin{example}   \label{specAiry}
Topological recursion applied to the Airy curve 
$$
S_{\text{Airy}}=\left(\bc,x=\frac{1}{2}z^2,\ y=z,\ B=\frac{dzdz'}{(z-z')^2}\right)
$$
produces correlators which are proven in \cite{EOrTop} to store intersection numbers
$$\omega_{g,n}^{\text{Airy}}=\sum_{\vec{m}\in\bz_+^n}\int_{\overline{\modm}_{g,n}}\prod_{i=1}^n\psi_i^{m_i}(2m_i+1)!!\frac{dz_i}{z_i^{2m_i+2}}
$$
and the coefficient is non-zero only for $\sum_{i=1}^n m_i=3g-3+n$.  Hence
\begin{align*}
Z^{\text{KW}}(\hbar,t_0,t_1,...)&=Z^{S_{\text{Airy}}}(\hbar,t_0,t_1,...)=\left.\exp\sum_{g,n}\frac{\hbar^{g-1}}{n!}\omega_{g,n}^{\text{Airy}}\right|_{\xi_m(z_i)=t_m}\\
&=\exp\sum_{g,n,\vec{m}}\frac{\hbar^{g-1}}{n!}\int_{\overline{\modm}_{g,n}}\prod_{i=1}^n\left(\psi_i^{m_i} t_{m_i}\right).
\end{align*}
\end{example}
\begin{example}   \label{specBes}
Topological recursion applied to the Bessel curve
$$
S_{\text{Bes}}=\left(\bc,x=\frac{1}{2}z^2,\ y=\frac{1}{z},\ B=\frac{dzdz'}{(z-z')^2}\right)
$$
produces correlators
$$
\omega_{g,n}^{\text{Bes}}=\sum_{\vec{k}\in\bz_+^n}b_g(m_1,...,m_n)\prod_{i=1}^n(2m_i+1)!!\frac{dz_i}{z_i^{2m_i+2}}
$$
where $b_g(m_1,...,m_n)\neq 0$ only for $\sum_{i=1}^n m_i=g-1$.    It is proven in \cite{DNoTop} that
$$Z^{\text{BGW}}(\hbar,t_0,t_1,...)=Z^{S_{\text{Bes}}}(\hbar,t_0,t_1,...)=\left.\exp\sum_{g,n}\frac{\hbar^{g-1}}{n!}\omega_{g,n}^{\text{Bes}}\right|_{\xi_m(z_i)=t_m}.
$$
\end{example}

For the next example define the following collection of differentials $\xi^{\alpha}_m(z,t)$ using $x=\frac12z^2-t\cdot\log z$ by 
\begin{align}   \label{chidiff}
\xi^0_{-1}(z,t)&=t^{-1/2}zdz,\quad \xi^1_{-1}(z,t)=dz,\\
\nonumber
 \xi^\alpha_{m+1}(z,t)&=-d\left(\frac{\xi^\sigma_{m}(z,t)}{dx(z)}\right),\ \sigma=0,1,\quad m=-1,0,1,2,...
\end{align}
For $m\geq 0$, these are linear combinations of the $V^i_m(p)$ defined in \eqref{Vdiff}.  
The following theorem uses the Chern polynomial
$$c\left(E_{g,n}^{\vec{\sigma}},t\right)=1+t\cdot c_1(E_{g,n}^{\vec{\sigma}})+t^2\cdot c_2(E_{g,n}^{\vec{\sigma}})+...\in H^*(\overline{\modm}_{g,n,\vec{\sigma}}^{\rm spin},\bq),\quad \vec{\sigma}\in\{0,1\}^{n}.
$$
\begin{theorem}[\cite{LPSZChi}] \label{LPSZ}
Topological recursion applied to the spectral curve 
\begin{equation}   \label{specfam}
x=\frac12z^2-t\cdot\log z,\quad y=z^{-1},\quad B=\frac{dzdz'}{(z-z')^2}
\end{equation}
produces correlators $\omega_{g,n}$ satisfying
\begin{equation}   \label{totalchern}
\omega_{g,n}(t,z_1,...,z_n)=\sum_{\vec{\sigma},\vec{m}} (-1)^nt^{2g-2+n} 2^{1-g}\hspace{-.5mm}\int_{\overline{\modm}_{g,n}}\hspace{-.5mm}p_*c\left(E_{g,n}^{\vec{\sigma}},\frac{2}{t}\right)\prod_{i=1}^n\psi_i^{m_i}\xi_{m_i}^{\sigma_i}(z_i,t).
\end{equation}
\end{theorem}
\begin{proof}[Proof of Theorem~\ref{LPSZ}]
Theorem~\ref{LPSZ} is a specialisation of a theorem in \cite{LPSZChi} which applies to a generalisation of the moduli space of spin curves to the moduli space of $r$-spin curves 
$$\overline{\modm}_{g,n}^{1/r}=\{(\cc,\theta,p_1,...,p_n,\phi)\mid \phi:\theta^r\stackrel{\cong}{\longrightarrow}\omega_{\cc}^{\text{log}}\}.
$$
For any $s\in\bz$, there is a line bundle $\ce$ on the universal $r$-spin curve over $\overline{\modm}_{g,n}^{1/r}$ with fibres given by the universal $r$th root of $\left(\omega_{\cc}^{\text{log}}\right)^s$.  Its derived push-forward $R^*\pi_*\ce$ defines a virtual bundle over $\overline{\modm}_{g,n}^{1/r}$.  For example, when $s=1$ and $r=1$, $-R^*\pi_*\ce$ is the Hodge bundle and when $s=-1$ and $r=2$, $-R^*\pi_*\ce=E_{g,n}$ coincides with Definition~\ref{obsbun} (where $\ce^\vee$ has now become $\ce$ due to $s=-1$.)  Note that \cite{LPSZChi} considers $r$th roots of $\displaystyle\left(\omega_C^{\text{log}}\right)^s\left(-\sum_{i=1}^n\sigma_ip_i\right)$ for $C$ the underlying coarse curve of $\cc$ with forgetful map $\rho:\cc\to C$.  The $r$th roots in \cite{LPSZChi} coincide with the push-forward $|\theta|=\rho_*\theta$ which is the locally free sheaf of $\bz_2$-invariant sections of the push-forward sheaf of $\theta$, and the isotropy representation at $p_i$ determines $\sigma_i$ as described in Section~\ref{existence}.  For $r=2$, i.e. $\theta^2\cong\omega_{\cc}^{\text{log}}$, at any point $p_i$ banded by $1/2$ the push-forward locally satisfies $|\theta|^2=\omega_C(2p_i)=\omega_C^{\text{log}}(p_i)$ hence $(|\theta|^\vee)^2=\left(\omega_C^{\text{log}}\right)^{-1}(-p_i)$ which corresponds to $\sigma_i=1$.  At any point $p_i$ banded by $0$ the push-forward does not change local degree and corresponds to $\sigma_i=0$.   

The Chern character of the virtual bundle $-R^*\pi_*\ce$ is given by Chiodo's generalisation of Mumford's formula for the Chern character of the Hodge bundle.   For $\sigma\in\{0,1,...,r-1\}$, let $j_\sigma:\text{Sing}_\sigma\to\overline{\modm}_{g,n}^{1/r}$ be the map from the singular set of the universal spin curve banded by $\sigma/r$ where now the local isotropy is $\bz_n$.  Let $B_m(x)$ be the $m$th Bernoulli polynomial.  Chiodo proved the following formula in \cite{ChiTow}:
\begin{align*}
\text{ch}(R^*\pi_*\ce)=\sum_{m\geq 0}&\Big(\frac{B_{m+1}(s/r)}{(m+1)!}\kappa_m-\sum_{i=1}^n\frac{B_{m+1}(m_i/r)}{(m+1)!}\psi_i^m\\
&+\frac{r}{2}\sum_{\sigma=0}^{r-1}\frac{B_{m+1}(\sigma/r)}{(m+1)!}(j_\sigma)_*\frac{\psi_+^m+(-1)^{m-1}\psi_-^m}{\psi_++\psi_-}\Big)
\end{align*}
The total Chern class of a virtual bundle $c(E-F):=c(E )/c(F)$ can be calculated from its Chern character and in this case is given by
$$c(-R^*\pi_*\ce)=\exp\left(\sum_{m=1}^\infty(-1)^m(m-1)!\text{ch}_m(R^*\pi_*\ce)\right).
$$
The components of $\overline{\modm}_{g,n}^{1/r}$ are given by $\overline{\modm}_{g,n,\vec{\sigma}}^{1/r}$ for $\vec{\sigma}\in\bz_r^n$.  The push-forward of the restriction of $c(-R^*\pi_*\ce)$ to a component is known as the Chiodo class 
$$C_{g,n}(r,s;\vec{\sigma}):=p_*c\left(-R^*\pi_*\ce|_{\overline{\modm}_{g,n,\vec{\sigma}}^{1/r}}\right)\in H^*(\overline{\modm}_{g,n},\bq)
$$
The sum of this push-forward over all components of $\overline{\modm}_{g,n}^{1/r}$ is expressed as a weighted sum over stable graphs in \cite{JPPZDou} which encodes a twisted loop group action as described in Section~\ref{givental}, with edge and vertex weights proven in \cite[Theorem 4.5]{LPSZChi} to exactly match the edge and vertex weights arising from the following spectral curve:
$$\hat{x}=z^r-\log z,\quad\hat{y}=\frac{r^{1+\frac{s}{r}}}{s}z^s,\quad B=\frac{dzdz'}{(z-z')^2}.
$$
In particular, the term $\exp\Big(-\displaystyle{\sum_m}\textstyle\frac{B_{m+1}(s/r)}{m(m+1)}\kappa_m\Big)$ which arises from the $\displaystyle{\sum_m}\textstyle\frac{B_{m+1}(s/r)}{(m+1)!}\kappa_m$ terms in Chiodo's formula exactly matches the local expansion of $dy$.  More precisely by \cite[Lemma 4.1]{LPSZChi}
\begin{equation}  \label{localy} 
\frac{1}{\sqrt{2\pi \hbar}}\int_{\Gamma_\alpha}dy(p)\cdot e^{\frac{x(\cp_\alpha)-x(p) }{ \hbar}}\sim dy(\cp_\alpha)\exp\left(-\displaystyle{\sum_m}\textstyle\frac{B_{m+1}(s/r)}{m(m+1)}(-\hbar)^m\right)
\end{equation}
where $\sim$ means the asymptotic expansion in the limit $\hbar\to 0$.  

Hence topological recursion applied to this spectral curve produces correlators with expansion in terms of the local coordinate $e^{-\hat{x}_i}=e^{-\hat{x}(z_i)}=z_ie^{-z_i^r}$ around $z_i=0$,
\begin{equation}  \label{TRchiodo} 
\hat{\omega}_{g,n}(z_1,...,z_n)\sim\sum_{\vec{k}\in\bz_+^n}\prod_{i=1}^nc(k_i)r^{\frac{(k_i)_r}{r}}d(e^{-k_i\hat{x}_i})\int_{\overline{\modm}_{g,n}}\frac{C_{g,n}(r,s;(-\vec{k})_r)}{\prod_{i=1}^n(1-\frac{k_i}{r}\psi_i)}
\end{equation}
where $\sim$ means expansion in a local coordinate, $(-\vec{k})_r\in\{0,...,r-1\}^n$ the residue class of $-\vec{k}$ modulo $r$, and
$$ c(k)=\frac{k^{\lfloor\frac{k}{r}\rfloor}}{\lfloor\frac{k}{r}\rfloor!}.
$$
We have used $\hat{x}=z^r-\log z$ and $y=\frac{r^{1+\frac{s}{r}}}{s}z^s$ here, rather than $\hat{x}=-z^r+\log z$ and $y=z^s$ used in \cite{LPSZChi}, because the convention for the kernel $K(p_1,p)$ used here differs by sign from \cite{LPSZChi}, and also to remove a factor of $\left(\frac{r^{1+\frac{s}{r}}}{s}\right)^{2-2g-n}$  from the correlators.   Chiodo's formula and the asymptotic expansion \eqref{localy} are true for any $s\in\bz$, hence \eqref{TRchiodo} holds for any $s\in\bz$, although it is stated only for $s\geq 0$ in \cite{LPSZChi}. 

In \cite{LPSZChi} $(-\vec{k})_r\in\{1,...,r\}^n$, however replacing $k_i=r$ by $k_i=0$ leaves the Chiodo class invariant since it does not change the component, rather it twists the universal bundle $\ce$ over the component resulting in adding a direct summand of a trivial bundle to the virtual bundle $-R^*\pi_*\ce$ which does not affect the total Chern class.  The invariance of the total Chern class, or equivalently the positive degree terms of the Chern character, can also be seen in Chiodo's formula via properties of the Bernoulli polynomials.

We will use \eqref{TRchiodo} in the case $r=2$.  Define
$$\hat{\xi}^0_{-1}=2zdz,\quad \hat{\xi}^1_{-1}=dz,\quad \hat{\xi}^\sigma_{m}(z)=-d\left(\frac{\hat{\xi}^\sigma_{m-1}(z)}{d\hat{x}(z)}\right),\quad \sigma\in\{0,1\},\ m\in\{0,1,2,...\}
$$
which have local expansion at $z=0$ given by
$$\hat{\xi}^\sigma_{m}(z)\sim \hspace{-3mm}\mathop{\sum_{k\in\bz_+}}_{^{k\equiv\sigma(\text{mod\ }2)}}\hspace{-2mm}k^{m}c(k)d(e^{-k\hat{x}}).
$$
Each $\psi_i$ in the denominator of the right hand side of \eqref{TRchiodo} produces monomials $(\frac{1}{2}k_i\psi_i)^{m_i}$, hence \eqref{TRchiodo} with $r=2$ becomes
$$\hat{\omega}_{g,n}(z_1,...,z_n)=\sum_{\vec{\sigma},\vec{m}}\int_{\overline{\modm}_{g,n}}C_{g,n}(2,s;\vec{\sigma})\prod_{i=1}^n\psi_i^{m_i}\hat{\xi}_{m_i}^{\sigma_i}(z_i)2^{\frac12\sigma_i-m_i}.
$$

Change $(\hat{x},\hat{y})\mapsto(x,y)$ by
$$x=t\hat{x}\left(\frac{z}{\sqrt{2t}}\right)-\frac12t\log(2t)=\frac12z^2-t\cdot\log z,\quad y=\frac{s}{2}t^{\frac{s}{2}}\hat{y}\left(\frac{z}{\sqrt{2t}}\right)=z^s.
$$
The differentials defined in \eqref{chidiff} using $x$ are given by
$$\xi^\sigma_{m}(z,t)=t^{-m-\frac12}2^{\frac{\sigma}{2}}\hat{\xi}^\sigma_{m}\left(\frac{z}{\sqrt{2t}}\right).
$$
Hence
\begin{align*}
\omega_{g,n}(t,z_1,...,&z_n)=\left(\frac{s}{2}t^{\frac{s}{2}+1}\right)^{2-2g-n}\hat{\omega}_{g,n}\left(\frac{z_1}{\sqrt{2t}},...,\frac{z_n}{\sqrt{2t}}\right)\\
&=\left(\frac{s}{2}t^{\frac{s}{2}+1}\right)^{2-2g-n}\sum_{\vec{\sigma},\vec{m}}\int_{\overline{\modm}_{g,n}}\hspace{-2mm}C_{g,n}(2,s;\vec{\sigma})\prod_{i=1}^n\psi_i^{m_i}\hat{\xi}_{m_i}^{\sigma_i}\left(\frac{z_i}{\sqrt{2t}}\right)2^{\frac12\sigma_i-m_i}\\
&=\left(\frac{s}{2}t^{\frac{s}{2}+1}\right)^{2-2g-n}\sum_{\vec{\sigma},\vec{m}}\int_{\overline{\modm}_{g,n}}\hspace{-2mm}C_{g,n}(2,s;\vec{\sigma})\prod_{i=1}^nt^{m_i+\frac12}\psi_i^{m_i}\xi_{m_i}^{\sigma_i}(z_i)2^{-m_i}\\
&=\left(\frac{s}{2}t^{\frac{s}{2}+1}\right)^{2-2g-n}t^{\frac{n}{2}}\sum_{\vec{\sigma},\vec{m}}\int_{\overline{\modm}_{g,n}}\hspace{-2mm}C_{g,n}(2,s;\vec{\sigma})\prod_{i=1}^n\left(\frac{t}{2}\right)^{m_i}\psi_i^{m_i}\xi_{m_i}^{\sigma_i}(z_i)\\
&=\sum_{\vec{\sigma},\vec{m}}t^{\frac12(1-s)(2g-2+n)}2^{1-g}s^{2-2g-n}\int_{\overline{\modm}_{g,n}}\hspace{-2mm}C_{g,n}(2,s;\vec{\sigma},\frac{2}{t})\prod_{i=1}^n\psi_i^{m_i}\xi_{m_i}^{\sigma_i}(z_i,t)
\end{align*}
where the last equality uses $(t/2)^{\sum m_i}=(t/2)^{3g-3+n-\deg}$ for the degree operator $\deg c_k(E_{g,n}^{\vec{\sigma}})=k$ then $(t/2)^{-\deg}$ is absorbed into the Chern polynomial.  Set $s=-1$ to get the desired result.
\end{proof}
The classes $\Theta_{g,n}$ arise in the limit
\[\lim_{t\to 0}\omega_{g,n}(t,z_1,...,z_n)=\sum_{\vec{m}}\int_{\overline{\modm}_{g,n}}\hspace{-3mm}\Theta_{g,n}\prod_{i=1}^n\psi_i^{m_i}\xi_{m_i}(z)
\]
for  $\xi_m(z)=(2m+1)!!z^{-(2m+2)}dz.$  We explain the relationship of this limit with Conjecture~\ref{kdvconj} in Proposition~\ref{TRlim}.

\subsubsection{$A_2$ singularity}  \label{A2spec}
In this section we calculate the spectral curves of the CohFT $\Omega^{A_2}$ and $(\Omega^{A_2})^\Theta$.  We begin with a general result relating the spectral curve of any semisimple CohFT $\Omega$ with the spectral curve of $\Omega^{\text{BGW}}$.
\begin{proposition}   \label{specOmegaBGW}
Given a semisimple CohFT $\Omega$ with partition function $Z_\Omega(\hbar,\{t^{\alpha}_k\})$ encoded by the spectral curve 
\[S=(C,x,y,B)\] 
via Theorem~\ref{DOSS} then $Z^{\text{BGW}}_\Omega(\hbar,\{t^{\alpha}_k\})$is encoded by the spectral curve 
\[\hat{S}=(C,x,\hat{y}=\frac{dy}{dx},B).\]
\end{proposition}
\begin{proof} 
Note that the spectral curves $S$ and $\hat{S}$ share the same $(C,x,B)$ and hence produce the same operator $\hat{R}(z)$ used in the construction of both $Z_\Omega$ and $Z^{\text{BGW}}_\Omega$.

Proposition~\ref{givtheta} shows that a shift in the translation operator $T(z)\mapsto\frac{1}{z}T(z)$ combined with replacing each copy of $Z^{\text{KW}}(\hbar,\{t_k\})$ in \eqref{givpart} by a copy of $Z^\Theta(\hbar,\{t_k\})$ produces the partition function of $\Omega^\Theta$.  It relied upon the homogeneity property \eqref{dilaton} satisfied by $Z^\Theta(\hbar,\{t_k\})$.  But $Z^{\text{BGW}}(\hbar,\{t_k\})$ also satisfies \eqref{dilaton} hence an identical argument to that in Proposition~\ref{th:dilaton} proves that
for a semisimple CohFT $\Omega$, the partition function $Z^{\text{BGW}}_{\Omega}(\hbar,\{t^{\alpha}_k\})$ is obtained by replacing each copy of $Z^{\text{KW}}(\hbar,\{t_k\})$ in \eqref{givpart} by a copy of $Z^{\text{BGW}}(\hbar,\{t_k\})$ and replacing the translation operator by $T(z)\mapsto\frac{1}{z}T(z)$.

Given an irregular spectral curve, it is proven in \cite{CNoTop} that its partition function is obtained from \eqref{givpart} with translation operator given by \eqref{ytoT}
\[T(z)^\alpha=\left\{\begin{array}{ll}z\left(\un^\alpha- \frac{1}{\sqrt{2\pi z}}\int_{\Gamma_\alpha}dy(p)\cdot e^{\frac{x(\cp_\alpha)-x(p) }{ z}}\right),&\cp_\alpha\text{ regular}\\
\un^\alpha- \frac{1}{\sqrt{2\pi z}}\int_{\Gamma_\alpha}y(p)dx(p)\cdot e^{\frac{x(\cp_\alpha)-x(p) }{z}},&\cp_\alpha\text{ irregular}\end{array}\right. \]
Given a semisimple CohFT $\Omega$ encoded by the regular spectral curve 
$S=(C,x,y,B)$, define $\hat{y}=\frac{dy}{dx}$.  Then we see that since $dy=\hat{y}dx$ the translation operator shifts by $T(z)^\alpha\mapsto\frac{1}{z}T(z)^\alpha$ which proves that $\Omega^{\text{BGW}}$ is encoded by the spectral curve 
$\hat{S}=(C,x,\hat{y}=\frac{dy}{dx},B)$.
\end{proof}

Define the spectral curves  
\begin{equation}  \label{BGWA2}
\begin{array}{ccc}
S_{A_2}&=&\left(\bc,x=z^3-3z,\ y=z\sqrt{-3},\ B=\frac{dzdz'}{(z-z')^2}\right)   \\
&&\\
S^{\text{BGW}}_{A_2}&=&\left(\bc,x=z^3-3z,\ \hat{y}=\frac{\sqrt{-3}}{3z^2-3},\ B=\frac{dzdz'}{(z-z')^2}\right).\end{array}  
\end{equation}
The partition functions associated to $S=S_{A_2}$ defined in \eqref{A2sing} and $S=S^{\Theta}_{A_2}$ are built out of correlators $\omega^S_{g,n}$ by
$$Z^S(\hbar,\{t^{\alpha}_k\})=\left.\exp\sum_{g,n}\frac{\hbar^{g-1}}{n!}\omega^S_{g,n}\right|_{\xi^{\alpha}_k(z_i)=t^{\alpha}_k}
$$  
using the differentials $\xi^{\alpha}_k(z)$ defined on $\bc$ by 
\begin{equation}   \label{flatdiff}
\xi^{\alpha}_0=\frac{dz}{(1-z)^2}-\frac{(-1)^{\alpha}dz}{(1+z)^2},\quad \xi^\alpha_{k+1}(p)=d\left(\frac{\xi^\alpha_k(p)}{dx(p)}\right),\ \alpha\in\{1,2\},\  k\in\bn.
\end{equation}  
These are linear combinations of the $V^i_k(p)$ defined in \eqref{Vdiff} with $x=z^3-3z$. The $V^i_k(p)$ correspond to normalised canonical coordinates while the $\xi^{\alpha}_k(p)$ correspond to flat coordinates.  We have
\[ Z_{\Omega^{A_2}}=Z^{S_{A_2}},\qquad Z_{(\Omega^{A_2})^{\Theta}}=Z^{S^{\Theta}_{A_2}}.
\]
The equality $Z_{A_2}=Z^{S_{A_2}}$ was proven in \cite{DNOPSDub} hence $Z_{(\Omega^{A_2})^{\Theta}}=Z^{S^{\Theta}_{A_2}}$ by Proposition~\ref{specOmegaBGW}.
We verify this by giving the local expansions of $B$ and $\hat{y}$ for $S_{A_2}$ which helps to deal with different normalisations in the references.   Choose a local coordinate $t$ around $z=-1=\cp_1$ so that $x(t)=\frac12 t^2+2$.  Then
\begin{align*}
B(\cp_1,t)&=\frac{-i}{\sqrt{6}}\frac{dz}{(z+1)^2}=dt\left(t^{-2}-\frac{1}{144}+\frac{35}{41472}t^2+...+{\rm odd\ terms}\right)\\
B(\cp_2,t)&=\frac{1}{\sqrt{6}}\frac{dz}{(z-1)^2}=dt\left(-\frac{i}{24}+\frac{35i}{3456}t^2+...+{\rm odd\ terms}\right)
\end{align*}
Around $z=1=\cp_2$ the local expansions of $B(\cp_\alpha,z)$ are the same as those above, up to sign.
The odd terms are annihilated by the Laplace transform, and we get
\begin{align*}R^{-1}(z)^\alpha_\alpha &= -\frac{\sqrt{z}}{\sqrt{2\pi}}\int_{\Gamma_\alpha} B(\cp_\alpha,t)\cdot e^{\frac{-\frac12t^2}{ z}}=1-(-1)^\alpha\frac{1}{144}z-\frac{35}{41472}z^2+...\\
R^{-1}(z)^\alpha_{3-\alpha} &= -\frac{\sqrt{z}}{\sqrt{2\pi}}\int_{\Gamma_\alpha} B(\cp_{3-\alpha},t)\cdot e^{\frac{-\frac12t^2}{ z}}=\frac{i}{24}z+(-1)^\alpha\frac{35i}{3456}z^2+...
\end{align*}
Hence $R^{-1}(z)=I-R_1z+(R_1^2-R_2)z^2+...=I-R_1^Tz+R_2^Tz^2+...$ gives 
$$ R_1=\frac{1}{144}\left(\begin{array}{cc}-1&-6i\\-6i&1\end{array}\right),\quad
R_2=\frac{35}{41472}\left(\begin{array}{cc}-1&12i\\-12i&-1\end{array}\right)
$$
which determines all other $R_k$ via \eqref{teleman} and agrees with \eqref{A2RT} for $\Omega^{A_2}$.

The topological field theory is defined by $\{dy(\cp_\alpha)\}$, $i=1,2$.  The translation operator $T(z)$ is determined by the (Laplace transform of the) local expansion of $y$ given by \eqref{ytoT}.  Moreover, $\Omega^{A_2}$ has flat identity, so in this case the odd expansions of $dy$ is determined by $R^{-1}(z)\un$ via \eqref{ytoTunit}, hence uniquely determined by the terms $dy(\cp_\alpha)$, $\alpha=1,2$.  This is visible on the spectral curve by the fact that the poles of $dy$ are dominated by the poles of $dx$, i.e. $dy/dx$ has poles only at the zeros $\cp_1$ and $\cp_2$ of $dx$, hence by the Cauchy formula $dy$ satisfies 
\begin{equation} \label{DOSStest} 
d\left(\frac{dy}{dx}(p)\right)=-\sum_{\alpha=1}^N \Res_{p'=\cp_\alpha}\frac{dy}{dx}(p')B(p',p) \end{equation}
which is proven in \cite{DNOPSDub} to imply \eqref{ytoTunit}.
Thus, it remains to show that $y$ defines the correct topological field theory, representing $\un$ in normalised canonical coordinates.  The local expansion of $dy=\sqrt{-3} dz$ around $\cp_1=-1$  in the local coordinate $x(t)=\frac12 t^2+2$ is:
$$ dy=\sqrt{-3} dz=\left(\frac{1}{\sqrt{2} }-\frac{5}{144\sqrt{2}}t^2+\frac{385}{124416\sqrt{2}}t^4+...+{\rm odd\ terms}\right)dt
$$
and around $\cp_2=1$ replace $t$ by $it$.
Hence the Laplace transform is:
\begin{align*}
\left\{\frac{1}{\sqrt{2\pi z}}\int_{\Gamma_\alpha}dy(p)\cdot e^{\frac{(x(\cp_k)-x(p))}{z}}\right\}&=R^{-1}(z)\un\\
=\frac{1}{\sqrt{2}}\left(\begin{array}{c}1\\i\end{array}\right)&+\frac{5}{144\sqrt{2}}\left(\begin{array}{c}-1\\i\end{array}\right)z+\frac{385}{41472\sqrt{2}}\left(\begin{array}{c}1\\i\end{array}\right)z^2+...
\end{align*}
Note that $dy(\cp_1)=\frac{1}{\sqrt{2}}=\sqrt{\un^1}$ and $dy(\cp_2)=\frac{i}{\sqrt{2}}=\sqrt{\un^2}$ 
gives the unit $\un$, hence the TFT.  
Thus $S_{A_2}\mapsto (R(z),T(z),\un)$ for $\Omega^{A_2}$ as required.

\section{Progress towards a proof of conjecture \ref{kdvconj}}   \label{conj}
A consequence of the homogeneity property \eqref{dilaton} satisfied by both partition functions $Z^{\Theta}(\hbar,t_0,t_1,...)$ and $Z^{\text{BGW}}(\hbar,t_0,t_1,...)$ is that for $g>1$ the coefficient of $\hbar^{g-1}$ of the logarithm of the partition function, i.e. its genus $g$ part, is a {\em finite} sum of rational functions.  They are both of the form:
\[
\log Z(\hbar,t_0,t_1,...)=-\frac{1}{8}\log(1-t_0)+\sum_{g=2}^\infty\hbar^{g-1}\sum_{\mu\vdash g-1}\frac{c_\mu t_\mu}{(1-t_0)^{2g-2+n}}
\]
where $t_\mu:=\prod t_{\mu_i}$ for a partition $\mu=(\mu_1,...,\mu_n)$.  Hence for each $g$ one needs only match the finite set of coefficients $c_\mu$, parametrised by partitions $\mu$ of $g-1$, of $\log Z^{\Theta}(\hbar,t_0,t_1,...)$ with those of $\log Z^{\text{BGW}}(\hbar,t_0,t_1,...)$, to determine equality.
 
The initial value $\int_{\overline{\modm}_{1,1}}\Theta_{1,1}=\frac18$ and \eqref{dilaton} produces all genus 1 terms of $\log Z^{\Theta}$, and the calculation
$\int_{\overline{\modm}_{2,1}}\Theta_{2,1}\cdot\psi_1=\frac{3}{128}$ from Example~\ref{gen2rel} together with \eqref{dilaton} produces all genus 2 terms giving:
\[ 
\log Z^{\Theta}=-\frac{1}{8}\log(1-t_0)+\hbar\frac{3}{128}\frac{t_1}{(1-t_0)^3}+O(\hbar^2).
\]
Further calculations, such as the genus 3 calculation in Appendix~\ref{sec:calc} and calculations up to $g=7$ and $n=6$ using admcycles \cite{DSZadm} prove 
\begin{equation}  \label{lowgtheta} 
\log Z^{\Theta}(\hbar,t_0,t_1,...)=\log Z^{\text{BGW}}(\hbar,t_0,t_1,...)+O(\hbar^8).
\end{equation}

Conjecture~\ref{kdvconj} is reduced to a purely combinatorial or analytic problem in the following proposition.  Recall the spectral curve \eqref{specfam} given by
\[
x=\frac12z^2-t\cdot\log z,\quad y=z^{-1},\quad B=\frac{dzdz'}{(z-z')^2}
\]
with correlators $\omega_{g,n}(t,z_1,...,z_n)$.

\begin{proposition} \label{TRlim}
Conjecture \ref{kdvconj} is equivalent to
\begin{equation}  \label{TRlimit}
\lim_{t\to 0}\omega_{g,n}(t,z_1,...,z_n)=\omega_{g,n}^{\text{Bes}}(z_1,...,z_n).
\end{equation}
\end{proposition}
\begin{proof}
By Theorem~\ref{LPSZ}
\[\omega_{g,n}(t,z_1,...,z_n)=\sum_{\vec{\sigma},\vec{m}} (-1)^nt^{2g-2+n} 2^{1-g}\hspace{-.5mm}\int_{\overline{\modm}_{g,n}}\hspace{-.5mm}p_*c\left(E_{g,n}^{\vec{\sigma}},\frac{2}{t}\right)\prod_{i=1}^n\psi_i^{m_i}\xi_{m_i}^{\sigma_i}(z_i,t)\]
which is regular in $t$ since
$$\text{rank}\ E_{g,n}^{\vec{\sigma}}=2g-2+\tfrac12(n+|\vec{\sigma}|)
$$
so the Chern polynomial has degree at most $2g-2+n$ in $t^{-1}$.  Hence for $|\vec{\sigma}|=n$
$$\lim_{t\to 0}(-1)^nt^{2g-2+n}2^{1-g}p_*c\left(E_{g,n}^{\vec{\sigma}},\frac{2}{t}\right)=
(-1)^n2^{g-1+n}p_*c_{2g-2+n}\left(E_{g,n}^{\vec{\sigma}}\right)=\Theta_{g,n}
$$
while for $|\vec{\sigma}|<n$, $\text{rank}\ E_{g,n}^{\vec{\sigma}}<2g-2+n$ so
$$\lim_{t\to 0}(-1)^nt^{2g-2+n}2^{1-g}p_*c\left(E_{g,n}^{\vec{\sigma}},\frac{2}{t}\right)=0.
$$
Thus the $t\to0$ limit  exists to give
$$\lim_{t\to 0}\sum_{\vec{\sigma},\vec{m}}\int_{\overline{\modm}_{g,n}}\hspace{-1mm}(-1)^nt^{2g-2+n}2^{1-g}p_*c\left(E_{g,n}^{\vec{\sigma}},\frac{2}{t}\right)\prod_{i=1}^n\psi_i^{m_i}\xi_{m_i}^{\sigma_i}=\sum_{\vec{m}}\int_{\overline{\modm}_{g,n}}\hspace{-3mm}\Theta_{g,n}\prod_{i=1}^n\psi_i^{m_i}\xi_{m_i}(z)
$$
for 
$$\xi_m(z)=\lim_{t\to 0}\xi^1_m(z,t)=(2m+1)!!z^{-(2m+2)}dz.$$
Also $\displaystyle\lim_{t\to 0}\xi^0_m(z,t)=0$ for $m\geq 0$. The $t\to0$ limit of the spectral curve \eqref{specfam} gives the Bessel spectral curve of Example~\ref{specBes} with correlators proven in \cite{DNoTop} to be given by
\[\omega_{g,n}^{\text{Bes}}(z_1,...,z_n)=\sum_{\vec{m}}\frac{\partial^nF^{\text{BGW}}(\hbar,\{t_k\})}{\partial t_{m_1}...\partial t_{m_n}}\prod_{i=1}^n\xi_{m_i}(z).
\]
Hence the conjectured limit \eqref{TRlimit} yields
\[\sum_{\vec{m}}\int_{\overline{\modm}_{g,n}}\hspace{-1mm}\Theta_{g,n}\prod_{i=1}^n\psi_i^{m_i}\xi_{m_i}(z)=\sum_{\vec{m}}\frac{\partial^nF^{\text{BGW}}(\hbar,\{t_k\})}{\partial t_{m_1}...\partial t_{m_n}}\prod_{i=1}^n\xi_{m_i}(z)\]
which is equivalent to Conjecture \ref{kdvconj}.
\end{proof}
The subtlety of the limit \eqref{TRlimit}, which is known up to $g=7$ for all $n$ by the verification of Conjecture \ref{kdvconj} in these cases, can be seen as follows.  
The correlators are regular in $t$, for example
\[\omega_{0,3}(t,z_1,z_2,z_3)=O(t)\quad\Rightarrow\quad\lim_{t\to 0}\omega_{0,3}(t,z_1,z_2,z_3)=0.
\]
However, the coefficients in the recursion can be irregular in $t$ i.e. blow up as $t\to 0$.  For example, in the following calculation of $\omega_{1,2}(t,z_1,z_2)$ we introduce the parameter $a$ to keep track of the contribution of $\omega_{0,3}(t,z_1,z_2,z_3)$.  We can set $a=1$ at the end.
\begin{align*}
\omega_{1,2}(t,z_1,z_2)=\sum_{dx(\alpha)=0}\Res_{z=\alpha}&K(z_1,z) \bigg[a\cdot\omega_{0,3}(t,z,\sigma_\alpha(z),z_2)\\
&  +\omega_{0,2}(z,z_2) \, \omega_{1,1}(t,\sigma_\alpha(z))
+  \omega_{0,2}(\sigma_\alpha(z),z_2) \, \omega_{1,1}(t,z) \bigg]
\end{align*}
\[\lim_{t\to 0}\omega_{1,2}(t,z_1,z_2)=\left(\frac{74a+61}{1080}\right)\frac{dz_1dz_2}{z_1^2z_2^2}
\]
This gives the expected limit of $\omega_{1,2}^{\text{Bes}}(z_1,z_2)$ when $a=1$, and shows the dependence of $\displaystyle\lim_{t\to 0}\omega_{1,2}(t,z_1,z_2)$ on $\omega_{0,3}(t,z_1,z_2,z_3)$ due to coefficients in the recursion which are irregular in $t$.

\subsection{Pixton relations}  \label{pixrel}

A collection of relations in the tautological ring $RH^*(\overline{\modm}_{g,n})$ was conjectured by Pixton and proven in \cite{PPZRel} using the CohFT $\Omega^{A_2}$.   Such tautological relations can be used to produce topological recursion relations for CohFTs such as Gromov-Witten invariants.  Similarly, the intersections of $\Theta_{g,n}$ with Pixton's relations produce topological recursion relations satisfied by the intersection numbers $\int_{\overline{\modm}_{g,n}}\Theta_{g,n}\prod_{i=1}^n\psi_i^{m_i}$.

The key idea behind the proof of Pixton's relations in \cite{PPZRel} is a degree bound on the cohomology classes \[\deg\Omega^{A_2}_{g,n}\leq\frac13(g-1+n )<3g-3+n\]
combined with Givental's construction of $\Omega^{A_2}_{g,n}$ in Definition~\ref{givact} from the triple
$(R(z),T(z),\un)\in L^{(2)}GL(N,\bc)\times z^2\bc^N[[z]]\times\bc^N$ obtained from the Frobenius manifold structure on the versal deformation space of the $A_2$ singularity, \ref{A2sing}.  Givental's construction produces $\Omega^{A_2}_{g,n}$, although it does not know about the degree bound and produces classes in the degrees where $\Omega^{A_2}_{g,n}$ vanishes.  This leads to sums of tautological classes representing the zero class, i.e. relations given by the degree $d>\frac13(g-1+n )$ part of the sum over stable graphs in \eqref{graphsum} of the form
\[ \Omega_{g,n}^{A_2}=\sum_{\Gamma\in G_{g,n}}\frac{1}{|{\rm Aut}(\Gamma)|}(\phi_{\Gamma})_*\omega^{R,T,\un}_{\Gamma}.
\]
Since $\Omega^{A_2}$ has flat identity, the push-forward classes in \eqref{graphsum} produce $\kappa$  polynomials, hence only graphs without dilaton leaves in the sum are required and the classes $\omega^{R,T,\un}_{\Gamma}$ consist of products of $\psi$ and $\kappa$ classes associated to each vertex of $\Gamma$.  The main result of \cite{PPZRel} is the construction of elements $R^d_{g,A}\in S_{g,n}$ for $A=(a_1,...,a_n)$, $a_\alpha\in\{0,1\}$ satisfying $q(R^d_{g,A})=0$ which push forward to tautological relations in $H^{2d}(\overline{\modm}_{g,n},\bq)$.   They are defined by $R^d_{g,A}=$ degree $d$ part of $\Omega_{g,n}^{A_2}(v_A)$ for a basis $\{v_0,v_1\}$.
The element $R^1_2\in H^{2}(\overline{\modm}_{2},\bq)$ is given in Example~\ref{gen2rel}.

When $n\leq g-1$ and $g>1$ we have $d=g-1>\frac13(g-1+n )$ hence there exist non-trivial relations $R^{g-1}_{g,A}$.  This produces the following sum over graphs 
\[ \Theta_{g,n}\cdot R^{g-1}_{g,A}=0
\]
which defines a relation, for each $A$, between intersection numbers of $\psi$ classes with $\Theta_{g,n}$, i.e. coefficients of $Z^\Theta(\hbar,\{t_k\})$.  This uses $\Theta_{g,n}\cdot(\phi_{\Gamma})_*=(\phi_{\Gamma})_*\Theta_{\Gamma}$ together with Remark~\ref{removekappa} to replace $\kappa$ classes by $\psi$ classes.  We saw this in Example~\ref{gen2rel}  arising from the genus two Pixton relation
\begin{equation}  \label{relg2}
\int_{\overline{\modm}_{2,1}}\Theta_{2,1}\cdot\psi_1-\frac{7}{10}\cdot\int_{\overline{\modm}_{1,1}}\Theta_{1,1}\cdot\int_{\overline{\modm}_{1,1}}\Theta_{1,1}-\frac{1}{10}\cdot\int_{\overline{\modm}_{1,2}}\Theta_{1,2}=0
\end{equation}
which determines $\int_{\overline{\modm}_{2,1}}\Theta_{2}\cdot\psi_1^2$ from $\int_{\overline{\modm}_{1,1}}\Theta_{1,1}$ and $\int_{\overline{\modm}_{1,2}}\Theta_{1,2}$.  Similarly, Appendix~\ref{sec:calc} uses genus three relations to deduce $\int_{\overline{\modm}_{3,2}}\Theta_{3,2}\cdot\psi_1^2$ and $\int_{\overline{\modm}_{3,2}}\Theta_{3,2}\cdot\psi_1\psi_2$ from lower genus coefficients of $Z^{\Theta}(\hbar,t_0,t_1,...)$.  

The following theorem proves that the coefficients of $Z^{\text{BGW}}(\hbar,t_0,t_1,...)$ also satisfy \eqref{relg2}, and more generally an infinite set of relations satisfied by coefficients of $Z^{\Theta}(\hbar,t_0,t_1,...)$ arising from Pixton relations.  
\begin{theorem}   \label{infrel}
Pixton relations produce infinitely many non-trivial relations satisfied by the coefficients of both $Z^{\Theta}(\hbar,t_0,t_1,...)$ and $Z^{\text{BGW}}(\hbar,t_0,t_1,...)$.
\end{theorem}  
\begin{proof} 
For each $g>1$, $n$ and $\lfloor\frac{n+1}{2}\rfloor$ possible $A\in\{0,1\}^n$ (due to symmetry and vanishing of half for parity reasons), $R^{g-1}_{g,A}=0$ defines a non-trivial Pixton relation.  For each of these choices of $g$, $n$ and $A$, due to restriction and pull-back properties of $\Theta_{g,n}$ as explained above,  $\Theta_{g,n}\cdot R^{g-1}_{g,A}=0$ defines a relation between coefficients of $Z^\Theta(\hbar,\{t_k\})$, such as  \eqref{relg2}.

The main aim of the proof is to prove that the corresponding coefficients of $Z^{\text{BGW}}(\hbar,\{t_k\})$ also satisfy this infinite set of relations.  To do this, we study the partition function $Z^{\text{BGW}}_{\Omega^{A_2}}$, defined in Definition~\ref{BGWCohFTpart} via the spectral curve $S^{\text{BGW}}_{A_2}$ defined in \eqref{BGWA2}.  The relations between coefficients of $Z^{\text{BGW}}(\hbar,\{t_k\})$ will be stored in the spectral curve.    This will produce identical relations satisfied by both the coefficients of $Z^{\text{BGW}}$ and $Z^{\Theta}$.  To summarise, we have vanishing of certain coefficients of $Z^{\Theta}_{A_2}(\hbar,\{t^{\alpha}_k\})$ due to the cohomological viewpoint shown in the upper row in Figure~\ref{fig:cons}, and vanishing of corresponding coefficients of $Z^{\text{BGW}}_{\Omega^{A_2}}(\hbar,\{t^{\alpha}_k\})$ due to Givental's construction neatly encoded by topological recursion shown in the lower row in Figure~\ref{fig:cons}.

Pixton relations induce relations between intersection numbers of $\psi$ and $\kappa$ classes or $\psi$ classes alone, i.e. coefficients of $Z^{\text{KW}}(\hbar,\{t_k\})$.  These relations are realised by unexpected vanishing of  coefficients of the partition function $Z_{A_2}(\hbar,\{t^{\alpha}_k\})$.  Similarly, unexpected vanishing of coefficients of the partition function $Z^{\text{BGW}}_{A_2}(\hbar,\{t^{\alpha}_k\})$ correspond to relations between coefficients of $Z^{\text{BGW}}(\hbar,\{t_k\})$.

The coefficients of $\log Z^{\text{BGW}}_{A_2}(\hbar,\{t^{\alpha}_k\})$ are obtained from the correlators $\omega^{\text{BGW},A_2}_{g,n}$ of $S^{\text{BGW}}_{A_2}$ by
\begin{equation}  \label{insert}
\left.\frac{\partial^n}{\partial t^{\alpha_1}_{k_1}...\partial t^{\alpha_n}_{k_n}}(F^{\text{BGW}}_{A_2})_g(\{t^{\alpha}_k\})\right|_{t^{\alpha}_k=0}=\Res_{z_1=\infty}...\Res_{z_n=\infty}\prod_{i=1}^n p_{\alpha_i,k_i}(z_i)\omega^{\text{BGW},A_2}_{g,n}(z_1,...,z_n)
\end{equation}
for polynomials $p_{\alpha,k}(z)=\sqrt{-3}\frac{(-1)^{\alpha}}{\alpha}z^{3k+\alpha}+\text{lower order terms}$ for $\alpha\in\{1,2\}$ and $k\in\bn$ chosen so that the residues are dual to the differentials $\xi^{\alpha}_k$ defined in \eqref{flatdiff}.  The lower order terms (and the top coefficient) will not be important here because we will only consider vanishing of \eqref{insert} arising from high enough order vanishing of $\omega^{A_2}_{g,n}(z_1,...,z_n)$ at $z_i=\infty$ so that the integrand in \eqref{insert} is holomorphic at $z_i=\infty$.  Equation~\eqref{insert} is a special case of the more general phenomena, proven in \cite{DNOPSPri}, that periods of $\omega_{g,n}$ are dual to insertions of vectors in a CohFT.    Thus we have shown that {\em relations between coefficients of $Z^{\text{BGW}}(\hbar,\{t_k\})$ induced from Pixton relations are detected by high order vanishing of $\omega_{g,n}^{\text{BGW},A_2}(z_1,...,z_n)$ at $z_i=\infty$}.
The same is true for high order vanishing $\omega_{g,n}^{A_2}(z_1,...,z_n)$ at $z_i=\infty$ which is shown in the following calculation:
$$\omega^{A_2}_{2,1}(z) = \frac{35}{243}\frac{z(11z^4+14z^2+2)}{(z^2-1)^{10}}dz\quad\Rightarrow \Res_{z=\infty}z^m\omega^{A_2}_{2,1}(z)=0,\  m\in\{0,1,...,12\}
$$
Hence \eqref{insert} vanishes for $k_1=0,1,2,3$ and $\alpha_1\equiv k_1(\hspace{-2mm}\mod 2)$ which gives the following relations between intersection numbers, or coefficients of $Z^{\text{KW}}(\hbar,\{t_k\})$,
\begin{equation}  \label{pixrelTR}
\int_{\overline{\modm}_{2,1}}R^{d}_{2,\bar{d}}\psi_1^{4-d}=0,\quad d=1,2,3,4
\end{equation}
where $R^{d}_{2,\bar{d}}$ is a non-trivial Pixton relation, for $\bar{d}\equiv d(\hspace{-2mm}\mod 2)$, between cohomology classes in $H^{2d}(\overline{\modm}_{2,1})$ proven in \cite{PPZRel}, such as $R^{2}_{2,0}=\psi_1^2+$ boundary terms $=0$.  
\begin{lemma}  \label{vanA2BGW}
$$\sum_{i=1}^n\text{ord}_{z_i=\infty\ }\omega^{{\text{BGW}},A_2}_{g,n}(z_1,...,z_n)\geq 2g-2
$$
where $\text{ord}_{z=\infty\ }\eta(z)$ is the order of vanishing of the differential at $z=\infty$.
\end{lemma}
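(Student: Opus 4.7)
The plan is to prove the bound by induction on $2g-2+n$, working directly with the topological recursion formula applied to the spectral curve $S^{\text{BGW}}_{A_2}$. The base cases $(g,n)\in\{(0,3),(1,1)\}$ are immediate: the correlators have poles only at $z=\pm 1$, so they are holomorphic in each variable at $z_i=\infty$, giving $\sum_i \text{ord}_{z_i=\infty}\geq 0$, which matches $2g-2=0$ at $(1,1)$ and exceeds $2g-2=-2$ at $(0,3)$. The explicit formula for $\omega^{\text{BGW},A_2}_{1,1}$ in Example~\ref{specA2rel} confirms that the bound is tight at $(1,1)$.

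For the inductive step, I would expand the recursion kernel at $p_1=\infty$. The Cauchy kernel gives the elementary identity
\begin{equation*}
\int_{\hat p}^p B(p_1,\cdot)=dp_1\sum_{k\geq 0}\frac{p^{k+1}-\hat p^{k+1}}{p_1^{k+2}},
\end{equation*}
so the TR formula rewrites as $\omega^{\text{BGW},A_2}_{g,n}(p_1,p_L)=dp_1\sum_{k\geq 0}p_1^{-k-2}A_k(p_L)$ with
\begin{equation*}
A_k(p_L)=\sum_{\alpha=\pm 1}\Res_{p=\alpha}\frac{p^{k+1}-\hat p^{k+1}}{2(y-\hat y)\,dx}\,W(p,p_L),
\end{equation*}
where $W(p,p_L)$ is the usual sum of lower-genus correlator combinations. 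Since $dp_1/p_1^{k+2}$ has order $k$ at $p_1=\infty$, the order of $\omega_{g,n}$ in $p_1$ at infinity equals the smallest $k$ for which $A_k$ is not identically zero. The argument then proceeds by showing $A_k\equiv 0$ for $k$ below the claimed bound, exploiting the fact that $(p^{k+1}-\hat p^{k+1})/((y-\hat y)\,dx)$ vanishes to order one in $p$ at each $\alpha=\pm 1$ and combining this with the inductive bound on the orders of the correlators appearing inside $W$. By symmetry of $\omega_{g,n}$ the same analysis applies to each variable, and careful bookkeeping gives the desired inequality on the sum $\sum_i\text{ord}_{z_i=\infty}$.

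The main obstacle is precisely this bookkeeping: the inductive hypothesis controls only the sum of orders over all variables, while the TR formula distinguishes $p_1$, so the argument must carefully track how the summed bound propagates through both residue contributions at $\pm 1$, through the coincident arguments $p,\hat p\to\alpha$ in $\omega_{g-1,n+1}(p,\hat p,p_L)$, and through all splittings $(g_1,I)+(g_2,J)=(g,L)$. An alternative and perhaps more transparent route uses the identification $Z^{S^{\text{BGW}}_{A_2}}=Z^{\text{BGW}}_{A_2}$ of Example~\ref{specA2rel} together with the TR--Givental duality to write $\omega^{\text{BGW},A_2}_{g,n}=\sum G_g(\vec\alpha,\vec k)\prod_i\xi^{\alpha_i}_{k_i}(z_i)$. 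A direct computation using $\xi^\alpha_{k+1}=d(\xi^\alpha_k/dx)$ with $dx=3(z^2-1)\,dz$ yields $\text{ord}_{z=\infty}\xi^\alpha_k=3k+\alpha-1$, reducing the lemma to the degree bound $\sum_i(3k_i+\alpha_i-1)\geq 2g-2$ whenever $G_g(\vec\alpha,\vec k)\neq 0$; this bound is then extracted from the Givental graphical expansion of $Z^{\text{BGW}}_{A_2}$ via the BGW vertex constraint $\sum_j k_v^j=g_v-1$ together with the index structure of the $\hat R$ and $\hat T_0(z)=\un-R^{-1}(z)\un$ actions coming from the $A_2$ Frobenius manifold.
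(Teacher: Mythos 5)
Neither of your two routes closes the argument, and the second contains a step that is actually invalid. Consider the reduction in your ``alternative route'': writing $\omega^{{\rm BGW},A_2}_{g,n}=\sum_T G_g(\vec\alpha,\vec k)\prod_i\xi^{\alpha_i}_{k_i}(z_i)$ and proving $\sum_i(3k_i+\alpha_i-1)\geq 2g-2$ for every nonvanishing coefficient does \emph{not} imply the lemma. The quantity $\sum_i\mathrm{ord}_{z_i=\infty}\,\omega_{g,n}$ is bounded below by $\sum_i\min_T\mathrm{ord}_{z_i=\infty}\xi^{\alpha_i(T)}_{k_i(T)}$, whereas your per-term bound controls $\min_T\sum_i\mathrm{ord}_{z_i=\infty}\xi^{\alpha_i(T)}_{k_i(T)}$, and $\sum_i\min_T\leq\min_T\sum_i$: for instance $\xi^1_1(z_1)\xi^1_0(z_2)+\xi^1_0(z_1)\xi^1_1(z_2)$ has every term of total order $3$ at infinity but summed order $0+0=0$ as a bidifferential. (The per-term statement would suffice for the Corollary that follows the lemma, but it does not prove the lemma as stated.) Moreover you do not actually establish the per-term bound: the BGW vertex constraint $\sum_j k^j_v=g_v-1$ in the Givental graphical expansion, propagated through $\hat R$ and $\hat T_0$, naturally yields an \emph{upper} bound on the external indices $k_i$, not the lower bound on $\sum_i(3k_i+\alpha_i-1)$ that you need. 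Your first route fares no better: you concede the bookkeeping is the obstacle, and the obstacle is structural. The inductive hypothesis concerns behaviour of the lower correlators at $z=\infty$, but in the topological recursion formula those correlators are only ever evaluated at $p,\hat p$ near the ramification points $z=\pm1$; the sole dependence on $p_1$ is through the kernel, whose expansion gives order $\geq 0$ at $p_1=\infty$, i.e.\ mere holomorphy. The improvement to $2g-2$ requires the identical vanishing of the coefficients $A_k$ for small $k$, which is a global statement about the residues and does not follow from the order-one vanishing of $(p^{k+1}-\hat p^{k+1})/((y-\hat y)\,dx)$ at $\pm1$ (which holds for every $k$ and only guarantees finiteness).

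The paper's proof avoids all of this with a scaling argument. One deforms the spectral curve to $x=z^3-3Q^2z$, $y=\sqrt{-3}/x'(z)$, so that $ydx$ is homogeneous of degree $1$ in $(z,Q)$ and hence $\omega_{g,n}(Q,z_1,\dots,z_n)$ is homogeneous of degree $2-2g-n$. Writing $\omega_{g,n}=p_{g,n}(Q,\vec z)\prod_i(z_i^2-Q^2)^{-2g}dz_i$, homogeneity fixes $\deg p_{g,n}=4gn-2n+2-2g$, and the regularity of the $Q\to0$ limit (topological recursion for $x=z^3$ in the Bouchard--Eynard sense) forces $\deg_{\vec z}\,p_{g,n}\leq 4gn-2n+2-2g$; converting degrees to orders of vanishing at $z_i=\infty$ (each $dz_i$ costing $2$) gives the bound $2g-2$ directly, with no induction and no expansion in the $\xi^\alpha_k$. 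If you want to salvage your approach, the honest target is the per-term/per-monomial degree bound, which is what the Corollary actually consumes; but to prove the lemma as stated you need control of the total degree of the numerator polynomial, and the homogeneity-plus-regular-limit mechanism is the missing idea.
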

\begin{proof}
We can make the rational differential
$$\omega^{A_2}_{g,n}(z_1,...,z_n)=\frac{p_{g,n}(z_1,....,z_n)}{\prod_{i=1}^n(z_i^2-1)^{2g}}dz_1...dz_n
$$
homogeneous by applying topological recursion to $x(z)=z^3-3Q^2z$ and $y=\sqrt{-3}/x'(z)$ which are homogeneous in $z$ and $Q$.  Then $\omega^{A_2}_{g,n}(Q,z_1,...,z_n)$ is homogeneous in $z$ and $Q$ of degree $2-2g-n$:
$$\omega^{A_2}_{g,n}(Q,z_1,...,z_n)=\lambda^{2-2g-n}\omega^{A_2}_{g,n}(\lambda Q,\lambda z_1,...,\lambda z_n).
$$
The degree of homogeneity uses the fact that $(z,Q)\mapsto(\lambda z,\lambda Q)$ $\Rightarrow$ $ydx\mapsto\lambda ydx$ $\Rightarrow$ $\omega_{g,n}\mapsto\lambda^{2-2g-n}\omega_{g,n}$
because $ydx$ appears in the kernel $K(p_1,p)$ with homogeneous degree $-1$ which easily leads to degree $2-2g-n$ for $\omega_{g,n}$.  The degree $2-2g-n$ homogeneity of
$$\omega^{A_2}_{g,n}(Q,z_1,...,z_n)=\frac{p_{g,n}(Q,z_1,....,z_n)}{\prod_{i=1}^n(z_i^2-Q^2)^{2g}}dz_1...dz_n
$$
implies that $\deg p_{g,n}(Q,z_1,....,z_n)=4gn-n+2-2g-n$.  But we also know that $\omega^{A_2}_{g,n}(Q,z_1,...,z_n)$ is well-defined as $Q\to 0$---the limit becomes $\omega_{g,n}$ of the spectral curve $x(z)=z^3$ and $y=\sqrt{-3}/x'(z)$ using the topological recursion defined by Bouchard and Eynard \cite{BEyThi}---so $\deg p_{g,n}(z_1,....,z_n)\leq 4gn-n+2-2g-n$.  Note that $dz_i$ is homogeneous of degree 1 but has a pole of order 2 at $z_i=\infty$, hence
$$\sum_{i=1}^n\text{ord}_{z_i=\infty\ }\omega^{{\text{BGW}},A_2}_{g,n}(z_1,...,z_n)=4gn-\deg p_{g,n}(z_1,....,z_n)-2n\geq 2g-2.
$$
\end{proof}

Primary invariants of a partition function are those coefficients of $\prod_{i=1}^nt^{\alpha_i}_{k_i}$ with all $k_i=0$.  They correspond to intersections in $\overline{\modm}_{g,n}$ with no $\psi$ classes.  The primary invariants of $Z^{\Theta}_{A_2}(\hbar,\{t^{\alpha}_k\})$ vanish for $n<2g-2$.  This uses $\deg\Omega^{A_2}_{g,n}\leq\frac13(g-1+n)$ so $\deg\Omega^{A_2}_{g,n}\cdot\Theta_{g,n}\leq\frac13(g-1+n)+2g-2+n<3g-3+n$ when $n<2g-2$.  These vanishing coefficients correspond to the relations $\Theta_{g,n}\cdot R^{g-1}_{g,A}=0$ which, as discussed above, give relations between coefficients of $Z^\Theta(\hbar,\{t_k\})$.

The primary coefficients of $Z^{\text{BGW}}_{A_2}(\hbar,\{t^{\alpha}_k\})$  correspond to $$ \Res_{z_1=\infty}...\Res_{z_n=\infty}\prod_{i=1}^nz_i^{\epsilon_i}\omega^{A_2}_{g,n}(z_1,...,z_n)$$ for $\epsilon_i=1$ or 2.  Different choices of $\epsilon_i$ give different relations (except half which vanish for parity reasons).  By Lemma~\ref{vanA2BGW} the sums of the orders of vanishing of $\omega^{{\text{BGW}},A_2}_{g,n}(z_1,...,z_n)$ at $z_i=\infty$ is bounded below by $2g-2$.   For $n< 2g-2$, since $\sum_{i=1}^n\text{ord}_{z_i=\infty\ }\omega^{{\text{BGW}},A_2}_{g,n}(z_1,...,z_n)\geq 2g-2$ by Lemma~\ref{vanA2BGW}, there exists an $i$ such that $\text{ord}_{z_i=\infty\ }\omega^{{\text{BGW}},A_2}_{g,n}(z_1,...,z_n)\geq 2$.  Hence $z_i^{\epsilon_i}\omega^{A_2}_{g,n}(z_1,...,z_n)$ is holomorphic at $z_i=\infty$, so 
$$\displaystyle\Res_{z_i=\infty}z_i^{\epsilon_i}\omega^{A_2}_{g,n}(z_1,...,z_n)=0$$ 
and we have
\begin{equation}  \label{ninputs}
n< 2g-2\Rightarrow \Res_{z_1=\infty}...\Res_{z_n=\infty}\prod_{i=1}^nz_i^{\epsilon_i}\omega^{A_2}_{g,n}(z_1,...,z_n)=0.
\end{equation}
Hence the primary coefficients of $Z^{\text{BGW}}_{A_2}(\hbar,\{t^{\alpha}_k\})$ vanish for $n< 2g-2$ yielding a common set of relations satisfied by both the coefficients of $Z^{\Theta}(\hbar,t_0,t_1,...)$ and $Z^{\text{BGW}}(\hbar,t_0,t_1,...)$.  
\end{proof}
An example, of a genus 2 relation produced by Theorem~\ref{infrel}:
$$\quad\omega^{{\text{BGW}},A_2}_{2,1}(z)=\frac{-5z^2-1}{16\sqrt{-3}(z-1)^4(z+1)^4}dz.
$$
It immediately follows that $\displaystyle\Res_{z=\infty}\frac{\sqrt{-3}}{2}z\cdot\omega_{2,1}(z)=0$ which signifies a relation between coefficients of $Z^{\text{BGW}}(\hbar,t_0,t_1,...)$.  We will write the relations using $\Theta_{g,n}$ however the relations are between coefficients of $Z^{\text{BGW}}(\hbar,t_0,t_1,...)$ and what we are showing here is that these coefficients satisfy the same relations as intersection numbers involving $\Theta_{g,n}$, or equivalently coefficients of $Z^{\Theta}(\hbar,t_0,t_1,...)$.  The graphical expansion encoded by both Givental's construction and toplogical recursion is  given by:
\begin{center}
\begin{tikzpicture}
\node(0) at (-5,0)   {}    ;
\node(1) at (-4,0)[shape=circle,draw]        {2};
\path [-] (0)      edge   node   {}     (1);
\node(5) at (-2.5,0)   {}    ;
\node(6) at (-1.5,0)[shape=circle,draw]        {2};
\node(7) at (-.5,.5)   {};
\tikzset{decoration={snake,amplitude=.4mm,segment length=2mm,
                       post length=0mm,pre length=0mm}}
  \draw[decorate] (6) -- (7);
\node(2) at (6,0)(text) {};
\path [-] (5)      edge   node   {}     (6);

\node(3) at (1,0)[shape=circle,draw]        {1};
\node(4) at (3,0)[shape=circle,draw]        {1};
\path [-] (3)      edge   node [above]  {}     (4);
\node(10) at (0,0)        {};
\path [-] (10)      edge   node   {}         (3);
\circledarrow{}{text}{.8cm};
\node(8) at (5.25,0)[shape=circle,draw]        {1};
\node(11) at (4.25,0)        {};
\path [-] (11)      edge   node   {}         (8);
\end{tikzpicture}
\end{center}
(plus graphs containing genus 0 vertices on which $\Theta_{2,1}$ vanishes)	  
which contributes 
\begin{align*}
2^2\cdot\frac{60}{1728}\cdot&\int_{\overline{\modm}_{2,1}}\Theta_{2,1}\cdot\psi_1+2^2\cdot\frac{-60}{1728}\cdot\int_{\overline{\modm}_{2,1}}\Theta_{2,1}\cdot\kappa_1\\
+&2^2\cdot\frac{84}{1728}\cdot\int_{\overline{\modm}_{1,2}}\Theta_{1,2}\cdot\int_{\overline{\modm}_{1,1}}\Theta_{1,1}+\frac{2}{2}\cdot\frac{84-60}{1728}\cdot\int_{\overline{\modm}_{1,3}}\Theta_{1,3}
\end{align*}
which agrees with the expansion in weighted graphs of $\displaystyle\Res_{z=\infty}\frac{\sqrt{-3}}{2}z\cdot\omega_{2,1}(z)=0$ given by
$$\frac{5}{1536}-\frac{15}{1536}+\frac{7}{2304}+\frac{1}{288}=0. 
$$

\appendix

\section{Calculations}  \label{sec:calc}
Here we show explicitly the equality $Z^{\text{BGW}}=Z^{\Theta}$ up to genus 3.  The coefficients of the Br\'ezin-Gross-Witten tau function are calculated recursively since it is a tau function of the KdV hierarchy.  It has low genus  $g$ (= coefficient of $\hbar^{g-1}$) terms given by:
\begin{align*} 
\log Z^{\text{BGW}}=&-\frac{1}{8}\log(1-t_0)+\hbar\frac{3}{128}\frac{t_1}{(1-t_0)^3}+\hbar^2\frac{15}{1024}\frac{t_2}{(1-t_0)^5}\\
&+\hbar^2\frac{63}{1024}\frac{t_1^2}{(1-t_0)^6}+O(\hbar^3)\\
=\frac{1}{8}t_0+&\frac{1}{16}t_0^2+..+\hbar(\frac{3}{128}t_1+\frac{9}{128}t_0t_1+..)+\hbar^2(\frac{15}{1024}t_2+\frac{63}{1024}t_1^2+..)
\end{align*}
The intersection numbers of $\Theta_{g,n}$ stored in
$$\log Z^{\Theta}(\hbar,t_0,t_1,...)=\sum_{g,n,\vec{k}}\frac{\hbar^{g-1}}{n!}\int_{\overline{\modm}_{g,n}}\Theta_{g,n}\cdot\prod_{j=1}^n\psi_j^{k_j}\prod t_{k_j}$$
are calculated recursively via relations among tautological classes in $H^*(\overline{\modm}_{g,n},\bq)$.  The calculation of these intersection numbers up to genus 2 can be found throughout the text.  We assemble them here for convenience, then present the genus 3 calculations.
\begin{itemize}
\item[$\fbox{g\ =\ 0}$]  Theorem~\ref{main} property \eqref{genus0} gives $\Theta_{0,n}=0$ which agrees with the vanishing of all genus 0 terms in $Z^{\text{BGW}}$. 
\item[$\fbox{g\ =\ 1}$]  Proposition~\ref{theta11} gives $\Theta_{1,1}=3\psi_1$ hence $\int_{\overline{\modm}_{1,1}}\Theta_{1,1}=\frac{1}{8}$.  We use this together with the dilaton equation to get  $\int_{\overline{\modm}_{1,n}}\Theta_{1,n}=\frac{(n-1)!}{8}$.  This agrees with $-\frac{1}{8}\log(1-t_0)$ in $\log Z^{\text{BGW}}$.
\item[$\fbox{g\ =\ 2}$]    Using Mumford's relation \cite{MumTow} $\kappa_1=$ sum of boundary terms in $\overline{\modm}_{2}$  which coincides with a genus 2 Pixton relation, Example~\ref{gen2rel} produced the genus 2 intersection numbers from the genus 1 intersection numbers.
\begin{align*}  \int_{\overline{\modm}_{2}}\Theta_{2}\cdot\kappa_1&=\frac{7}{5}\cdot\int_{\overline{\modm}_{1,1}}\Theta_{1,1}\cdot\int_{\overline{\modm}_{1,1}}\Theta_{1,1}\cdot\frac{1}{|\text{Aut}(\Gamma_1)|}+\frac{1}{5}\cdot\int_{\overline{\modm}_{1,2}}\Theta_{1,2}\cdot\frac{1}{|\text{Aut}(\Gamma_2)|}\\&=\frac{7}{5}\cdot\frac{1}{8}\cdot\frac{1}{8}\cdot\frac{1}{2}+\frac{1}{5}\cdot\frac{1}{8}\cdot\frac{1}{2}=\frac{3}{128}.
\end{align*} 
Note that $\int_{\overline{\modm}_{2,1}}\Theta_{2,1}\cdot\psi_1=\int_{\overline{\modm}_{2,1}}\pi^*\Theta_{2}\cdot\psi_1^2=\int_{\overline{\modm}_{2}}\Theta_{2}\cdot\kappa_1$.   Using the dilaton equation we then get  $\int_{\overline{\modm}_{2,n}}\Theta_{2,n}\cdot\psi_1=\frac{3(n+1)!}{256}$ which agrees with the $\hbar\frac{3}{128}\frac{t_1}{(1-t_0)^3}$ term in $\log Z^{\text{BGW}}$.

\item[$\fbox{g\ =\ 3}$]   There are two independent genus 3 Pixton relations expressing $\kappa_2$ and $\kappa_1^2$ as sums of boundary terms in $\overline{\modm}_{3}$.  The relations correspond to sums over stable graphs in $\overline{\modm}_{3}$ hence they contain many terms.  In place of these, we use the equivalent relations discovered earlier in \cite{KLiGen,KLiTop} which push forward to relations in $\overline{\modm}_{3}$.  In $\overline{\modm}_{3,1}$ there is a relation $\psi_1^3=$ sum of boundary terms, which yields 
\begin{align*} \int_{\overline{\modm}_{3,1}}&\Theta_{3,1}\cdot\psi_1^2=\int_{\overline{\modm}_{3,1}}\pi^*\Theta_3\cdot\psi_1^3\\
=&\frac{41}{21}\cdot\int_{\overline{\modm}_{2,1}}\Theta_{2,1}\cdot\psi_1\cdot\int_{\overline{\modm}_{1,1}}\Theta_{1,1}+\frac{5}{42}\cdot\int_{\overline{\modm}_{2,2}}\Theta_{2,2}\cdot\psi_1\\
&-\frac{1}{105}\cdot\int_{\overline{\modm}_{1,1}}\Theta_{1,1}\cdot\int_{\overline{\modm}_{1,3}}\Theta_{1,3}\cdot\frac{1}{|\text{Aut}|}+\frac{11}{70}\cdot\int_{\overline{\modm}_{1,2}}\Theta_{1,2}\cdot\int_{\overline{\modm}_{1,2}}\Theta_{1,2}\cdot\frac{1}{|\text{Aut}|}\\
&-\frac{4}{35}\cdot\int_{\overline{\modm}_{1,1}}\Theta_{1,1}\cdot\int_{\overline{\modm}_{1,2}}\Theta_{1,2}\cdot\int_{\overline{\modm}_{1,1}}\Theta_{1,1}-\frac{1}{105}\cdot\int_{\overline{\modm}_{1,1}}\Theta_{1,1}\cdot\hspace{-1mm}\int_{\overline{\modm}_{1,3}}\Theta_{1,3}\cdot\frac{1}{|\text{Aut}|}\\
&-\frac{1}{1260}\cdot\int_{\overline{\modm}_{1,4}}\Theta_{1,4}\cdot\frac{1}{|\text{Aut}|}
\\
=&\frac{41}{21}\cdot\frac{3}{128}\cdot\frac{1}{8}+\frac{5}{42}\cdot\frac{9}{128}-\frac{1}{105}\cdot\frac{1}{8}\cdot\frac{2}{8}\cdot\frac{1}{2}+\frac{11}{70}\cdot\frac{1}{8}\cdot\frac{1}{8}\cdot\frac{1}{2}
-\frac{4}{35}\cdot\frac{1}{8}\cdot\frac{1}{8}\cdot\frac{1}{8}\\
&-\frac{1}{105}\cdot\frac{1}{8}\cdot\frac{2}{8}\cdot\frac{1}{2}-\frac{1}{1260}\cdot\frac{6}{8}\cdot\frac{1}{4}\\=&\frac{15}{1024}
\end{align*} 
In $\overline{\modm}_{3,2}$, there is a relation $\psi_1^2\psi_2-\psi_1\psi_2^2=$ sum of boundary terms, which yields
\begin{align*} 7\int_{\overline{\modm}_{3,2}}&\Theta_{3,2}\cdot(\psi_1^2-\psi_1\psi_2)=
7\int_{\overline{\modm}_{3,2}}\pi^*\Theta_{3,1}\cdot(\psi_1^2\psi_2-\psi_1\psi_2^2)\\
=&-\frac{16}{3}\cdot\int_{\overline{\modm}_{2,2}}\Theta_{2,2}\cdot\psi_2\cdot\int_{\overline{\modm}_{1,1}}\Theta_{1,1}-5\int_{\overline{\modm}_{2,2}}\Theta_{2,2}\cdot\psi_1\cdot\int_{\overline{\modm}_{1,1}}\Theta_{1,1}\\
-&\frac{40}{3}\cdot\int_{\overline{\modm}_{2,1}}\Theta_{2,1}\cdot\psi_1\cdot\int_{\overline{\modm}_{1,2}}\Theta_{1,2}
-\frac{1}{6}\cdot\int_{\overline{\modm}_{2,3}}\Theta_{2,3}\cdot\psi_1-\int_{\overline{\modm}_{2,3}}\Theta_{2,3}\cdot\psi_1\cdot\frac{1}{|\text{Aut}|}\\
&-\frac{1}{15}\cdot\int_{\overline{\modm}_{1,1}}\Theta_{1,1}\cdot\int_{\overline{\modm}_{1,4}}\Theta_{1,4}\cdot\frac{1}{|\text{Aut}|}-\frac{9}{10}\cdot\int_{\overline{\modm}_{1,3}}\Theta_{1,3}\cdot\int_{\overline{\modm}_{1,2}}\Theta_{1,2}\\
&-\frac{1}{15}\cdot\int_{\overline{\modm}_{1,1}}\Theta_{1,1}\cdot\int_{\overline{\modm}_{1,4}}\Theta_{1,4}\cdot\frac{1}{|\text{Aut}|}+\frac{4}{15}\cdot\int_{\overline{\modm}_{1,2}}\Theta_{1,2}\cdot\int_{\overline{\modm}_{1,3}}\Theta_{1,3}\cdot\frac{1}{|\text{Aut}|}\\
&-\frac{4}{5}\cdot\int_{\overline{\modm}_{1,1}}\Theta_{1,1}\cdot\int_{\overline{\modm}_{1,3}}\Theta_{1,3}\cdot\int_{\overline{\modm}_{1,1}}\Theta_{1,1}\\
&+\frac{16}{5}\cdot\int_{\overline{\modm}_{1,1}}\Theta_{1,1}\cdot\int_{\overline{\modm}_{1,2}}\Theta_{1,2}\cdot\int_{\overline{\modm}_{1,2}}\Theta_{1,2}-\frac{1}{180}\cdot\int_{\overline{\modm}_{1,5}}\Theta_{1,5}\cdot\frac{1}{|\text{Aut}|}\\
=&-\frac{16}{3}\cdot\frac{9}{128}\cdot\frac{1}{8}-5\frac{9}{128}\cdot\frac{1}{8}-\frac{40}{3}\cdot\frac{3}{128}\cdot\frac{1}{8}
-\frac{1}{6}\cdot\frac{36}{128}-\frac{36}{128}\cdot\frac{1}{2}\\
&-\frac{1}{15}\cdot\frac{1}{8}\cdot\frac{6}{8}\cdot\frac{1}{2}-\frac{9}{10}\cdot\frac{2}{8}\cdot\frac{1}{8}-\frac{1}{15}\cdot\frac{1}{8}\cdot\frac{6}{8}\cdot\frac{1}{2}+\frac{4}{15}\cdot\frac{1}{8}\cdot\frac{2}{8}\cdot\frac{1}{2}\\
&-\frac{4}{5}\cdot\frac{1}{8}\cdot\frac{2}{8}\cdot\frac{1}{8}+\frac{16}{5}\cdot\frac{1}{8}\cdot\frac{1}{8}\cdot\frac{1}{8}
-\frac{1}{180}\cdot\frac{24}{8}\cdot\frac{1}{4}\\=&-\frac{357}{1024}
\end{align*} 
Hence
$$ \int_{\overline{\modm}_{3,2}}\Theta_{3,2}\cdot\psi_1\psi_2=\int_{\overline{\modm}_{3,2}}\Theta_{3,2}\cdot\psi_1^2+\frac{1}{7}\frac{357}{1024}=\frac{75}{1024}+\frac{51}{1024}=\frac{63}{512}
$$
where $\int_{\overline{\modm}_{3,2}}\Theta_{3,2}\cdot\psi_1^2=\frac{75}{1024}$ is obtained from $\int_{\overline{\modm}_{3,1}}\Theta_{3,1}\cdot\psi_1^2=\frac{15}{1024}$ via the dilaton equation.  The dilaton equation then yields $\int_{\overline{\modm}_{3,n}}\Theta_{3,n}\cdot\psi_1^2=\frac{75}{1024}\frac{(n+3)!}{5!}$ and  $ \int_{\overline{\modm}_{3,n}}\Theta_{3,n}\cdot\psi_1\psi_2=\frac{63}{512}\frac{(n+3)!}{5!}$ which agree with the terms $\hbar^2\frac{15}{1024}\frac{t_2}{(1-t_0)^5}$+$\hbar^2\frac{63}{1024}\frac{t_1^2}{(1-t_0)^6}$ in $\log Z^{\text{BGW}}$.
\end{itemize}

\end{document}